\documentclass[12pt]{amsart}

\usepackage{xcolor}

\title{Linear response for Bernoulli Convolutions}
\author{Jianning Fu}
\date{\today}
\keywords{Dimension of Bernoulli Convolution; Linear Response; Fourier Transform}
\subjclass[2020]{Primary 37C40; Secondary 11K55, 42A85}

\usepackage[letterpaper,margin=1in]{geometry}
\usepackage{xcolor}
\usepackage{graphicx}
\usepackage{amsmath}
\usepackage{amsthm}
\usepackage{amsfonts}
\usepackage{mathrsfs}
\usepackage{amssymb} 
\usepackage{fancyhdr}
\usepackage{enumerate}
\usepackage{tgschola} 
\usepackage
{biblatex}
\renewbibmacro{in:}{}
\AtEveryBibitem{%
  \clearfield{number}%
  \clearfield{doi}%
  \clearfield{url}%
  \clearfield{isbn}%
  \clearfield{issn}%
  \clearfield{note}%
  \clearlist{publisher}%
  \clearlist{location}%
}
\usepackage[T1]{fontenc}
\usepackage[utf8]{inputenc}
\usepackage{xspace}

\newtheorem{thm}{Theorem}[section]

\newtheorem{df}{Definition}[section]

\newtheorem{prop}{Proposition}[section]
\newtheorem{lm}[prop]{Lemma}

\newtheorem{que}{Question}[section]

\numberwithin{equation}{section}

\def\DS{\displaystyle}

\def\red{\color{red}}

\usepackage[
colorlinks,
breaklinks,
unicode
]{hyperref}

\begin{document}
\maketitle	
\begin{abstract}
 Let $\mu_{\lambda}$ be the Bernoulli convolution measure with parameter $\lambda\in(0,1)$. We study the regularity of 
 the function
 $h=h_{\phi}:\lambda\mapsto \int_{\mathbb{R}}\phi(x)\,d\mu_{\lambda}(x)$ 
 for H\"older observables $\phi$. We describe sufficient conditions for both smoothness and non smoothness of this function.
 In particular, we  show that for almost every function with respect to certain Wiener like measures on $C[0,1]$,
 $h_\phi$ exhibits a phase transition: it is almost nowhere differentiable for small
 $\lambda$ and it is almost everywhere differentiable for large $\lambda.$
 
\end{abstract}

	\bigskip

\section{Introduction}\label{Section: Introduction}
 Consider the random variable
$$X(\lambda) := \sum_{m=1}^\infty a_m \lambda^{m-1}, \lambda\in(0,1)$$
where $a_1,a_2,\dots$ are i.i.d random variables taking values in $\{-1,1\}$ with probability $(\frac{1}{2},\frac{1}{2})$. Denote by $\mu_\lambda$ the law of $X(\lambda)$. It is the infinite convolution product of $\DS \frac{1}{2}(\delta_{-\lambda^n}+\lambda_{\lambda^n})$, and is hence termed (unbiased)
{\it infinite Bernoulli Convolution.}  The study of Bernoulli Convolution has been an active research  area since the $1930$'s,  and numerous results have been established regarding its dimension, singularity vs. absolute continuity, $L^q$-density, multifractal spectrum, and many other aspects. Some classical works include \cite{erdos1939family} \cite{feng2023dimension} \cite{hochman2014self}  \cite{solomyak1995random}  \cite{simon2002dimension}  \cite{shmerkin2016absolute}  \cite{feng2012multifractal}, but this list is far from comprehensive.

In the present work we study  weak smoothness of the map $\lambda\mapsto \mu_{\lambda}$, which we call 
{\it the linear response problem} for Bernoulli convolution. Let $\phi(x):\mathbb{R}\to \mathbb{R}$ be a measurable function. Define 
\begin{equation}
\label{Average}
h(\lambda) = h_\phi (\lambda) := \int\phi(x)\,d\mu_{\lambda}(x),
\end{equation}
whenever the integral is well-defined and finite. We would like to study how the smoothness of $h$ depends on the H\"older regularity of $\phi$.  

The linear response theory originated in Ruelle's work and is motivated by applications to homogenization and mathematical physics, see \cite{ruelle1997differentiation, BDL00, 
BLRB00, dolgopyat05, CD06, CD09, grigo19}.
Currently,there is a well-developed linear response theory for many smooth  or piecewise smooth dynamical systems, see 
 \cite{ruelle1997differentiation, KL99,
dolgopyat2004differentiability, gouezel2006banach, baladi2008linear, baladi2007anisotropic, ruelle08, ruelle09, ruelle12, BT16, Zhang18, BRS20, SR21, Klo22, SVHV, BG24, AB-BC}
and the references therein. We also refer the readers to 
\cite{baladielse, BBS15, dLS18} for examples of failure of linear response.

Most papers above discuss the smoothness of averages such as \eqref{Average} for smooth observables $\phi.$ On the other hand in applications one often has to deal dynamically defined observables
such as the directions of the invariant splitting, and those observables are only H\"older in general, \cite{HK90, HW99, HNW02, XuZhang}. This motivates the study of regularity of
\eqref{Average} for H\"older functions.

Another reason for us to study H\"older observables is that if $\phi$ is smooth, trivially \eqref{Average} is smooth. Indeed, $X(\lambda)$ is an analytic function in $\lambda$ and the image of $X(\lambda)$ is uniformly bounded on any compact subset of $[0, 1)$. So, if $\phi$ is differentiable, then $h(\lambda)$ is automatically differentiable. We will discuss this in more detail in \S \ref{Subsection: Proof of Main Proposition}. It is therefore natural to consider observables with worse regularity like H\"older functions, which turns out to be more subtle. In this paper we present several results demonstrating both regularity of \eqref{Average} for some values of the parameters $\lambda$ and the lack thereof for other parameters. In particular, our results, shows, the phase transition: as $\lambda$ increases the averages seem to become more regular, compare Theorems \ref{Main Theorem: Differentiablility of Bernoulli Convolution with Holder Test Function} and 
\ref{Theorem: Almost Everywhere Non-differentiability} below. \\

Next, we provide precise formulations of our results.

\begin{thm}\label{Main Theorem: Differentiablility of Bernoulli Convolution with Holder Test Function}
    For any $\epsilon>0$, there exist $\gamma = \gamma(\epsilon)>0$ so that for any H\"older continuous test function $\phi$ with H\"older exponent at least $1-2\gamma$, the  average $h_\phi$ is absolutely continuous
    on the interval $(2^{-\frac{1}{3}}+\epsilon,1 -\epsilon)$. 
\end{thm}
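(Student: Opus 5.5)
Since $\phi$ is only H\"older, I will not differentiate $\phi$ directly. Instead I will use a Fourier/Sobolev argument in the parameter $\lambda$, in the spirit of Solomyak's transversality method. The goal is to show that $h_\phi$ lies in the Sobolev space $H^1$ (or $W^{1,1}$) of the interval $J=(2^{-1/3}+\epsilon,1-\epsilon)$; this implies absolute continuity. Fix a smooth bump $\chi$ supported slightly outside $J$. After multiplying $\phi$ by a cutoff we may take $\phi$ compactly supported, since the supports of $\mu_\lambda$ lie in $[-(1-\lambda)^{-1},(1-\lambda)^{-1}]$, which is uniformly bounded on $J$. Then write
\[
\chi(\lambda)h_\phi(\lambda)=\int \hat\phi(\xi)\,\chi(\lambda)\,\hat\mu_\lambda(\xi)\,d\xi ,
\qquad \hat\mu_\lambda(\xi)=\prod_{n\ge 0}\cos(\lambda^n\xi).
\]
The H\"older condition of exponent $\alpha=1-2\gamma$ is used only through an averaged Fourier bound, e.g.\ $\int_{|\xi|\sim 2^k}|\hat\phi(\xi)|^2d\xi\lesssim 2^{-2k\alpha}$ (Littlewood--Paley characterization of $C^\alpha\subset B^\alpha_{2,\infty}$ locally).

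The key estimate is an $L^2$-in-$\lambda$ decay bound for the derivative of the Fourier transform:
\[
\int \chi(\lambda)\,\Big|\tfrac{d}{d\lambda}\hat\mu_\lambda(\xi)\Big|^2 d\lambda \;\lesssim\; |\xi|^{-2\beta}\quad\text{or a dyadic-averaged version},
\]
with some $\beta$ such that $\alpha+\beta>1$ after accounting for the factor $|\xi|$ produced by the derivative. We have $\frac{d}{d\lambda}\hat\mu_\lambda(\xi)=\int ix X'(\lambda)e^{i\xi X}\,dP$. Squaring and averaging over $\lambda$ gives a double integral over pairs of sign sequences $(a,b)$ of oscillatory integrals
\[
\int \chi(\lambda)\,\Psi_{a,b}(\lambda)\,e^{i\xi(X_a(\lambda)-X_b(\lambda))}\,d\lambda .
\]
I will handle these by transversality. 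On $(2^{-1/3},1)$ I would not use transversality for the full series $\sum c_n\lambda^n$ with $c_n\in\{0,\pm1,\pm2\}$, since it fails near $1$. Instead I would use the standard trick of splitting $\mu_\lambda=\mu_{\lambda^3}^{(0)}*\mu_{\lambda^3}^{(1)}*\mu_{\lambda^3}^{(2)}$ by residues of $n$ mod $3$. That is, $\hat\mu_\lambda(\xi)=\prod_{j=0}^2\hat\nu_{\lambda^3}(\lambda^j\xi)$, where $\nu_\theta$ is the Bernoulli convolution with parameter $\theta=\lambda^3\in(1/2,1)$. The threshold $2^{-1/3}$ is exactly where $\lambda^3>1/2$; this is the regime where the Erd\H{o}s--Kahane/Solomyak-type power decay of $\hat\nu_\theta(\xi)$ for a.e./in-$L^2$ $\theta$ holds with a rate $|\xi|^{-\delta(\epsilon)}$. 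Taking the product of three such factors, together with Cauchy--Schwarz in $\lambda$, gives a decay rate roughly $|\xi|^{-3\delta/2}$ or better in $L^2(d\lambda)$. This is the small gain that the factor $|\xi|$ from differentiation has to be traded against.

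Concretely, the steps are the following.
\begin{enumerate}
\item Reduce to compactly supported $\phi$, and do a Littlewood--Paley decomposition $\phi=\sum_k\phi_k$, with $\|\phi_k\|_{L^2}\lesssim2^{-k\alpha}$.
\item For each $k$, bound $\|\partial_\lambda(\chi h_{\phi_k})\|_{L^1(d\lambda)}$ by
\[
\int_{|\xi|\sim2^k}|\hat\phi(\xi)|\Big(\int\chi\,|\partial_\lambda\hat\mu_\lambda(\xi)|\,d\lambda\Big)d\xi .
\]
Here $\partial_\lambda\hat\mu_\lambda(\xi)$ is a sum of terms $\xi\, n\lambda^{n-1}\sin(\lambda^n\xi)\prod_{m\ne n}\cos(\lambda^m\xi)$. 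Each term still contains the product of the other factors, which retains the mod-$3$ structure up to one missing factor. Hence it inherits an averaged decay $2^{-k\delta'}$, with a loss $n$ that is summable against $\lambda^{n-1}$.
\item Apply Cauchy--Schwarz in $\xi$ and use step 1. This gives $\|\partial_\lambda(\chi h_{\phi_k})\|_{L^1}\lesssim 2^{k(1-\alpha-\delta')}=2^{k(2\gamma-\delta')}$.
\item Choose $\gamma<\delta'/2$. The series converges, so $\chi h_\phi\in W^{1,1}$, which means absolute continuity on $J$.
\end{enumerate}

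The main obstacle is step 2. One needs a quantitative, parameter-averaged Fourier decay
\[
\int_J\Big|\prod_{m\neq n}\cos(\lambda^m\xi)\Big|\,d\lambda\lesssim n^{C}|\xi|^{-\delta'}
\]
that is uniform in the removed index $n$ and in $\lambda$ ranging over $(2^{-1/3}+\epsilon,1-\epsilon)$, with $\delta'=\delta'(\epsilon)>0$. I would prove this by a counting argument of Erd\H{o}s--Kahane type. Write $\lambda^m\xi$ modulo $\pi$ and count the $\lambda$ for which many consecutive factors are close to $\pm1$. Then bound the measure of such $\lambda$ using the transversality of the three interleaved $\lambda^3$-series, valid because $\lambda^3>1/2$ keeps the relevant power series in Solomyak's transversality class on compact subintervals. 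Losing one factor costs at most a constant, so uniformity in $n$ should follow.
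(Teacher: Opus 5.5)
Your framework matches the paper's: pass to the Fourier side, differentiate $\hat\mu_\lambda(\xi)=\prod_n\cos(\lambda^n\xi)$ in $\lambda$, control the product with one factor removed via the mod-$3$ splitting, and integrate in $\lambda$. The Littlewood--Paley reduction is a legitimate way to justify the Fourier representation, and it even avoids the paper's detour through a.e.\ $L^2$ densities. But Steps 2--3 fail: the key estimate you propose is of the wrong type, and it is too weak by roughly a full power of $|\xi|$.

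Take $A_k=\{|\xi|\sim 2^k\}$, $\int_{A_k}|\hat\phi|^2\lesssim 2^{-2k\alpha}$ and $F(\xi):=\int_J\prod_{m\neq n}|\cos(\lambda^m\xi)|\,d\lambda\lesssim|\xi|^{-\delta'}$. Cauchy--Schwarz then gives
\[
\int_{A_k}|\hat\phi(\xi)|\,|\xi|\,F(\xi)\,d\xi\le\|\hat\phi\|_{L^2(A_k)}\,\big\||\xi|F\big\|_{L^2(A_k)}\lesssim 2^{-k\alpha}\cdot 2^{k(1-\delta')}\cdot|A_k|^{1/2}=2^{k(\frac32-\alpha-\delta')},
\]
not $2^{k(1-\alpha-\delta')}$; you dropped the factor $|A_k|^{1/2}$. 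Using instead the pointwise bound $|\hat\phi(\xi)|\lesssim|\xi|^{-\alpha}$ gives $2^{k(2-\alpha-\delta')}$. Either way the series diverges for small $\delta'$.

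To close the argument you need $\int_{A_k}F\,d\xi\lesssim 2^{-k(1-\alpha)-k\eta}$ uniformly in $n$. That is, the $\lambda$-averaged product must decay faster than $|\xi|^{-1}$ on average, which no small Erd\H{o}s--Kahane exponent $\delta'$ can provide. A sanity check: Erd\H{o}s--Kahane-type averaged power decay does not see the threshold $\frac12$, since the counting argument works on any compact parameter interval in $(0,1)$. So if Steps 2--4 were valid, the same argument would give absolute continuity of $h_\phi$ on $(\delta,\frac12-\delta)$ for $\alpha$ close to $1$. That contradicts Theorem \ref{Theorem: Almost Everywhere Non-differentiability}.

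The missing idea is that $\lambda^3>\frac12$ is used for an $L^2$ (Sobolev) statement, not a power-decay statement.

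\begin{itemize}
\item The paper's input is Lemma \ref{Lemma: Finite Fractional Sobolev Norm} (Peres--Schlag--Solomyak): $\int_{1/2+\epsilon}^{1/\sqrt2}\int_{\mathbb R}|\hat\mu_\theta(\xi)|^2|\xi|^{2\gamma}\,d\xi\,d\theta<\infty$. This is propagated to $\theta=\lambda^3$ up to $(1-\epsilon)^3$ via $|\hat\mu_\theta|\le|\hat\mu_{\theta^n}|$ (Lemma \ref{Lemma: Convolution Identity}) and a change of variables.
\item The role of the mod-$3$ splitting is Lemma \ref{Lemma: Bound on Derivative}. Removing the single factor $\cos(\lambda^n\xi)$ breaks at most one of the three sub-products, and $|uv|\le u^2+v^2$ yields $\prod_{k\ne n}|\cos(\lambda^k\xi)|\le\sum_{j=0}^{2}|\hat\mu_{\lambda^3}(\lambda^j\xi)|^2$. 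A product of two $L^2$ objects is an $L^1$ object, which is exactly the integrability in $\xi$ your estimate lacks.
\item Together with $|\hat\phi(\xi)\xi|\lesssim|\xi|^{1-\alpha}\le|\xi|^{2\gamma}$ for $|\xi|\ge1$, this gives $\int_J\int_{\mathbb R}|\hat\phi(\xi)\,\partial_\lambda\hat\mu_\lambda(\xi)|\,d\xi\,d\lambda<\infty$ directly. The $n$-sum is controlled by $\sum_n n\lambda^{n-1}$.
\end{itemize}

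Your mention of transversality points the right way. It has to be used to prove this energy ($L^2$) bound for $\mu_{\lambda^3}$, not an Erd\H{o}s--Kahane counting bound for pointwise decay.
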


 The proof of Theorem \ref{Main Theorem: Differentiablility of Bernoulli Convolution with Holder Test Function} 
uses Fourier decay and it can be of independent interest in a more general setting.

To state our next result let $\dim \mu_{\lambda_0}$ stand for the (upper) Hausdorff dimension of $\mu_{\lambda_0}$: $\dim\mu_{\lambda_0} = \inf\{\dim_H (A): \mu_{\lambda_0}(A) = 1\}$, where $\dim_H$ denotes the Hausdorff dimension
(see Section \ref{Section: Counter Example due to Dimension Drop} for precise definition of the Hausdorff dimension).

\begin{thm}\label{Main Thorem: Counter Example due to Dimension Drop}
If $\lambda_0\in (0,1)$ is such that $\dim \mu_{\lambda_0}<1$, then for any $\theta\in(0,1)$, $h_\phi$ is not differentiable at $\lambda_0$ for generic $\phi\in C^\theta$ (the space of  H\"older continuous functions with H\"older exponent $\theta$). 
\end{thm}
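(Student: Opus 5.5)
We argue by a Baire category argument in $C^\theta$. For $N\in\mathbb{N}$ let
$$E_N=\Bigl\{\phi\in C^\theta:\ \bigl|h_\phi(\lambda)-h_\phi(\lambda_0)\bigr|\le N|\lambda-\lambda_0| \text{ for all }|\lambda-\lambda_0|\le 1/N\Bigr\}.$$
If $h_\phi$ is differentiable at $\lambda_0$, then $\phi\in E_N$ for some $N$. Since $\phi\mapsto h_\phi(\lambda)$ is continuous in the sup norm for each fixed $\lambda$, each $E_N$ is closed. It therefore suffices to show that each $E_N$ has empty interior. That is, given $\phi\in C^\theta$ and $\delta>0$, we must produce $\psi$ with $\|\psi\|_{C^\theta}<\delta$ and $\phi+\psi\notin E_N$. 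One caveat: smooth functions are not dense in $C^\theta$ under the H\"older norm. If that causes trouble, we take $\psi$ to be a small multiple of a sum of bumps, or we work in the closure of smooth functions, i.e. little H\"older space. Either way, linearity $h_{\phi+\psi}=h_\phi+h_\psi$ reduces matters to constructing $\psi$ whose quotient $|h_\psi(\lambda)-h_\psi(\lambda_0)|/|\lambda-\lambda_0|$ is huge at some small $|\lambda-\lambda_0|$. We will in fact make it larger than $2N$ plus the contribution of $\phi$.

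\textbf{Key mechanism: dimension drop gives concentration.} Set $d=\dim\mu_{\lambda_0}<1$ and fix $d<s<1$. Using the definition of dimension, for small $r$ we find a set $A$ with $\mu_{\lambda_0}(A)>1-\eta$ that is covered by about $r^{-s}$ intervals of length $r$. This uses exact dimensionality of Bernoulli convolutions (Feng–Hu) or an Egorov-type argument on local dimensions. Now couple $\mu_\lambda$ and $\mu_{\lambda_0}$ through the same coin sequence $(a_m)$. Since
$$X(\lambda)-X(\lambda_0)=\sum_m a_m(\lambda^{m-1}-\lambda_0^{m-1}),$$
this difference has size of order $|\lambda-\lambda_0|$ and, on a set of probability bounded below, is at least $c|\lambda-\lambda_0|$ in absolute value. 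Take $t=|\lambda-\lambda_0|$ and $r\ll t$. Let $\psi$ be a sum of bumps of height $t^\theta$, where each bump equals $t^\theta$ on the $r$-cover of $A$ and vanishes at distance about $ct$ from it.

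For this to work, the $r$-neighbourhoods of the covering intervals must be mostly avoided by the shifted points $X(\lambda)$. Since the $r$-cover has Lebesgue measure about $r^{1-s}\to0$, the key estimate is that the conditional law of $X(\lambda)$ given $X(\lambda_0)$ is spread out at scale $t$. We expect the shift to be spread enough that $\mu_\lambda$ assigns small mass to the cover. Then
$$h_\psi(\lambda_0)\gtrsim t^\theta \quad\text{while}\quad h_\psi(\lambda)\ll t^\theta.$$
The bumps have $C^\theta$ norm of order $1$ because their height is $t^\theta$ and their width is about $t$. Multiplying by $\delta$ gives a quotient of order $\delta\, t^{\theta-1}\to\infty$ as $t\to0$, which exceeds $2N$ plus the bounded quotient of $\phi$.

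\textbf{Main obstacle.} The hard step is showing that $\mu_\lambda$ gives small mass to the $r$-cover of a $\mu_{\lambda_0}$-full set at scale $r\ll t=|\lambda-\lambda_0|$. Equivalently, we need the perturbation to genuinely move mass off the low-dimensional support. As a first attempt, we would condition on the first $k$ digits, with $\lambda_0^k\approx t$. The displacement $\sum_{m\le k}a_m(\lambda^{m-1}-\lambda_0^{m-1})$ varies with the digits at scale comparable to $t$. Meanwhile the tail $\lambda_0^k X_{\text{tail}}$ is a rescaled copy of $\mu_{\lambda_0}$, still of dimension $d<1$. So the translates spread the cover across a $t$-interval, and a counting argument over the $2^k$ branches should show that most of the mass misses the cover, since $r^{1-s}$ is small. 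If this fails for a single $\lambda$, we would instead average over $\lambda$ in $[\lambda_0, \lambda_0+t]$. It is enough that some $\lambda$ in this window gives a large quotient, and Fubini makes the averaged estimate immediate from the Lebesgue-smallness of the cover.
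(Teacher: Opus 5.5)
Your Baire framework is sound. The sets $E_N$ are closed, their union contains every $\phi$ with $h_\phi$ differentiable at $\lambda_0$, and to show $E_N$ has empty interior it suffices to produce, for each $M$, a $\psi$ with $\|\psi\|_{C^\theta}\le 1$ whose difference quotient at $\lambda_0$ exceeds $M$ at some small $t$.

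\textbf{The gap is the construction of that $\psi$.} The mechanism you give for your acknowledged ``main obstacle'' does not deliver it.
\begin{itemize}
\item Your bump is flat on the cover $U$. So the bulk of $\mu_{\lambda_0}$ contributes nothing at first order, and everything rests on showing that $\mu_\lambda$ charges $\{\psi>0\}$ much less than $\mu_{\lambda_0}$ charges $U$.
\item But $\{\psi>0\}$ is the whole neighbourhood $N_{ct}(U)$, not $U$, and $|X(\lambda)-X(\lambda_0)|\le Ct$. The relevant quantity is therefore the proportion of $N_{ct}(U)$ in windows of length about $Ct$ around $\mu_{\lambda_0}$-typical points, and that proportion is at least $c/C$.
\item The hypothesis $\dim\mu_{\lambda_0}<1$ says nothing across a bounded ratio of scales. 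The small Lebesgue measure $r^{1-s}$ of $U$ is irrelevant, because $\psi$ does not vanish off $U$.
\item The heuristic that $X(\lambda)$ given $X(\lambda_0)$ is spread at scale $t$ is false in general. For $\lambda_0<\frac{1}{2}$ the coding map is injective, so $X(\lambda)$ is a deterministic function of $X(\lambda_0)$.
\end{itemize}

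\textbf{The Fubini fallback does not rescue this.} Averaging over $\lambda\in[\lambda_0,\lambda_0+t]$ bounds $\frac{1}{t}\int\mu_\lambda(N)\,d\lambda$ by $\mathrm{Leb}(N)/(t\inf|X^{(1)}|)$. Even that bound holds only for those $a$ where $X^{(1)}=\partial_\lambda X$ stays away from $0$, which you have not arranged.
\begin{itemize}
\item With $N=N_{ct}(U)$ the bound is not small. Since $\mu_{\lambda_0}$ has no atoms, $N_{ct}(U)$ contains a number of disjoint intervals of length $2ct$ that tends to infinity as $t\to 0$, so $\mathrm{Leb}(N)/t\to\infty$.
\item Suppose you shrink the transition width to $\omega\ll t$ with height $\omega^\theta$. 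The global bound then needs $\omega^{1-s}\ll t\ll\omega^\theta$, which is possible only when $\theta<1-s$.
\end{itemize}
So this route at best misses exactly the range of $\theta$ close to $1$.

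\textbf{The missing idea} is to put the slope of the test function where the mass is, and to work infinitesimally at $\lambda_0$.
\begin{itemize}
\item Replace $A$ by $A_0=f_2^{m_0}(A)$. It has $\mu_{\lambda_0}$-mass $2^{-m_0}$ and sits so close to the right endpoint $\frac{1}{1-\lambda_0}$ that the first several digits are forced to be $+1$. Hence $X^{(1)}(\lambda;a)>\frac{1}{2}$ whenever $X(\lambda;a)$ is near $A_0$ and $\lambda$ is near $\lambda_0$.
\item Cover $A_0$ by finitely many intervals of total length $\ell$ and $\mu_{\lambda_0}$-mass at least $\frac{3}{4}2^{-m_0}$. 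Here $\ell$ can be taken arbitrarily small using only the definition of Hausdorff dimension; no exact dimensionality is needed.
\item Take a nondecreasing $C^1$ staircase $\psi$ with $0\le\psi'\le 1$, $\psi'=1$ on the cover, $\psi'$ supported near it, and $\|\psi\|_{C^0}\le 2\ell$.
\item Interpolation gives $|\psi|_{C^\theta}\le 2\|\psi'\|_{C^0}^{\theta}\|\psi\|_{C^0}^{1-\theta}\lesssim\ell^{1-\theta}$.
\item Differentiating under the integral gives $h_\psi'(\lambda_0)=\int\psi'(X(\lambda_0;a))X^{(1)}(\lambda_0;a)\,d\nu(a)\ge\frac{3}{8}2^{-m_0}$. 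No information about $\mu_\lambda$ for $\lambda\neq\lambda_0$ is needed.
\end{itemize}
Thus $\psi/\ell^{1-\theta}$ has bounded $C^\theta$ norm and derivative $\gtrsim\ell^{\theta-1}\to\infty$ at $\lambda_0$. Being $C^1$, it has a large difference quotient for small $t$, which is exactly the input your $E_N$ argument needs.

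The paper instead sums such staircases, uses their monotonicity in $\lambda$ and Fatou to obtain one $\phi$ with $\liminf=\infty$, and then runs a separate Baire lemma. Once you have the staircases, your $E_N$ route is an equally valid way to finish.
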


Note that for  $\dim \mu_{\lambda}<1$, for all $ \lambda\in (0,\frac{1}{2})$, and Theorem \ref{Main Thorem: Counter Example due to Dimension Drop} yields non-differentiability of $h_\phi$ at $\lambda$ for generic $\phi\in C^\alpha$.  One could take a step further to ask if for generic $\phi$, $h_\phi$ is non-differentiable at almost every$\lambda\in (0,\frac{1}{2})$, or at a positive-Lebesgue-measure set of $\lambda$. We present a  partial result in this direction.

\begin{thm}\label{Theorem: Almost Everywhere Non-differentiability}
    For any $\delta\in (0,\frac{1}{4}),\alpha\in (0,1)$ there exists a $\phi\in C^\alpha$ such that $h_\phi$ is not differentiable at Lebesgue almost every $\lambda\in (\delta,\frac{1}{2}-\delta)$.
\end{thm}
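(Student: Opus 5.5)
The plan is to take $\phi$ random and then use Fubini. Fix a smooth bump $\psi$ supported in $[-1,1]$ with $\psi\ge 0$ and $\psi(0)=1$. Set
$$\phi_\omega(x)=\sum_{k\ge 1} a_k(\omega)\,2^{-\alpha k}\psi(2^k x),$$
where $a_k$ are i.i.d.\ symmetric signs. Every realization lies in $C^\alpha$ (Weierstrass-type estimate), uniformly in $\omega$. Writing $X=X(\lambda)$, linearity gives
$$h_{\phi_\omega}(\lambda+t)-h_{\phi_\omega}(\lambda)=\sum_k a_k 2^{-\alpha k}D_k(\lambda,t),\qquad D_k(\lambda,t)=\mathbb{E}\bigl[\psi(2^kX(\lambda+t))-\psi(2^kX(\lambda))\bigr],$$
with $|D_k|\le C\min(1,2^k t)$ because $|X(\lambda+t)-X(\lambda)|\le C_\delta t$ on $(\delta,\frac12-\delta)$.

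The key deterministic input is the following. For Lebesgue a.e.\ $\lambda\in(\delta,\frac12-\delta)$ there should be infinitely many $n$ with $|D_n(\lambda,c2^{-n})|\ge c'$, for constants $c,c'>0$ depending only on $\delta$. Granting this, take $t_n=c2^{-n}$. Condition on all signs except $a_n$. The two values of the difference quotient $\Delta_n:=(h(\lambda+t_n)-h(\lambda))/t_n$ then differ by $2\cdot 2^{-\alpha n}|D_n|/t_n\ge c''2^{(1-\alpha)n}\to\infty$.

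Hence, for each such $n$ and each fixed $M$, $\mathbb{P}(|\Delta_n|>M)\ge\frac12$. Since $\Delta_n$ must converge if $h$ is differentiable at $\lambda$, a Fatou/Borel--Cantelli argument shows that for each good $\lambda$ the event ``$h_{\phi_\omega}$ differentiable at $\lambda$'' has probability at most $\frac12$. Using instead the tail structure (the quotient along the good $n$ is dominated by the independent new term $a_n$), a Kolmogorov $0$--$1$ argument should upgrade this to probability $0$. Fubini on $(\lambda,\omega)$ then yields one $\omega$ such that $h_{\phi_\omega}$ is non-differentiable at a.e.\ $\lambda$.

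The main obstacle is the lower bound on $D_n$. I would first try to prove an averaged version,
$$\int_{\delta}^{\frac12-\delta}|D_n(\lambda,c2^{-n})|^2\,d\lambda\ge c_0>0 \quad\text{for all large } n,$$
together with an upper bound on $\int|D_n|^4\,d\lambda$. Paley--Zygmund then gives $|D_n|\ge c'$ on a set of $\lambda$ of measure bounded below for each $n$. A quasi-independence estimate across well-separated $n$ (a second-moment / Kochen--Stone argument) would make this happen for infinitely many $n$ for a.e.\ $\lambda$.

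To get the $L^2$ lower bound, I would write $D_n$ through the Fourier transform as
$$D_n=\int 2^{-n}\hat\psi(2^{-n}\xi)\bigl(\hat\mu_{\lambda+t}(\xi)-\hat\mu_\lambda(\xi)\bigr)\,d\xi,\qquad \hat\mu_\lambda(\xi)=\prod_m\cos(\lambda^{m-1}\xi).$$
Alternatively, I would work directly with the Cantor structure for $\lambda<\frac12$. Here $\operatorname{supp}\mu_\lambda$ at scale $\sim 2^{-n}$ splits into cylinders of mass $2^{-N}$, where $\lambda^N\approx 2^{-n}$, separated by gaps of relative size $\gtrsim 1-2\lambda\ge 2\delta$. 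The perturbation $\lambda\mapsto\lambda+t_n$ moves each cylinder by $\sum_m a_m(m-1)\lambda^{m-2}t_n$. The transversality of $\lambda\mapsto X(\lambda)$ on $(0,\frac12)$ (distinct codings separate at a rate bounded below) should prevent these moves from averaging $\psi(2^k\cdot)$ back to the same value for most $\lambda$. That forces $\mathbb{E}\psi(2^nX)$ to change by order one at scale $t_n$. If this direct route fails, a fallback is to replace $\psi(2^k x)$ by a $\lambda$-insensitive family adapted to a dyadic range of $\lambda$ and cover $(\delta,\frac12-\delta)$ by finitely many such ranges.
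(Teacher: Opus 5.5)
\textbf{There is a genuine gap.} Your probabilistic scaffolding is sound: conditioning on all signs but $a_n$, the tail/$0$--$1$ argument, and Fubini in $(\lambda,\omega)$. The problem is the ``key deterministic input'' it rests on, which is false, and not for a technical reason.

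All your bumps $\psi(2^kx)$ are centered at the origin, so $\phi_\omega$ is $C^\infty$ on $\mathbb{R}\setminus\{0\}$. For $\lambda<\frac12$, the support of $\mu_\lambda$ misses the central gap $\bigl(-\frac{1-2\lambda}{1-\lambda},\frac{1-2\lambda}{1-\lambda}\bigr)$, whose half-length exceeds $1-2\lambda>2\delta$ when $\lambda<\frac12-\delta$. So only the finitely many terms with $2^{-k}>2\delta$ ever meet the supports. Consequently $D_n(\lambda,t)=0$ whenever $2^{-n}\le 2\delta$ and $\lambda,\lambda+t\in(\delta,\frac12-\delta)$, and $h_{\phi_\omega}$ is $C^\infty$ on $(\delta,\frac12-\delta)$ for every $\omega$.

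Re-centering the bumps inside the support does not rescue the plan. A bump of width $2^{-n}$ only sees the $\mu_\lambda$- and $\mu_{\lambda+t}$-mass of an interval of length $2^{1-n}$. Such an interval meets $O_\delta(1)$ level-$m$ cylinders with $\lambda^m\approx 2^{-n}$, so this mass is $O(2^{-sn})$ with $s=\frac{\log 2}{\log(1/\lambda)}$. Hence $|D_n|\ge c'$ is impossible, and no Fourier, Paley--Zygmund or transversality argument in $\lambda$ can produce it. One bump per scale gains at most about $2^{(1-\alpha-s)n}$ in the difference quotient.

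\textbf{The missing idea.} At each scale the perturbation must be spread over the \emph{whole} support in many independent pieces, so that the size of the derivative comes from a central-limit effect rather than a deterministic lower bound. This is what the paper does:
\begin{enumerate}
\item It sets $\phi_N(x)=\int_0^x g_N$, where $g_N$ carries an independent bump $\pm N^{\theta_1}g_\rho$ (each sign with probability $N^{-\theta_2}$) on every one of the $N$ intervals $[\frac{2(j-1)}{N},\frac{2j}{N}]$.
\item For a \emph{fixed} $\lambda$ with $\lambda^m\approx N^{-1}$, the level-$m$ cylinders are at least $2/N$ apart. So $\int\phi_N'(X)X^{(1)}\,d\nu=\sum_l D_l$ is a sum of $2^m$ independent mean-zero terms, one per cylinder.
\item On the cylinders coded by $n_0$ initial $2$'s one has $X^{(1)}>\frac12$. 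The self-similar measure lemma keeps half their mass off the transition zones of $g_\rho$, which gives a variance lower bound.
\item Berry--Esseen makes the sum exceed $N^\beta$ outside an event of probability $O(N^{-\tau})$.
\item Borel--Cantelli along fast-growing $N_k$ (with Azuma--Hoeffding for the H\"older norms), $\epsilon_k$ chosen after $N_k$ via Taylor's theorem, and Fubini finish the proof.
\end{enumerate}
Since the conclusion holds almost surely for each fixed $\lambda$, no averaging or transversality in $\lambda$ is needed.

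One caution if you follow this route. In the Berry--Esseen step the Gaussian term is $N^\beta/\sigma\asymp 2^{m/2}N^{\beta-\theta_1+\theta_2/2}$, not $2^{-m/2}N^{\beta-\theta_1+\theta_2/2}$ as written in the paper. You therefore need $\theta_1>\beta+\frac{\theta_2+s}{2}$ (with $2^m\asymp N^s$). This must be reconciled with the H\"older constraint $\theta_1<\frac{1-\alpha}{2}-\beta$, and as written the argument closes only for $\alpha<1-s$.
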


Note 
that differentiability of $\lambda \mapsto \int\phi \,d\mu_{\lambda}$ reflects the Hausdorff dimension $\dim \mu_{\lambda}$ of $\mu_{\lambda}$, which has long been  the central topic in the study of Bernoulli convolution. It is well known that $\dim\mu_{\lambda}\le \frac{\log2}{-\log\lambda}$ \cite[Chapter~3,10]{d013e7c057ce4b198906d18f73a2536b}, and equality holds if $\lambda\in(0,\frac{1}{2}]$. For $\lambda \in(\frac{1}{2},1)$, according to Hochman\cite{hochman2014self} and Varju\cite{varju2019dimension}, $\dim\mu_{\lambda} = 1$ except possibly at a certain class of algebraic parameters that satisfy the ``exact overlap assumption''. And for the moment the only known examples in this class are the reciprocals of Pisot numbers
\cite{garsia1963entropy} \cite{feng2004bernoulli}. Moreover, the smallest Pisot number is the plastic constant $\psi \approx 1.3247$ \cite{siegel1944algebraic}, so the largest reciprocal is $\frac{1}{\psi}\approx 0.7548$. It is conjectured that for all $\lambda$ sufficiently close to $1$, $\mu_\lambda$ should have dimension $1$.
Note that $\frac{1}{\psi}\approx 0.7548<2^{-\frac{1}{3}}$ and therefore Theorem \ref{Main Thorem: Counter Example due to Dimension Drop} does not give any non-differentiable points in $(2^{-\frac{1}{3}},1)$ where Theorem \ref{Main Theorem: Differentiablility of Bernoulli Convolution with Holder Test Function} applies.

Although the results presented above provide some insight on the regularity of \eqref{Average}, there are many open questions, and we highlight some of them below.

\begin{que}
It it true that $\lambda \mapsto \int\phi \,d\mu_{\lambda}$ 
is differentiable for {\bf all} $\lambda$ sufficiently close to 1,
provided that the H\"older exponent of $\phi$ is sufficiently close to 1?
\end{que}

Note that Theorem \ref{Main Thorem: Counter Example due to Dimension Drop} shows that if the answer to this question is affirmative, then $\mu_\lambda$ has dimension 1 for all $\lambda$ sufficiently
close to 1. Unfortunately, such a result is out of reach at the moment.


\begin{que}
\label{Q2}
    Does Theorem \ref{Theorem: Almost Everywhere Non-differentiability} hold for generic H\"older functions?
\end{que}

Intuitively, this would mean that for most $\lambda\in (0,\frac{1}{2})$, the support $K_\lambda$ of $\mu_{\lambda}$ does not intersect much with $K_{\lambda'}$ as $\lambda'\to \lambda$, so that $\int\phi\,d\mu_{\lambda}$ is quite ``independent'' to $\int\phi\,d\mu_{\lambda'}$ for generic $\phi$, which causes non-differentiability. 

We note that the counterexamples of Theorems \ref{Main Thorem: Counter Example due to Dimension Drop} and \ref{Theorem: Almost Everywhere Non-differentiability} rely on carefully
designed functions. Question \ref{Q2} asks what happens in the typical case. Another  way to approach this question is to consider what happens for classical non smooth functions.

\begin{que}
Given $a, b>1$ let $\DS W_{a,b}(x)=\sum_{n=1}^\infty \frac{\sin(a^n x)}{b^n}$. Study the smoothness of the map
$(a,b, \lambda)\mapsto \int W_{a,b} (x) d\mu_\lambda(x)$.
\end{que}

The above questions pertain to Bernoulli convolutions. However the problem is interesting in the more general context.

\begin{que}
Study the regularity of \eqref{Average} in the case where $\mu_\lambda$ is a family of SRB measure for Anosov diffeomorphsims $f_\lambda$ depending smoothly 
on a parameter $\lambda.$
\end{que}

We note that for $C^1$ observables the linear response is well understood, see references above.\\

Let us end the introduction by pointing out that, though \eqref{Average} is readily differentiable whenever $\phi$ is, it is highly nontrivial to decide if $\DS \frac{\mu_{\lambda+\epsilon} - \mu_{\lambda}}{\epsilon}$ converges in stronger senses, like under the $W_q$ metric or converges in the dual of some function space, which is usually termed as the ``uniform linear response'' problem. We refer interested readers to \cite{alves2026linearresponseskewproductmaps} \cite{KLOECKNER_2022}. Among other things, in the first article it is shown that $\lim_{\epsilon\to 0} \frac{\mu_{\lambda+\epsilon} - \mu_{\lambda}}{\epsilon}$  exist in the dual of $C^3$ functions, and in the second it is shown that $\lambda\mapsto\mu_\lambda$ is Lipschitz in the Wasserstein
distances $W_q$ for all $q>1$.

The rest of the paper is organized as follows. 
In \S \ref{Section: Linear Response with Holder-Continuous Test Function} we prove Theorem \ref{Main Theorem: Differentiablility of Bernoulli Convolution with Holder Test Function}, 
Theorem~\ref{Main Thorem: Counter Example due to Dimension Drop} is proven in \S \ref{Section: Counter Example due to Dimension Drop} and Theorem \ref{Theorem: Almost Everywhere Non-differentiability}
is proven in \S \ref{Section: Almost everywhere Non-differentiability}.

\section{Linear Response with Holder-Continuous Test Function}
\label{Section: Linear Response with Holder-Continuous Test Function}

\subsection{Proof strategy}
The purpose of this section is to prove Theorem \ref{Main Theorem: Differentiablility of Bernoulli Convolution with Holder Test Function}. 
Let us describe the 
strategy. We will show that there exists $\rho:(2^{-\frac{1}{3}},1)\to \mathbb{R}$ such that for almost every $2^{-\frac{1}{3}}<a<b<1$,
\begin{equation}\label{Lebesgue Differentiable Theorem}
    h(b)-h(a)= \int_a^b \rho(t)\,dt.
\end{equation}
and that $\rho$ is locally integrable. Then Theorem \ref{Main Theorem: Differentiablility of Bernoulli Convolution with Holder Test Function} follows by Lebesgue differentiability theorem. Indeed, $h$ will be absolutely continuous with $h'= \rho$ almost everywhere. 

To obtain $\rho$, we convert the integrand defining $h$ into their Fourier transforms

$$h(b)-h(a) = \frac{1}{2\pi} \int_{\mathbb{R}} \hat{\phi}(\xi) (\hat{\mu}_{b}(\xi) - \hat{\mu}_a (\xi) )\,d\xi = \frac{1}{2\pi}\int_{\mathbb{R}} \hat{\phi}(\xi) \int_a^b \frac{d\hat{\mu}_{\lambda}(\xi)}{d\lambda}\,d\lambda \,d\xi,$$
where $\hat{\phi}(\xi):= \int_{\mathbb{R}} e^{- i \xi x} \phi(x)\,dx$ and $\hat{\mu}_{\lambda} (\xi) = \int_{\mathbb{R}} e^{ i \xi x}\, d\mu_{\lambda}(x)$.  The first equality is not true in general, but will hold if both $\mu_a$ and $\mu_b$ are absolutely continuous with $L^2$-densities, thanks to Parseval identity. By the famous result of Solomyak \cite{solomyak1995random}, for a.e. $\lambda\in(\frac{1}{2},1)$, $\mu_\lambda$ has such property. So we will first deal with the case where $\mu_a$,$\mu_b$ have $L^2$ densities, and then pass to every $a,b$. 

As for the second equality, it is due to the fact that $\hat{\mu}_{\lambda}(\xi)$ is an analytic function in $\lambda$. In fact, by the definition of Bernoulli convolution,
\begin{equation}
    \hat{\mu}_\lambda (\xi) = \prod_{n=0}^{\infty}\frac{1}{2}\widehat{(\delta_{\lambda^n}+\delta_{-\lambda^{n}})}(\xi) = \prod_{n=0}^{\infty} \cos(\lambda^n \xi).
\end{equation}
Therefore, it suffices to show that 
$$\int_a^b \int_{\mathbb{R}} |\hat{\phi}(\xi) \frac{d\hat{\mu}_{\lambda}(\xi)}{d\lambda}| \,d\xi\,d\lambda<\infty.$$
To that end, we will need a result from \cite{peres2000smoothness}, which establishes 

$$ \int_{I}||\hat{\mu}_{\lambda}||_{\gamma,2}\,d\lambda:= \int_I \int_{\mathbb{R}} |\hat{\mu}_{\lambda}(\xi)|^2 |\xi|^{2\gamma} \,d\xi\,d\lambda<\infty$$
for some suitable choice of interval $I$ and some $\gamma>0$. 

The rest of \S \ref{Section: Linear Response with Holder-Continuous Test Function} is organized as follows. 
\S \ref{Subsection: Finiteness of Fractional Sobolev Norm} 
contains the necessary background.
In \S \ref{Subsection: Upper bound on the derivative} we  bound the term $|\frac{d}{d\lambda}\hat{\mu}_{\lambda}(\xi)|$. Finally, we prove Theorem \ref{Main Theorem: Differentiablility of Bernoulli Convolution with Holder Test Function} in \S \ref{Subsection: Proof of Main Theorem}.

\subsection{Finiteness of Fractional Sobolev Norm}\label{Subsection: Finiteness of Fractional Sobolev Norm}

Recall that the Fourier transform of a finite Borel measure$\nu$ on $\mathbb{R}$ 
is defined as
$\DS \hat{\nu}(\xi):= \int_{\mathbb{R}} e^{ i \xi x}\,d\nu(x).$
For $\gamma>0$, the standard $2,\gamma$-Sobolev space $L^2_{\gamma}$ is defined by the norm
$$||\nu||_{2,\gamma}^2:= \int_{\mathbb{R}} |\hat{\nu}(\xi)|^2 |\xi|^{2\gamma}\,d\xi.$$
The result we rely on is the following.
\begin{lm}[Lemma $5.5$ of \cite{peres2000smoothness}]\label{Lemma: Finite Fractional Sobolev Norm}
    For any $\epsilon>0$, there exists a $\gamma = \gamma(\epsilon)>0$ so that, 
    $$\int_{\frac{1}{2}+\epsilon}^{\frac{1}{\sqrt{2}}} ||\mu_\lambda||_{2,\gamma}^2 \,d\lambda<\infty.$$

\end{lm}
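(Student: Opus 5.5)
This is the Peres--Schlag lemma, so my plan is to reconstruct the transversality argument of \cite{peres2000smoothness} rather than invent a new one.

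\textbf{Step 1: reduce to an energy integral.} Write $\mu_\lambda$ as the law of $X(\lambda)=\sum_{m\ge 1} a_m\lambda^{m-1}$. Take a smooth compactly supported weight $\rho\ge 0$ with $\rho\ge 1$ on $[\frac12+\epsilon,\frac{1}{\sqrt2}]$. For $0<\gamma<\frac12$ the standard identity
$$\|\nu\|_{2,\gamma}^2 = c_\gamma\iint |x-y|^{-1-2\gamma}\,d\nu(x)\,d\nu(y)$$
holds, interpreted as a regularized energy. Rather than use this singular kernel literally, I would decompose dyadically in frequency. With $\psi_k$ a smooth bump at $|\xi|\sim 2^k$, it suffices to show
$$\int \rho(\lambda)\int \psi_k(\xi)\,|\hat\mu_\lambda(\xi)|^2\,d\xi\,d\lambda \lesssim 2^{-k\delta}$$
for some $\delta>0$. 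Then summing against $2^{2k\gamma}$ converges as soon as $2\gamma<\delta$.

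\textbf{Step 2: express the dyadic piece as a double sum.} Expanding $|\hat\mu_\lambda|^2$ gives an average over two independent sign sequences $a,b$:
$$\mathbb{E}_{a,b}\int \rho(\lambda)\,2^k\,\check\psi\big(2^k g_{a,b}(\lambda)\big)\,d\lambda,\qquad g_{a,b}(\lambda)=X_a(\lambda)-X_b(\lambda)=\sum_m (a_m-b_m)\lambda^{m-1}.$$
Group pairs by the first index $n$ with $a_n\neq b_n$. Then $g_{a,b}=2\lambda^{n-1}\cdot(\pm1)\cdot f(\lambda)$, where $f$ is a power series with $f(0)=1$ and coefficients in $\{-1,0,1\}$. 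This grouping happens with probability $2^{-n}$.

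\textbf{Step 3: transversality (the key input).} I would invoke transversality of such power series on $[\frac12+\epsilon,\frac1{\sqrt2}]$, i.e. the $(*)$-function estimates of Solomyak and Peres--Solomyak. There is $\eta>0$ such that $|f(\lambda)|<\eta$ implies $|f'(\lambda)|>\eta$. Using this together with the rapid decay of $\check\psi$, integration by parts (or a change of variables near zeros of $f$) gives
$$\int\rho(\lambda)\,2^k\,\check\psi\big(2^{k}\lambda^{n-1}f(\lambda)\big)\,d\lambda \lesssim_N 2^k\min\Big(1,\;(2^k\lambda_*^{n})^{-1}\Big)\cdot\text{(decay)}.$$
Here $\lambda_*=\frac12+\epsilon$, and the bound includes the higher-derivative control needed for smooth cutoffs.

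\textbf{Step 4: sum over $n$.} Splitting at $n_0$ where $2^k\lambda_*^{n_0}\approx1$, the contribution is
$$\sum_{n\le n_0}2^{-n}\lambda_*^{-n}\;+\;\sum_{n>n_0}2^{-n}2^k.$$
The factor $(2\lambda_*)^{-n}$ is summable precisely because $\lambda_*>\frac12$. The tail is of size $2^{k-n_0}=2^{k(1-\log 2/\log(1/\lambda_*))}$, which is exponentially small in $k$ since $\lambda_*<\frac1{\sqrt2}$ keeps $\log2/\log(1/\lambda)>1$ with a margin. This yields the gain $2^{-k\delta}$ with $\delta=\delta(\epsilon)>0$, and hence $\gamma(\epsilon)$.

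\textbf{Main obstacle.} The hardest step is Step 3. Transversality of $\{-1,0,1\}$-power series holds on this interval only after careful verification, and Peres--Solomyak need it up to roughly $0.649$. Reaching $\frac1{\sqrt2}$ needs the Peres--Schlag trick: write $\mu_\lambda=\mu^{(1)}_{\lambda^2}*\mu^{(2)}_{\lambda^2}$ via even and odd digits, so $\lambda^2\in(\frac14,\frac12)$ sits in the transversality range. I would first try that reduction.
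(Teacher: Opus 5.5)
The paper gives no proof of this lemma; it quotes it from Peres--Schlag. So your outline has to stand on its own. Its architecture is the right one: Littlewood--Paley blocks, splitting pairs by the first disagreeing digit, transversality, and an energy count. Two steps fail as written, however.

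\textbf{The Step 3 estimate cannot give the gain claimed in Step 4.} The quantity $2^k\min\bigl(1,(2^k\lambda_*^n)^{-1}\bigr)$ is just $2^k$ times the measure of $\{\lambda:|g_{a,b}(\lambda)|\lesssim 2^{-k}\}$. Inserting it, the part $n\le n_0$ of your sum is $\sum_{n\le n_0}(2\lambda_*)^{-n}\asymp 1$; its first term alone is $(2\lambda_*)^{-1}$. So each dyadic block is only $O(1)$, and $\sum_k 2^{2k\gamma}\,O(1)$ diverges even for $\gamma=0$. The factor $2^{k-n_0}$ comes only from the tail.

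A size estimate of this kind is exactly the information in the Peres--Solomyak $L^2$ (lower-density) argument, and it carries no Sobolev gain. Any $\gamma>0$ requires cancellation. You need that, for $2^k\lambda^n\gg 1$, the $\lambda$-integral is $\lesssim \mathrm{poly}(n)\,2^k(2^k\lambda^n)^{-L}$, with $L$ so large that $2\lambda_*^L<1$.
\begin{itemize}
\item Where $|f|\gtrsim 1/n$, this follows from the Schwartz decay of $\check\psi$.
\item Where $|f|$ is smaller, transversality makes $\lambda\mapsto\lambda^{n-1}f(\lambda)$ monotone with derivative $\gtrsim\lambda^{n}$. You can then change variables and integrate by parts $L$ times against the vanishing moments of $\check\psi$ (recall $\psi$ vanishes near $0$).
\end{itemize}
With this, the $n\le n_0$ part is also $\lesssim \mathrm{poly}(k)\,2^{k-n_0}$. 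Incidentally, $\log 2/\log(1/\lambda)>1$ comes from $\lambda_*>\frac12$. The endpoint $2^{-1/2}$ plays no role in the count and enters only through transversality.

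\textbf{Transversality fails near $2^{-1/2}$, and your repair runs the wrong way.} Uniform transversality on $[\frac12+\epsilon,2^{-1/2}]$ is false for the full $\{-1,0,1\}$ class: it contains power series with double zeros below $2^{-1/2}$ (Shmerkin--Solomyak). This is why the classical transversality range stops short of $2^{-1/2}$.

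Your even/odd splitting does not help. For $\lambda^2<\frac12$, $\mu_{\lambda^2}$ is a singular Cantor measure, so $\|\mu_{\lambda^2}\|_{2,\gamma}=\infty$ for every $\gamma\ge 0$. Hence the bound $|\hat\mu_\lambda(\xi)|\le|\hat\mu_{\lambda^2}(\xi)|$ gives nothing. Transversality in the variable $\lambda^2$ does not help either, because the pair count $\sum_n 2^{-n}\lambda^{-2n(1+2\gamma)}$ diverges once $2\lambda^2<1$. Keeping both factors of $\hat\mu_\lambda(\xi)=\hat\mu_{\lambda^2}(\xi)\hat\mu_{\lambda^2}(\lambda\xi)$ just re-interleaves the digits and returns you to $\mu_\lambda$ with the same bad class. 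This identity transports regularity upward, from $\lambda^2$ to $\lambda$. That is exactly how the paper uses Lemma~\ref{Lemma: Convolution Identity} in the proof of Theorem~\ref{Main Theorem: Differentiablility of Bernoulli Convolution with Holder Test Function}.

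To bridge $[0.649,2^{-1/2}]$, the classical device (Solomyak, Peres--Solomyak) is thinning. Discard a periodic set of digits, e.g.\ every third one, so that $|\hat\mu_\lambda|\le|\hat\eta_\lambda|$, where $\eta_\lambda$ is the law of the thinned series. The difference series then have prescribed zero coefficients, a class that remains transversal on this range. The energy condition becomes $\lambda^{1+2\gamma}>2^{-2/3}$, which still holds there for small $\gamma$.
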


We  also need a well-known identity due to the infinite convolution structure of $\mu_\lambda$.

\begin{lm}\label{Lemma: Convolution Identity}
    For any $\lambda\in(0,1),\xi\in\mathbb{R}$ and $m\in\mathbb{N}$, it holds

    \begin{equation}
    \hat{\mu}_{\lambda}(\xi) = \hat{\mu}_{\lambda^m}(\xi) \hat{\mu}_{\lambda^m}(\lambda\xi)\cdots \hat{\mu}_{\lambda^m}(\lambda^{m-1}\xi). 
\end{equation}
\end{lm}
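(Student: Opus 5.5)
We use the infinite product formula for the Fourier transform already recorded above,
$$\hat{\mu}_\lambda(\xi)=\prod_{n=0}^{\infty}\cos(\lambda^n\xi),$$
which converges absolutely for every fixed $\lambda\in(0,1)$ and $\xi\in\mathbb{R}$, since $1-\cos(\lambda^n\xi)=O(\lambda^{2n}\xi^2)$ is summable. The plan is to split the index set $n\ge 0$ into residue classes modulo $m$ and regroup the product class by class; no analytic estimate beyond absolute convergence is needed.

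Write every $n\ge 0$ uniquely as $n=km+j$ with $k\ge 0$ and $0\le j\le m-1$. Because the product converges absolutely, its factors may be rearranged and grouped freely, so
$$\hat{\mu}_\lambda(\xi)=\prod_{j=0}^{m-1}\prod_{k=0}^{\infty}\cos\bigl(\lambda^{km+j}\xi\bigr)
=\prod_{j=0}^{m-1}\prod_{k=0}^{\infty}\cos\bigl((\lambda^m)^k\,\lambda^j\xi\bigr).$$
For each fixed $j$, the inner product is exactly the product formula for $\mu_{\lambda^m}$ evaluated at the point $\lambda^j\xi$, because $\lambda^m\in(0,1)$. Hence the inner product equals $\hat{\mu}_{\lambda^m}(\lambda^j\xi)$, and multiplying over $j=0,\dots,m-1$ gives the identity claimed in Lemma~\ref{Lemma: Convolution Identity}.

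The only point needing care is the rearrangement step. If one wants to avoid appealing to absolute convergence of infinite products, one can argue probabilistically instead. Split $X(\lambda)=\sum_{j=0}^{m-1}\lambda^{j}Y_j$, where $Y_j=\sum_{k\ge0}a_{km+j+1}(\lambda^m)^k$. The $Y_j$ are independent, since they use disjoint blocks of the i.i.d.\ signs, and each has law $\mu_{\lambda^m}$. Then $\mu_\lambda$ is the convolution of the pushforwards of $\mu_{\lambda^m}$ under the maps $x\mapsto\lambda^j x$. Since the Fourier transform of a convolution is the product of the Fourier transforms, and the pushforward under $x\mapsto\lambda^j x$ has Fourier transform $\xi\mapsto\hat{\mu}_{\lambda^m}(\lambda^j\xi)$, the identity follows directly. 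I would present this second argument, as it is cleanest and needs no convergence bookkeeping.
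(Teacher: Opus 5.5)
Your first argument (regrouping the cosine product by residue classes mod $m$) is exactly the paper's one-line proof. You don't even need absolute convergence there: the finite identity $\prod_{n=0}^{Km+m-1}\cos(\lambda^n\xi)=\prod_{j=0}^{m-1}\prod_{k=0}^{K}\cos(\lambda^{km+j}\xi)$ passes to the limit $K\to\infty$, because each of the $m$ inner products converges to $\hat{\mu}_{\lambda^m}(\lambda^j\xi)$.

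The argument you say you would present is a genuinely different route, and it is correct. Instead of fully factoring $\hat{\mu}_\lambda$ and then regrouping, you split $X(\lambda)=\sum_{j=0}^{m-1}\lambda^jY_j$ directly into $m$ independent blocks, each a copy of $X(\lambda^m)$. This gives the identity at the level of measures: $\mu_\lambda=\eta_0*\cdots*\eta_{m-1}$, where $\eta_j$ is the image of $\mu_{\lambda^m}$ under $x\mapsto\lambda^jx$. The Fourier identity then follows from multiplicativity of characteristic functions for independent sums. The steps you need all hold:
\begin{itemize}
\item the series rearrangement is absolutely convergent;
\item the blocks use disjoint sets of the i.i.d.\ signs, so they are independent;
\item $(a_{km+j+1})_{k\ge0}$ has the same law as $(a_{k+1})_{k\ge0}$, so each $Y_j$ has law $\mu_{\lambda^m}$.
\end{itemize}
What your route buys is independence from the infinite product formula and a stronger, measure-level statement. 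The paper's route is shorter only because $\hat{\mu}_\lambda(\xi)=\prod_n\cos(\lambda^n\xi)$ was already recorded. That formula is itself the ``fully split'' version of your decomposition (independence of all the summands $a_n\lambda^{n-1}$), so both proofs rest on the same independence structure, grouped at different stages.
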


\begin{proof}
    In the  infinite product
    $\displaystyle \hat{\mu}_{\lambda}(\xi) = \prod_k \cos(\lambda^k\xi) $
    divide the powers of $\lambda$ according to their congruence classes modulo $m$.
\end{proof}

\subsection{An Upper Bound for |$\frac{d}{d\lambda} \hat{\mu}_{\lambda}(\xi)$|}\label{Subsection: Upper bound on the derivative}
This subsection includes a simple lemma giving an upper bound for the derivative (with respect to parameter $\lambda$) of the Fourier transform $\DS \hat{\mu}_{\lambda}(\xi) =\prod_{n=0}^\infty \cos(\lambda^n \xi).$ Since it is an analytic function in $\lambda$ for each fixed $\xi\in\mathbb{R}$, we have

\begin{equation}
    \frac{d}{d\lambda}  \hat{\mu}_\lambda(\xi) = -\xi\sum_{n=1}^\infty n\lambda^{n-1} \sin(\lambda^n \xi)\prod_{k=0,k\neq n}^{\infty}   \cos(\lambda^k \xi ).
\end{equation}

And one can bound the term $\DS \prod_{k=0,k\neq n}^{\infty}   \cos(\lambda^k \xi )$ by following lemma.

\begin{lm}\label{Lemma: Bound on Derivative}
    For any $\lambda\in(0,1),\xi\in\mathbb{R}$ and $n\in\mathbb{N}$, we have
    $$\prod_{k=0,k\neq n}^{\infty}   |\cos(\lambda^k \xi )| \le |\hat{\mu}_{\lambda^3} (\xi)|^2+|\hat{\mu}_{\lambda^3} (\lambda\xi)|^2+|\hat{\mu}_{\lambda^3} (\lambda^2\xi)|^2.$$
\end{lm}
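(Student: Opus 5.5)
Write $P_n:=\prod_{k\ge 0,\,k\neq n}|\cos(\lambda^k\xi)|$. By Lemma \ref{Lemma: Convolution Identity} with $m=3$, each factor $|\cos(\lambda^k\xi)|$, $k\ge 0$, appears exactly once in exactly one of the three products
$$A_j:=|\hat{\mu}_{\lambda^3}(\lambda^j\xi)|=\prod_{l\ge 0}|\cos(\lambda^{3l+j}\xi)|,\qquad j=0,1,2,$$
namely in $A_j$ with $j\equiv k \pmod 3$. The index $n$ is excluded from exactly one residue class, say $n\equiv r\pmod 3$. Removing a factor bounded by $1$ can only increase the product. Hence
$$P_n=\Big(\prod_{l\ge0,\,3l+r\neq n}|\cos(\lambda^{3l+r}\xi)|\Big)\cdot\prod_{j\neq r}A_j\;\le\;\prod_{j\neq r}A_j .$$
Dropping the whole class $r$ costs nothing, since every factor is at most $1$.

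So $P_n\le A_sA_t$, where $\{s,t\}=\{0,1,2\}\setminus\{r\}$. By AM--GM, $A_sA_t\le \tfrac12(A_s^2+A_t^2)\le A_0^2+A_1^2+A_2^2$, which is the claimed bound.

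The only point needing care is the bookkeeping that the residue classes mod $3$ partition $\{k\ge0\}$. This is exactly what Lemma \ref{Lemma: Convolution Identity} encodes, once we write $\hat\mu_{\lambda^3}(\lambda^j\xi)=\prod_l\cos(\lambda^{3l}\lambda^j\xi)$.

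There is no real obstacle. The choice $m=3$ is what guarantees that two full classes survive the deletion of index $n$, so that AM--GM produces squares. Those squares are what later let us invoke the $L^2$ Sobolev bound of Lemma \ref{Lemma: Finite Fractional Sobolev Norm} for $\lambda^3\in(\tfrac12,\tfrac1{\sqrt2})$, i.e. $\lambda\in(2^{-1/3},2^{-1/6})$.
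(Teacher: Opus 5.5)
Your proof is correct and follows the paper's argument: split the indices by residue mod $3$, bound every factor in the residue class of $n$ by $1$ to get $P_n\le A_sA_t$, and then pass to a sum of squares via $A_sA_t\le\frac12(A_s^2+A_t^2)$. The only cosmetic difference is that the paper first bounds $P_n$ by the sum $A_1A_2+A_0A_2+A_0A_1$ of all three cross products and then converts to squares, whereas you apply AM--GM directly to the single relevant product.
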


\begin{proof}
    By Lemma \ref{Lemma: Convolution Identity},
$\DS    \hat{\mu}_{\lambda}(\xi) = \hat{\mu}_{\lambda^3}(\xi) \hat{\mu}_{\lambda^3}(\lambda\xi) \hat{\mu}_{\lambda^3}(\lambda^{2}\xi). $
Suppose $n\equiv0 \text{ mod }3$, then 
$$\prod_{k=0,k\neq n}^{\infty}   |\cos(\lambda^k \xi )| \le \prod_{k\equiv 1 \text{ mod 3}}   |\cos(\lambda^k \xi )|\prod_{k\equiv 2 \text{ mod }3}  | \cos(\lambda^k \xi )| = |\hat{\mu}_{\lambda^3}(\lambda\xi)||\hat{\mu}_{\lambda^3}(\lambda^2\xi)|$$
where we bound all the terms $\DS |\cos(\lambda^{k }\xi)|, k\equiv 0 \text{ mod }3$ by $1$. Similarly, 

$$\prod_{k=0,k\neq n}^{\infty}   |\cos(\lambda^k \xi )| \le |\hat{\mu}_{\lambda^3}(\xi)||\hat{\mu}_{\lambda^3}(\lambda^2\xi)|,\text{ if } k\equiv1 \text{ mod }3$$

and

$$\prod_{k=0,k\neq n}^{\infty}   |\cos(\lambda^k \xi )| \le |\hat{\mu}_{\lambda^3}(\xi)||\hat{\mu}_{\lambda^3}(\lambda\xi)|,\text{ if } k\equiv2 \text{ mod }3.$$
Therefore we conclude
\begin{align*}
    \prod_{k=0,k\neq n}^{\infty}   |\cos(\lambda^k \xi )| &\le |\hat{\mu}_{\lambda^3}(\xi)||\hat{\mu}_{\lambda^3}(\lambda^2\xi)|+ |\hat{\mu}_{\lambda^3}(\xi)||\hat{\mu}_{\lambda^3}(\lambda^2\xi)|+ |\hat{\mu}_{\lambda^3}(\xi)||\hat{\mu}_{\lambda^3}(\lambda^2\xi)|\\
    &\le |\hat{\mu}_{\lambda^3} (\xi)|^2+|\hat{\mu}_{\lambda^3} (\lambda\xi)|^2+|\hat{\mu}_{\lambda^3} (\lambda^2\xi)|^2
\end{align*}
as desired.
\end{proof}

\subsection{Proof of Main Theorem \ref{Main Theorem: Differentiablility of Bernoulli Convolution with Holder Test Function}} \label{Subsection: Proof of Main Theorem}

\begin{proof}

Fix any $(2)^{-\frac{1}{3}}<a<b<1$. We  show that 

$$|h(b)-h(a)|\le \int_a^b |\rho(t)|\,dt <\infty,$$
where $\rho$ is the formal derivative

$$\rho(t) = \int_{\mathbb{R}}\hat{\phi}(\xi)\left(\frac{d\hat{\mu}_{\lambda}(\xi)}{d\lambda}\right)|_{\lambda = t}\,d\xi.$$

Fix an $\epsilon>0$ so that $a^3>\frac{1}{2}+\epsilon=: a_0$, and set $b_0:= \frac{1}{\sqrt{2}}$. Take $\gamma>0$ satisfying the conclusion of Lemma \ref{Lemma: Finite Fractional Sobolev Norm} . That is, we have

\begin{equation}
      \int_{a_0}^{b_0}  \int_{\mathbb{R}} |\xi|^{2\gamma} |\hat{\mu}_{\lambda} (\xi)|^2\,d\xi\,d\lambda<\infty.
\end{equation}

Without loss of generality , assume there exists $n\in\mathbb{N}$ such that $[a^3,b^3]\subset [a_0^{\frac{1}{n}},b_0^{\frac{1}{n}}]$, otherwise we write $[a,b]$ as a finite union of intervals of this form to conclude the proof. Finally, let $\phi:\mathbb{R}\to\mathbb{R}$ be a H$\ddot{o}$lder continuous function with H$\ddot{o}$lder exponent $\alpha\ge 1-2\gamma$. 

Since for $\lambda\in[a,b]$, the support of $\mu_{\lambda}$ is contained in a uniform compact set, we may assume $\phi$ is also compactly supported. In particular it is in $L^1 \cap L^2$, and we define its Fourier transform by $\DS \hat{\phi}(\xi) = \int_{\mathbb{R}} e^{-i\xi x}\phi(x)\,dx$. We first assume $a,b$ are such that both $\mu_a$ and $\mu_b$ has $L^2$ densities. We will get rid of this additional assumption at the end. 

 We have
\begin{align}
 \nonumber   h(b)-h(a)&= \int_{\mathbb{R}}\phi(x) \,d\mu_b(x) - \int_{\mathbb{R}}\phi(x)\,d\mu_a(x) \\
 \label{FTDiff}
    &=\frac{1}{2\pi} \int_{\mathbb{R}} \hat{\phi}(\xi) \hat{\mu}_{b}(\xi)\,d\xi -  \int_{\mathbb{R}} \hat{\phi}(\xi) \hat{\mu}_{b}(\xi)\,d\xi\\
\nonumber    &= \frac{1}{2\pi} \int_{\mathbb{R}} \hat{\phi}(\xi)\int_a^b \frac{d}{d\lambda} \hat{\mu}_\lambda(\xi)
 \,d\lambda\,d\xi  
 =\frac{1}{2\pi}\int_a^b \rho(t)\,dt\\
 &= \frac{-\xi}{2\pi}\int_a^b\int_{\mathbb{R}} \hat{\phi}(\xi) \left(\sum_{n=0}^\infty n\lambda^{n-1} \sin(\lambda^n \xi)\prod_{k=0,k\neq n}^{\infty}   \cos(\lambda^k \xi ) \right)\,d\xi\,d\lambda.
 \end{align}
 
Note that the second equality \eqref{FTDiff} is due to Parseval's identity, which we now verify.

Since $\mu_a$ is assumed to have $L^2$ density, and $\mu_a$ is compactly supported, the density is also $L^1$. Assume the density is $f$, then
$$\int_{\mathbb{R}}\phi(x) \,d\mu_a(x) = \int_{\mathbb{R}}\phi(x) f(x) \,dx = \frac{1}{2\pi}\int_{\mathbb{R}} \hat{\phi}(\xi)f^{\vee}(\xi) \,d\xi=\frac{1}{2\pi}\int_{\mathbb{R}} \hat{\phi}(\xi)\hat{\mu}_a(\xi) \,d\xi$$
by the usual Parseval identity (see for example \cite[Chapter VI.3]{katznelson2004introduction}), where $f^{\vee}(\xi) = \hat{f}(-\xi) = \int_{\mathbb{R}} e^{i\xi x}f(x)\,dx$ is the inverse Fourier transformation of $f$ and the last equality holds since $\DS f^{\vee}(\xi) =  \int_{\mathbb{R}} e^{i\xi x}f(x)\,dx = \int_{\mathbb{R}} e^{i\xi x}\,d\mu_a(x) = \hat{\mu}_a(\xi)$. The same argument applies to $\mu_b$.

By Lemma \ref{Lemma: Bound on Derivative}, we have
\begin{align}
    |h(b)-h(a)| \le \int_a^b \sum_{n=0}^\infty nb^{n-1} \int_{\mathbb{R}} |\hat{\phi}(\xi)\xi| (|\hat{\mu}_{\lambda^3} (\xi)|^2+|\hat{\mu}_{\lambda^3} \left(\lambda\xi)|^2+|\hat{\mu}_{\lambda^3} (\lambda^2\xi)|^2\right)\,d\xi\,d\lambda
\end{align}
and we wish to show the right hand side is finite. Since $b<1$ and $|\hat{\phi}(\xi)| = \mathcal{O}
(|\xi|^{-\alpha})$ since $\phi$ is compactly supported and $\alpha$-H$\ddot{o}$lder continuous \cite{stein1971introduction} , it suffices to show
$$\int_a^b  \int_{\mathbb{R}} |\xi|^{1-\alpha} (|\hat{\mu}_{\lambda^3} (\xi)|^2+|\hat{\mu}_{\lambda^3} \left(\lambda\xi)|^2+|\hat{\mu}_{\lambda^3} (\lambda^2\xi)|^2\right)\,d\xi\,d\lambda<\infty.$$
 Moreover, by a change of variable and noting that $\lambda>\frac{1}{2}, \alpha>0$
\begin{align*}
    \int_a^b  \int_{\mathbb{R}} |\xi|^{1-\alpha} |\hat{\mu}_{\lambda^3} (\lambda\xi)|^2\,d\xi\,d\lambda&= \int_a^b   \lambda^{\alpha-2}\int_{\mathbb{R}} |\xi|^{1-\alpha} |\hat{\mu}_{\lambda^3} (\xi)|^2\,d\xi\,d\lambda
    < 4 \int_a^b  \int_{\mathbb{R}} |\xi|^{1-\alpha} |\hat{\mu}_{\lambda^3} (\xi)|^2\,d\xi\,d\lambda
\end{align*}

Similarly,

$$  \int_a^b  \int_{\mathbb{R}} |\xi|^{1-\alpha} |\hat{\mu}_{\lambda^3} (\lambda^2\xi)|^2\,d\xi\,d\lambda< 8 \int_a^b  \int_{\mathbb{R}} |\xi|^{1-\alpha} |\hat{\mu}_{\lambda^3} (\xi)|^2\,d\xi\,d\lambda.$$
Summing these up and taking into account that $\alpha\ge 1-2\gamma$, we are left to prove, 
\begin{equation*}
      \int_a^b  \int_{\mathbb{R}} |\xi|^{2\gamma} |\hat{\mu}_{\lambda^3} (\xi)|^2\,d\xi\,d\lambda < \infty,
\end{equation*}
which is justified by the following computation:
\begin{align*}
    \nonumber  \int_a^b  \int_{\mathbb{R}} |\xi|^{2\gamma} |\hat{\mu}_{\lambda^3} (\xi)|^2\,d\xi\,d\lambda &
    = \int_{a^3}^{b^3}  \frac{1}{3}\lambda^{-\frac{2}{3}} \int_{\mathbb{R}} |\xi|^{2\gamma} |\hat{\mu}_{\lambda} (\xi)|^2\,d\xi\,d\lambda <
     \\
     \frac{8}{3}  \int_{a_0^{\frac{1}{n}}}^{b_0^{\frac{1}{n}}}  \int_{\mathbb{R}} |\xi|^{2\gamma} |\hat{\mu}_{\lambda} (\xi)|^2\,d\xi\,d\lambda
    &=  \frac{8}{3}  \int_{a_0^{\frac{1}{n}}}^{b_0^{\frac{1}{n}}}  \int_{\mathbb{R}} |\xi|^{2\gamma} |\hat{\mu}_{\lambda^n} (\xi)|^2|\hat{\mu}_{\lambda^n} (\lambda\xi)|^2\cdots |\hat{\mu}_{\lambda^n} (\lambda^{n-1}\xi)|^2\,d\xi\,d\lambda \le \\
    \frac{8}{3}  \int_{a_0^{\frac{1}{n}}}^{b_0^{\frac{1}{n}}}  \int_{\mathbb{R}} |\xi|^{2\gamma} |\hat{\mu}_{\lambda^n} (\xi)|^2\,d\xi\,d\lambda
    &= \frac{8}{3}  \int_{a_0}^{b_0} \frac{\lambda^{-1+\frac{1}{n}}}{n} \!\!\! \int_{\mathbb{R}} |\xi|^{2\gamma} |\hat{\mu}_{\lambda} (\xi)|^2\,d\xi\,d\lambda
    \\&
    \!\!<\!\! \frac{16}{3} \int_{a_0}^{b_0}  \!\!\!\int_{\mathbb{R}} |\xi|^{2\gamma} |\hat{\mu}_{\lambda} (\xi)|^2\,d\xi\,d\lambda <\infty.
\end{align*}

 The second inequality above is true since $ [a^3,b^3]\subset [{a_0^{\frac{1}{n}}},b_0^{\frac{1}{n}}]$ and $\lambda>\frac{1}{2}$, and the middle equality is due to Lemma \ref{Lemma: Convolution Identity}.

It remains to drop the assumption on $L^2$ density. By \cite[Theorem~1.1]{solomyak1995random}, for a.e. $\lambda \in (\frac{1}{2},1)$, $\mu_\lambda$ has a $L^2$ density. Let $\{a_l\},\{b_l\}$ be two sequences of parameters in $(a,b)\subset(\frac{1}{2},1)$ such that $a_l$ decreases to $a$ and $b_l$ increases to $b$, and that $\mu_{a_l}$ and $\mu_{b_l}$ have $L^2$ density for all $l$. Then the  proceeding argument yields, for any $l$,
\begin{align*}
    |h(b_l) - h(a_l)| \le M \int_{a_l}^{b_l}  \sum_{m=0}^{\infty} m b_l^m\int_{\mathbb{R}} |\xi|^{2\gamma} |\hat{\mu}_{\lambda}(\xi)|^2\,d\xi\,d\lambda 
    \le M\int_{a}^{b}  \sum_{m=0}^{\infty} m b^m\int_{\mathbb{R}} |\xi|^{2\gamma} |\hat{\mu}_{\lambda}(\xi)|^2\,d\xi\,d\lambda 
\end{align*}
where $M>0$ depends only on $\phi$. 

Moreover, $\lambda\mapsto \mu_\lambda$ is a continuous in the weak$^{*}$ topology \cite{d013e7c057ce4b198906d18f73a2536b}, so $h(\lambda) = \int_{\mathbb{R}} \phi(x)\,d\mu_{\lambda}(x)$ is continuous. Letting $l\to\infty$, we have
\begin{equation}
      |h(b) - h(a)| \le M\int_{a}^{b}  \sum_{m=0}^{\infty} m b^m\int_{\mathbb{R}} |\xi|^{2\gamma} |\hat{\mu}_{\lambda}(\xi)|^2\,d\xi\,d\lambda. 
\end{equation}
Finally
$\DS h(b) - h(a) = \lim_{l\to\infty} h(b_l) - h(a_l) = \lim_{l\to\infty}\int_{a_l}^{b_l} \rho(t)\,dt = \int_a^b \rho(t)\,dt$
as $\rho$ is locally integrable, completing the proof.
\end{proof}

We end this section by remarking a possible way to remove ``almost everywhere'' in the conclusion. Since the above proof implies that $\rho$ is the a.e. derivative of $h$, it suffices to show that $\rho$ is a continuous function on $(2^{-\frac{1}{3}},1)$ (or at least on some interval very close to $1$). One could repeat the trick above and write

\begin{align*}
   | \rho(b) - \rho(a)|&\!\!=\!\! \left|\int_{\mathbb{R}}\hat{\phi}(\xi)\left(\frac{d\hat{\mu}_{\lambda}(\xi)}{d\lambda}|_{\lambda 
   = b} - \frac{d\hat{\mu}_{\lambda}(\xi)}{d\lambda}|_{\lambda = a}\right)\,d\xi\right|
    \!\!=\!\! \left|\int_{\mathbb{R}} \int_a^b \hat{\phi}(\xi)\frac{d^2\hat{\mu}_{\lambda}(\xi)}{d\lambda^2}|_{\lambda = t} \,dt \,d\xi\right|\\
    &\le C \int_a^b \int_{\mathbb{R}} |\hat{\phi}(\xi)| \xi^{2} \hat{\mu}_{\lambda^4}^2(\xi) \,d\xi \,d\lambda 
    \le C \int_a^b \int_{\mathbb{R}} |\xi|^{2-\alpha} \hat{\mu}_{\lambda^4}^2(\xi) \,d\xi \,d\lambda 
\end{align*}
for some absolute constant $C>0$. Therefore, if one could improve the constant $\gamma_0$ in Lemma \ref{Lemma: Finite Fractional Sobolev Norm} to be greater than $\frac{1}{2}$, then $\rho$ will be the integral of a locally integrable function and would be continuous. Unfortunately, the author does not know how to obtain larger $\gamma_0$.

Another remark is that, it is well known that for $\lambda$ close to $1$, the Fourier transforms $\hat{\mu}_{\lambda}$ has larger and larger power decays at infinity, for $\lambda$ outside an exceptional set, whose Hausdorff dimension tends to $0$ as $\lambda\to 1$ \cite{6d572e36-0805-32d8-aa03-d87a400eaab8} \cite{MSMF_1971__25__119_0}. It is therefore tempting to expect that the above discussion has some chance of being true if $a,b$ are close to one.
\section{Nondifferentiability due to Dimension Drop}\label{Section: Counter Example due to Dimension Drop}

\subsection{ Proof strategy.}
In this section we prove Theorem \ref{Main Thorem: Counter Example due to Dimension Drop}. The proof consists of the following two lemmas:

\begin{lm}\label{Lemma: Existence of a blowup test function}
    Under the assumptions of Theorem \ref{Main Thorem: Counter Example due to Dimension Drop}, there exists $\phi\in C^\theta$ such that 
    $$\liminf_{\epsilon\to 0} \frac{h_\phi(\lambda_0+\epsilon) - h_\phi(\lambda_0)}{\epsilon} = \infty.$$
\end{lm}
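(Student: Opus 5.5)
The plan is to use the dimension drop to find a set carrying all of $\mu_{\lambda_0}$ that is small enough to support a $C^\theta$ ``distance-type'' function. We want that function to vanish on this set and to be large on most of $\mu_{\lambda_0+\epsilon}$.

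Since $\dim\mu_{\lambda_0}=s<1$, pick $s<t<1$. For each scale $r_j=\lambda_0^{j}$ (or a faster-decaying sequence), the level-$j$ cylinders of $\mu_{\lambda_0}$ are the intervals $x_w+\lambda_0^jK_{\lambda_0}$, indexed by words $w\in\{\pm1\}^j$ with centers $x_w=\sum_{m\le j}a_m\lambda_0^{m-1}$. Using $\dim\mu_{\lambda_0}<t$ (exact-dimensionality of self-similar measures gives a uniform statement), choose a set $E_j$ with $\mu_{\lambda_0}(E_j)\ge 1-\eta_j$. We want $E_j$ covered by $N_j\le r_j^{-t}$ intervals of length $r_j$.

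Now compare with $\mu_{\lambda_0+\epsilon}$. Coupling through the same digits gives $X(\lambda_0+\epsilon)-X(\lambda_0)=\epsilon X'(\lambda_0)+O(\epsilon^2)$, where $X'(\lambda)=\sum_m (m-1)a_m\lambda^{m-2}$ is a nondegenerate random variable. The law of $X'(\lambda_0)$ conditional on $X(\lambda_0)$ should not concentrate at $0$ at scale $r$. Hence, with probability bounded below, the point $X(\lambda_0+\epsilon)$ lies at distance $\gtrsim\epsilon$ from $X(\lambda_0)$. We need more: it should also be at distance $\gtrsim \epsilon$ from the whole small set $E_j$ at the scale $r_j\approx\epsilon$. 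This is where $t<1$ enters. The $r$-neighbourhood of $E_j$ has Lebesgue measure $\lesssim r^{1-t}\to0$. Meanwhile, I would show that $\mu_{\lambda_0+\epsilon}$ gives mass $\le c<1$ to any set of $N\le r^{-t}$ intervals of length $r=\epsilon$. I would argue this through the coupling: the perturbation $\epsilon X'$ spreads each cylinder's mass over a window of size $\sim\epsilon$, which is comparable to $r$.

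Define
\[
\phi=\sum_j c_j\,\min\bigl(\operatorname{dist}(x,E_j),r_j\bigr)^{\theta}.
\]
Each term is $\theta$-Hölder with constant $c_j$, so $\phi\in C^\theta$ provided $\sum c_j<\infty$. Take $\phi(x)\ge 0$, with $h_\phi(\lambda_0)=\sum_j c_j\int\min(\operatorname{dist},r_j)^\theta d\mu_{\lambda_0}\le\sum_j c_j\eta_j r_j^\theta$, which is made tiny. For $\epsilon\approx r_j$ the $j$-th term contributes $\gtrsim c_j r_j^{\theta}$ to $h_\phi(\lambda_0+\epsilon)$. Dividing by $\epsilon\approx r_j$ gives a quotient $\gtrsim c_j r_j^{\theta-1}$, which tends to $\infty$ when $c_j=1/j^2$, say, because $\theta<1$. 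To get a $\liminf$ over all $\epsilon\to0$ rather than along a sequence, choose $r_j$ geometric so that every small $\epsilon$ lies within a bounded factor of some $r_j$. The contributions of the other terms are nonnegative for $\lambda_0+\epsilon$. They are subtracted at $\lambda_0$ only in the total amount $\sum_j c_j\eta_jr_j^\theta$, and this is $o(\epsilon)$ once $\eta_j$ decays fast enough. Negative $\epsilon$ is handled identically.

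The main obstacle is the anti-concentration step: showing $\mu_{\lambda_0+\epsilon}(E_j^{(\delta r_j)})\le c<1$ uniformly for $\epsilon\sim r_j$. I would attack it through the coupling. Condition on the first $j$ digits; the remaining tail shifts by $\epsilon$ times a fixed nonconstant function of the digits. Combining this with the counting bound $N_j\le r_j^{-t}$ and a pigeonhole argument should show that a positive proportion of the mass lands outside the neighbourhood. If this fails uniformly, I would fall back to proving it along a subsequence and then reinterpolating, at the cost of a more careful choice of scales.
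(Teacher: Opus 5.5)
\textbf{There is a genuine gap: your construction points the wrong way.} Your $\phi$ is nonnegative and is designed to (nearly) vanish on the bulk of $\mu_{\lambda_0}$. Everything you aim for is a lower bound $h_\phi(\lambda_0+\epsilon)-h_\phi(\lambda_0)\gtrsim c_j r_j^{\theta}>0$ for $|\epsilon|\approx r_j$, which makes $\lambda_0$ a cusp-type local minimum of $h_\phi$. For $\epsilon<0$ the same positive numerator is divided by a negative $\epsilon$. So ``negative $\epsilon$ is handled identically'' gives difference quotients tending to $-\infty$, and the two-sided $\liminf$ in the lemma is $-\infty$, not $+\infty$. At best you would get $|\cdot|\to\infty$ along a one-sided sequence; that would suffice for the genericity lemma, but it is not the statement.

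The bookkeeping also fails. $h_\phi(\lambda_0)\le\sum_i c_i\eta_i r_i^{\theta}$ is a fixed positive constant, not $o(\epsilon)$. You cannot take all $\eta_i=0$: for $\lambda_0\in(\frac12,1)$ (e.g.\ an inverse Pisot number) the support of $\mu_{\lambda_0}$ is an interval, any full-measure $E_i$ is dense in it, and $\operatorname{dist}(\cdot,E_i)$ vanishes there. If you compare term by term instead, each coarse term $i<j$ can drop by up to $\eta_i(C|\epsilon|)^{\theta}$. That loss is not dominated by the gain $c_j|\epsilon|^{\theta}$ once $c_j\to0$. Finally, the anti-concentration of $\mu_{\lambda_0+\epsilon}$ off the $\delta r_j$-neighbourhood of $E_j$, which carries the whole gain, is left unproved. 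It is delicate, since $\mu_{\lambda_0+\epsilon}$ may itself be singular at that scale.

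The missing idea is to make every building block monotone in $\lambda$ and to compute its derivative exactly at $\lambda_0$, so that nothing about $\mu_{\lambda_0+\epsilon}$ is ever needed. The paper proceeds as follows:
\begin{enumerate}
\item It moves the small set to the right end of the support, $A_0=f_2^{m_0}(A)$, which carries $\mu_{\lambda_0}$-mass $2^{-m_0}$. Near $\frac{1}{1-\lambda}$ the first many digits are all $+1$, so $X^{(1)}(\lambda;a)>\frac12$ for all $\lambda$ near $\lambda_0$.
\item It covers $A_0$ by intervals of tiny total length and takes $\phi_n\in C^1$ nondecreasing, with $0\le\phi_n'\le1$, $\phi_n'=1$ on the cover, and $\phi_n'$ supported near $A_0$.
\item Then $\|\phi_n\|_{C^0}$ is at most twice the length of the cover, and interpolation gives $\|\phi_n\|_{C^\theta}\lesssim n^{-2}$. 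This is the only place $\dim\mu_{\lambda_0}<1$ is used.
\item The $C^1$ formula $h_n'(\lambda)=\int\phi_n'(X)X^{(1)}\,d\nu$ gives $h_n'\ge0$ near $\lambda_0$ and $h_n'(\lambda_0)\ge\frac38 2^{-m_0}$.
\item Since each $h_n$ is nondecreasing, every difference quotient of $h_n$ is $\ge0$ for both signs of $\epsilon$. Fatou's lemma over $n$ then gives $\liminf_{\epsilon\to0}\ge\sum_n h_n'(\lambda_0)=\infty$.
\end{enumerate}
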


\begin{lm}\label{Lemma: One blowup implies genericity}
    Under the assumptions
of Theorem \ref{Main Thorem: Counter Example due to Dimension Drop},
   if there exists $\phi \in C^\theta$ and $\{\epsilon_n\}_{n\in \mathbb{N}}$ converging to $0$ such that 
   $\DS \lim_{n\to \infty} \left|\frac{h_\phi(\lambda_0+\epsilon_n) - h_\phi(\lambda_0)}{\epsilon_n}\right| = \infty,$ then  $h_{\psi}$ is not differentiable at $\lambda_0$ for generic $\psi\in C^\theta$. 
   
   In fact, the conclusion remains valid if $\{\mu_\lambda\}$ is replaced by any family of measures~$\{\rho_\lambda\}$.
\end{lm}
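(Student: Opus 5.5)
This is a Baire category argument in the Banach space $C^\theta$. Its only input is the linearity of $\psi\mapsto h_\psi(\lambda)$ together with the existence of one $\phi$ whose difference quotients blow up along $\{\epsilon_n\}$. No property of the measures is used beyond the fact that each $h_\psi(\lambda)=\int\psi\,d\rho_\lambda$ is finite and linear in $\psi$. This is why the conclusion extends to an arbitrary family $\{\rho_\lambda\}$.

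For $\psi\in C^\theta$ define the linear functionals
$$T_n(\psi):=\frac{h_\psi(\lambda_0+\epsilon_n)-h_\psi(\lambda_0)}{\epsilon_n}=\frac{1}{\epsilon_n}\int\psi\,d(\rho_{\lambda_0+\epsilon_n}-\rho_{\lambda_0}).$$
If $h_\psi$ is differentiable at $\lambda_0$, then $\sup_n|T_n(\psi)|<\infty$. So it suffices to show that
$$B:=\{\psi\in C^\theta:\ \sup_n|T_n(\psi)|<\infty\}$$
is meagre; its complement is then residual, and on it $h_\psi$ is not differentiable at $\lambda_0$.

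Write $B=\bigcup_{M\in\mathbb N}B_M$ with $B_M:=\{\psi:\ |T_n(\psi)|\le M \text{ for all } n\}$. Each $T_n$ is continuous on $C^\theta$, since $|T_n(\psi)|\le 2\epsilon_n^{-1}\|\psi\|_\infty\le 2\epsilon_n^{-1}\|\psi\|_{C^\theta}$ for probability measures. Hence $B_M=\bigcap_n T_n^{-1}([-M,M])$ is closed. It remains to show $B_M$ has empty interior.

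Suppose instead that $B_M$ contains a ball $\{\psi:\|\psi-\psi_0\|_{C^\theta}\le r\}$. Put $\psi=\psi_0+t\phi$ with $t=r/\|\phi\|_{C^\theta}$. By linearity,
$$|T_n(\phi)|=t^{-1}|T_n(\psi)-T_n(\psi_0)|\le 2M/t$$
for all $n$. This contradicts $|T_n(\phi)|\to\infty$. Hence $B$ is a countable union of closed nowhere dense sets, and its complement is residual in the complete space $C^\theta$.

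The only delicate point is measurability and finiteness of $T_n$ on all of $C^\theta$ for a general family $\{\rho_\lambda\}$. For the Bernoulli convolutions this is immediate, since the measures are probability measures with uniformly compact support. For general families I would simply assume, as is implicit, that each $\int\psi\,d\rho_\lambda$ is finite and continuous in $\psi$. Everything else is routine; alternatively one can quote the Banach–Steinhaus theorem directly to the family $\{T_n\}$.
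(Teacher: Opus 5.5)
Your proof is correct, but it takes a different route from the paper's.

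\textbf{Your route.} You show that $B=\{\psi:\sup_n|T_n(\psi)|<\infty\}$ is meagre by writing $B=\bigcup_M B_M$ with each $B_M$ closed and nowhere dense. This is the condensation-of-singularities form of Banach--Steinhaus. You then conclude genericity of the complement by Baire's theorem in the Banach space $C^\theta$.

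\textbf{The paper's route.} The paper first passes to a subsequence so that $|T_n(\phi)|>n^2$. It then sets $B_n=\{\psi:|T_n(\psi)|>n\}$, which is open, and takes $G=\bigcap_N\bigcup_{n\ge N}B_n$. This set is $G_\delta$ by construction. Density is checked by hand: if $f\notin G$, then $|T_n(f)|\le n$ for all large $n$. Hence $|T_n(f+\kappa'\phi)|\ge\kappa' n^2-n>n$ eventually, so $f+\kappa'\phi\in G$ for every $\kappa'>0$.

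\textbf{Comparison.} The paper's argument gives an explicit approximant and proves density without appealing to Baire. It also produces a smaller generic set, $G\subseteq B^c$, with a quantitative rate: $|T_n(\psi)|>n$ infinitely often. Your argument avoids the subsequence normalization and works verbatim under the weaker hypothesis $\sup_n|T_n(\phi)|=\infty$. It also makes transparent that the only inputs are linearity and continuity of the $T_n$, which is exactly why the extension to arbitrary families $\{\rho_\lambda\}$ holds.

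You could also skip Baire in your version. If $\psi_0\in B$, then $\psi_0+t\phi\notin B$ for every $t\neq 0$, so $B^c$ is dense directly.

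Both arguments rely on the same implicit assumption that each $T_n$ is continuous on $C^\theta$; in the paper this is the unproved remark that ``clearly $B_n$ is open''. You were right to flag it.
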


Clearly the above two lemmas imply Theorem \ref{Main Thorem: Counter Example due to Dimension Drop}.

The proof of Lemma \ref{Lemma: One blowup implies genericity} is elementary but of independent interest, as it does not involve $\mu_{\lambda_0}$ at all. Below we sketch our strategy to prove Lemma \ref{Lemma: Existence of a blowup test function}. 

We will take advantage of the fact that if $\mu_{\lambda_0}$ has dimension less than $1$, then there are sets with small Lebesgue measure but large $\mu_{\lambda_0}$-measure. One can thus arrange a sequence of  $C^1$ test functions $\phi_n$ so that it concentrate more and more on these sets, that they converge to a $C^\theta$ function $\phi$, and that the derivative of $\lambda\mapsto \int\phi\,d\mu_{\lambda}$ blows up to infinity. This method works in general for the stationary measure of contracting on average iterated function systems.

More precisely, we will reduce the proof to the existence of a family of special test functions $\{\phi_n\}_{n\in\mathbb{N}}$, as in the following proposition

\begin{prop}\label{Proposition: Non-diff example; Intermediate Step}
   Under the assumption of Theorem \ref{Main Thorem: Counter Example due to Dimension Drop}, there exists a family of $C^1$, nonnegative functions $\{\phi_n\}$ such that:
    \begin{enumerate}[(1)]
        \item There exists $\epsilon_0 >0$ such that for all $n$ and all $\epsilon$ with $|\epsilon|<\epsilon_0$, $h_n'(\lambda_0+\epsilon)$ exist and are nonnegative. Here $ h_n(\lambda) = h_{\phi_n}(\lambda):=\int \phi_n \,d\mu_{\lambda}$.
        \item $\DS \sum_{n} h_n'(\lambda_0) = \infty$.
        \item  $\DS \phi:=  \sum  \phi_n$ is bounded and H$\ddot{o}$lder continuous, with H$\ddot{o}$lder exponent $\theta$.
    \end{enumerate}
\end{prop}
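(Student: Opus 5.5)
The plan is to build the $\phi_n$ as sums of small, steep bumps placed on an efficient cover of a set carrying most of the mass of $\mu_{\lambda_0}$, with one scale $r_n$ per $n$. Since $\dim\mu_{\lambda_0}<1$, fix $s$ with $\dim\mu_{\lambda_0}<s<1$. By the definition of $\dim\mu_{\lambda_0}$ there is a Borel set $A$ with $\mu_{\lambda_0}(A)=1$ and $\dim_H A<s$. Along a sequence of scales $r_n\downarrow 0$ (chosen geometrically and very sparse), we can therefore cover a subset of $A$ of $\mu_{\lambda_0}$-measure at least $1/2$ by $N_n\le r_n^{-s}$ intervals of length $r_n$. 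This gives the key quantitative input: a set of Lebesgue measure at most $r_n^{1-s}\to 0$ that still carries a fixed proportion of $\mu_{\lambda_0}$.

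On each covering interval $I$, place a $C^1$ function of height $r_n^\theta$ and slope of order $r_n^{\theta-1}$: a ramp rising across $I$ and then decaying over a much longer tail. We then sum over the intervals, arranging the pieces so that they do not overlap at scale $r_n$. This gives $\phi_n\ge0$ with $\|\phi_n\|_\infty\lesssim r_n^\theta$ and $\|\phi_n\|_{C^\theta}\lesssim 1$. Property (3) then follows from the standard lacunary argument. To estimate $|\phi(x)-\phi(y)|$ with $|x-y|\approx r_k$, use the Lipschitz bound $r_n^{\theta-1}|x-y|$ for $n\le k$ and the sup bound $r_n^\theta$ for $n>k$. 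Because the $r_n$ decay superexponentially, both resulting geometric sums are $O(|x-y|^\theta)$.

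For (1) and (2), differentiate under the integral:
\[
h_n'(\lambda)=\mathbb{E}\bigl[\phi_n'(X(\lambda))\,\partial_\lambda X(\lambda)\bigr],\qquad \partial_\lambda X(\lambda)=\sum_{m\ge2}(m-1)a_m\lambda^{m-2}.
\]
This is legitimate since $\phi_n\in C^1$ and $\partial_\lambda X$ is uniformly bounded near $\lambda_0$. The heuristic is that $\phi_n'$ has size $r_n^{\theta-1}$ on a set of $\mu_{\lambda_0}$-mass at least $1/2$. So if the sign of $\partial_\lambda X$ can be aligned with that of $\phi_n'$, then $h_n'(\lambda_0)\gtrsim r_n^{\theta-1}\to\infty$, which gives (2) with room to spare. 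Since $\sum_n r_n^{\theta-1}=\infty$ trivially, we can afford to lose constant factors, and even factors like $r_n^{\delta}$ for small $\delta$.

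The main obstacle is the sign condition (1): $\partial_\lambda X$ is not sign-definite, and (1) must hold uniformly for $|\epsilon|<\epsilon_0$. The first thing I would try is to exploit the self-similar structure. Condition on the first few digits: on the cylinder $a_1=\dots=a_k$ fixed, $X(\lambda)=p_k(\lambda)+\lambda^kY$, where $Y$ is an independent copy with law $\mu_\lambda$. For a cylinder whose $\partial_\lambda p_k$ dominates $\sup|\partial_\lambda(\lambda^kY)|$ (for instance $a_2=\dots=a_k=+1$, with $k$ large depending on $\lambda_0$), the sign of $\partial_\lambda X$ is then fixed on a neighbourhood of $\lambda_0$. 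We then place the ramps only on the corresponding subinterval $J$ of positions; this costs a fixed factor $2^{-k}$ of mass, which is harmless. On the complementary cylinders, where $X$ could still land on the support of $\phi_n'$, I would make $\phi_n'$ vanish on the other cylinder images, using the separation available at scale $r_n$. Where such separation fails, I would instead choose the ramp directions, increasing or decreasing, to match the local sign of $\partial_\lambda X$. If this alignment cannot be made exact, a fallback is to bound the bad contribution by $r_n^{\theta-1}$ times the $\mu_{\lambda}$-mass of a thin neighbourhood, and absorb it into the positive main term. Uniformity in $\epsilon$ would come from the weak$^*$ continuity of $\lambda\mapsto\mu_\lambda$ together with the fact that all sign estimates are open conditions in $\lambda$.
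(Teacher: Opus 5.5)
Your lacunary argument for (3) is sound, but property (1) has a genuine gap, and it comes from the shape of your bumps rather than from a missing detail.

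\textbf{Why (1) fails as proposed.} Your bumps rise across a covering interval and then decay along a tail, so $\phi_n'<0$ on the tails. The positivity of $h_n'(\lambda_0)=\mathbb{E}[\phi_n'(X(\lambda_0))\,\partial_\lambda X(\lambda_0)]$ comes only from the ramps. These occupy Lebesgue measure $\lesssim r_n^{1-s}$ and were chosen to carry $\mu_{\lambda_0}$-mass.

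For a fixed $\lambda\neq\lambda_0$ and $n\to\infty$, nothing forces $\mu_\lambda$ to charge these ramps. For instance, if $\lambda_0>\frac12$ and $\mu_\lambda$ is absolutely continuous, as it is for a.e.\ $\lambda\in(\frac12,1)$, then $\mu_\lambda(\text{ramps})\to0$. The sign of $h_n'(\lambda)$ is then decided by how the pushforward of $\partial_\lambda X\,d\nu$ under $X(\lambda;\cdot)$ splits between ramps and tails at scale $r_n$, and you do not control that.

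Weak$^*$ continuity and ``open conditions'' give, for each fixed $n$, a neighbourhood $U_n$ of $\lambda_0$ on which $h_n'>0$. But $U_n$ depends on $n$: the $\phi_n'$ have size $r_n^{\theta-1}$ and vary at scale $r_n$, so there is no equicontinuity in $\lambda$. Property (1) requires a single $\epsilon_0$ for all $n$. This is exactly what the Fatou step needs: the difference quotients $F(\epsilon,n)$ equal $h_n'$ at some point between $\lambda_0$ and $\lambda_0+\epsilon$, and must be nonnegative for every $n$.

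Your fallbacks have the same defect. Choosing ramp directions to match signs reintroduces $\phi_n'$ of both signs. Absorbing bad contributions into ``the positive main term'' presupposes a main term, which exists only at $\lambda_0$.

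\textbf{The missing idea: monotonicity.} Take $\phi_n$ nondecreasing with $0\le\phi_n'\le1$, $\phi_n'=1$ on a finite cover of most of $f_2^{m_0}(A)$, and $\phi_n'$ supported in a fixed small neighbourhood of the right endpoint $\frac1{1-\lambda_0}$.
\begin{itemize}
\item Any $a$ with $X(\lambda;a)>\frac1{1-\lambda}-\delta'$ must have $a_1=\dots=a_m=+1$, hence $\partial_\lambda X(\lambda;a)>\frac12$, uniformly for $\lambda$ near $\lambda_0$.
\item This is a criterion on positions, not on cylinders. So it also settles the overlap problem for $\lambda_0>\frac12$, where the ``separation at scale $r_n$'' you invoke does not exist.
\item Consequently $h_n'(\lambda)\ge0$ holds on a fixed neighbourhood of $\lambda_0$ for free.
\end{itemize}

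Monotonicity rules out your slope $r_n^{\theta-1}$: the total rise would be of order $r_n^{\theta-s}$, unbounded when $s>\theta$. So use slope $\le1$ instead. The rise is then at most twice the Lebesgue measure of the cover, which the dimension drop makes arbitrarily small. Hence $|\phi_n|_{C^\theta}\le2\|\phi_n\|_\infty^{1-\theta}$ is summable, while $h_n'(\lambda_0)\ge\frac12\mu_{\lambda_0}(\text{cover})\ge c>0$. This is bounded below rather than divergent, but it is enough for $\sum_n h_n'(\lambda_0)=\infty$.

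\textbf{A secondary point on covers.} The route above only needs variable-size covers with $\sum_j|I_j|\le\delta^{1-s}\sum_j|I_j|^s$ small. Your single-scale cover by $N_n\le r_n^{-s}$ intervals of length $r_n$ is a box-counting statement and does not follow from $\dim_H A<s$ alone. For Bernoulli convolutions it could be recovered from exact dimensionality, but it is not needed.
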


Note that it is part of the proposition that $h_n$'s are differentiable. This is because $\phi_n$'s are $C^1$ as will be explained in \S \ref{Subsection: Proof of Main Proposition}.

In the rest of this section, we first show how to prove Theorem \ref{Main Thorem: Counter Example due to Dimension Drop} assuming Proposition  \ref{Proposition: Non-diff example; Intermediate Step} in \S \ref{Subsectioin: proof of counterexample assuming proposition}. Then we go to construction of such family $\{\phi_n\}$ in \S \ref{Subsection: Proof of Main Proposition}. Finally, we present the proof of Lemma \ref{Lemma: One blowup implies genericity} in \S \ref{Subsection: proof of lemma: one blow up implies genericity}.   

We end this subsection by recalling the notion the Hausdorff dimension of subsets of $\mathbb{R}$ \cite{O’Neil_1999} . 

\begin{df}
   Fix any subset $A\subset\mathbb{R}$. For any $s>0,\delta>0$, define 
    $$H_\delta^s(A):= \inf\{\sum_j |I_j|^s: \{I_j\} \text{ is a cover of } A \text{ s.t. the diameter }|I_j|< \delta ,\forall j\}.$$

Here the diameter $|X|$ of any subset of $\mathbb{R}$ is defined as $\DS |X|:=\sup_{x,y\in X}|x-y|$.

By construction for fixed $s$, $H^s_\delta(A)$ is increasing in $\delta$, so the limit 
$\DS H^s(A):= \lim_{\delta\to0^+} H^s_\delta(A)$ is well defined (but may equal infinity). Moreover, there exists a unique $s_0\ge 0$ such that
$$s_0 = \inf\{s\ge 0: H^s(A)=0\} = \sup\{s\ge 0: H^s(A)=\infty\}.$$
The Hausdorff dimension of $A$ is then defined to be $\dim_H A = s_0$.
\end{df}

\subsection{Proof of Theorem \ref{Main Thorem: Counter Example due to Dimension Drop} using Proposition \ref{Proposition: Non-diff example; Intermediate Step}} \label{Subsectioin: proof of counterexample assuming proposition}

\begin{proof}
    Given $\{\phi_n\}$ and $\phi$, define $h(\lambda)  = \int \phi\,d\mu_{\lambda}, \lambda\in(\lambda_0-\epsilon_0,\lambda_0+\epsilon_0)$ . We will show that $\DS \liminf_{\epsilon\to 0} \frac{h(\lambda_0+\epsilon) - h(\lambda_0)}{\epsilon} = \infty$, so that $h$ is not differentiable at $\lambda_0$. 

Let $ F(\epsilon,n) := \frac{h_n(\lambda_0+\epsilon) - h_n(\lambda_0)}{\epsilon}, n\in\mathbb{N},0<|\epsilon|<\epsilon_0$. Note that 
$$\lim_{\epsilon\to 0} F(\epsilon,n) = h_n'(0),\quad \forall n\in\mathbb{N}$$
and 
$\DS F(\epsilon,n)\ge 0,\quad  \forall 0<|\epsilon|<\epsilon_0,  n\in\mathbb{N}$
since $h_n'(\lambda_0+\epsilon)\ge 0$ for all $0<|\epsilon|<\epsilon_0$. 
Thus 

\begin{align}
    \liminf_{\epsilon\to 0} \frac{h(\lambda_0+\epsilon) - h(\lambda_0)}{\epsilon} &= \liminf_{\epsilon
    \to 0} \frac{\int \sum_{n=1}^\infty \phi_n \,d\mu_{\lambda_0+\epsilon} - \int\sum_{n=1}^\infty \phi_n \,d\mu_{\lambda_0} }{\epsilon}\nonumber \\
    &= \liminf_{\epsilon
    \to 0}  \frac{\sum_{n=1}^\infty  (\int  \phi_n \,d\mu_{\lambda_0+\epsilon} - \int \phi_n \,d\mu_{\lambda_0}) }{\epsilon} \nonumber\\
    &= \liminf_{\epsilon\to 0} \int F(\epsilon,n) \,dc(n) \quad c \text{ is the counting measure on }\mathbb{N}\nonumber\\
    &\ge \int\liminf_{\epsilon\to 0} F(\epsilon,n) \,dc(n) \quad \text{ by Fatou's Lemma }\nonumber \\
    &= \sum_{n=1}^\infty h_n'(0) = \infty,
\end{align}
which implies $h$ is not differentiable at $0$. Note that the second equality holds because 
$\DS \sum_n \int \phi_n\,d\mu_{\lambda_0+\epsilon}$ is absolutely convergent. 
\end{proof}

\subsection{Proof of Proposition \ref{Proposition: Non-diff example; Intermediate Step}}\label{Subsection: Proof of Main Proposition}
In this subsection we prove Proposition \ref{Proposition: Non-diff example; Intermediate Step}. We start by deriving a formula of $h'_\phi(\lambda)$ if $\phi$ is $C^1$ in \S \ref{Subsubsection: Linear Response with C1 Observable}, which will allow us to manipulate the term $\DS h_{\phi_n}'(\lambda)$ in Proposition \ref{Proposition: Non-diff example; Intermediate Step}. We will then proceed to finish the proof.

\subsubsection{Linear Response for $C^1$ Observables}\label{Subsubsection: Linear Response with C1 Observable}

Recall that $\mu_\lambda$ is the law of
     $\DS X(\lambda)\!\!=\!\! \sum_{m=1}^\infty a_m \lambda^{m-1} $, $\lambda\in(0,1)$
    where the $a_m$'s are i.i.d. random variables taking values in $\{-1,1\}$ with probability $\{\frac{1}{2},\frac{1}{2}\}$. In particular, for any fixed $a_i$'s, the right hand side is analytic in $\lambda$. To emphasize the dependence of $X(\lambda)$ on $a_i$'s, we write it as $\DS X(\lambda) = X(\lambda;a)$, where $a = (a_1,a_2,\cdots)\in\Sigma:=\{-1,1\}^{\mathbb{N}}$.
     
     Let $\nu$ be the $(\frac{1}{2},\frac{1}{2})$-Bernoulli measure on the symbolic space $\Sigma$. Then by definition, $\DS \mu_\lambda = \left(X(\lambda;\cdot)\right)_{*}\nu$, and therefore
     $$h_\phi(\lambda) = \int_\mathbb{R} \phi(x)\,d\mu_{\lambda}(x) = \int_{\Sigma}\phi(X(\lambda;a)) \,d\nu(a).$$

     This enables us to write down an explicit formula for the derivative if $\phi$ is $C^1$:
    \begin{lm}\label{Lemma: Linear Response for C1 observables}
        Suppose $\phi\in C^1$, then for any $\lambda_0\in (0,1)$, $h_\phi$ is differentiable at $\lambda_0$ with
        $$h'_\phi(\lambda_0) = \int_\Sigma  \phi'(X(\lambda_0;a)) X^{(1)}(\lambda_0;a)\,d\nu(a). $$

        Here $\DS X^{(1)}(\lambda;a):= \sum_{m\ge 1} m a_{m+1}\lambda^{m-1}$ is the formal derivative $\frac{\partial X(\lambda;a)}{\partial\lambda}$.
    \end{lm}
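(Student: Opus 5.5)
The plan is to write $h_\phi$ as an integral over the symbolic space, $h_\phi(\lambda)=\int_\Sigma \phi(X(\lambda;a))\,d\nu(a)$, as noted just before the statement, and to differentiate under the integral sign using dominated convergence. First fix $\lambda_0\in(0,1)$ and choose $\delta>0$ with $[\lambda_0-\delta,\lambda_0+\delta]\subset(0,1)$. Put $r:=\lambda_0+\delta<1$. For every $a\in\Sigma$ and every $\lambda$ in this interval we have
\[
|X(\lambda;a)|\le \sum_{m\ge1} r^{m-1}=\frac{1}{1-r}.
\]
Likewise the termwise derivative series $\sum_{m\ge1}(m-1)a_m\lambda^{m-2}=\sum_{m\ge1} m a_{m+1}\lambda^{m-1}=X^{(1)}(\lambda;a)$ is dominated by $\sum_m m r^{m-1}=(1-r)^{-2}$. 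Both series therefore converge uniformly in $\lambda$ on the interval, with bounds uniform in $a$. By the standard theorem on termwise differentiation of uniformly convergent series of $C^1$ functions, $\lambda\mapsto X(\lambda;a)$ is $C^1$ on the interval with derivative $X^{(1)}(\lambda;a)$. The same bounds show that $X^{(1)}(\cdot\,;a)$ is continuous there.

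Next, for $0<|\epsilon|<\delta$, apply the mean value theorem to the $C^1$ function $\lambda\mapsto\phi(X(\lambda;a))$. This gives some $\eta=\eta(\epsilon,a)$ between $\lambda_0$ and $\lambda_0+\epsilon$ with
\[
\frac{\phi(X(\lambda_0+\epsilon;a))-\phi(X(\lambda_0;a))}{\epsilon}=\phi'(X(\eta;a))\,X^{(1)}(\eta;a).
\]
Since $X(\eta;a)\in[-(1-r)^{-1},(1-r)^{-1}]$ and $\phi'$ is continuous, $|\phi'|$ is bounded there by some constant $C_\phi$. The difference quotients are therefore bounded by $C_\phi(1-r)^{-2}$, uniformly in $a$ and $\epsilon$. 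This constant is integrable because $\nu$ is a probability measure.

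Pointwise in $a$, the difference quotient converges to $\phi'(X(\lambda_0;a))X^{(1)}(\lambda_0;a)$ as $\epsilon\to0$, by the chain rule (equivalently, by continuity of $\phi'$, $X$ and $X^{(1)}$ in $\lambda$). Dominated convergence along any sequence $\epsilon_k\to0$ then gives
\[
\frac{h_\phi(\lambda_0+\epsilon_k)-h_\phi(\lambda_0)}{\epsilon_k}\to\int_\Sigma \phi'(X(\lambda_0;a))X^{(1)}(\lambda_0;a)\,d\nu(a).
\]
Since the sequence is arbitrary, this proves both differentiability and the stated formula.

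There is no real obstacle here. The only point requiring care is that $\phi\in C^1$ does not give a global bound on $\phi'$. This is handled by the uniform bound on $X$ over a compact subinterval of $(0,1)$, which confines all evaluations of $\phi'$ to a compact set.
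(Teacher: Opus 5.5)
Your proof is correct and follows the same route as the paper: write $h_\phi(\lambda)=\int_\Sigma\phi(X(\lambda;a))\,d\nu(a)$ and differentiate under the integral sign. Both arguments rely on $X$ and $X^{(1)}$ being uniformly bounded for $\lambda$ near $\lambda_0$, which confines the evaluations of $\phi'$ to a compact set. Your mean value theorem plus dominated convergence step spells out the justification that the paper only sketches; in particular, you correctly place the uniform bound on the integrand (uniformly in $a$), not just on the integral.
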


\begin{proof}
We wish to switch the order of derivative and integration, so that
\begin{equation}\label{Equation: Switch the order of derivative and integration}
    \frac{d}{d\lambda}\left(\int_{\Sigma}\phi(X(\lambda;a)) \,d\nu(a)\right)|_{\lambda=\lambda_0} = \int_{\Sigma} \frac{d}{d\lambda} \left( \phi(X(\lambda;a))  \right)|_{\lambda=\lambda_0} \,d\nu(a).
\end{equation}  

    The lemma follows by expanding the right hand side by chain rule.

   To justify the above equality, note that $X(\cdot;\cdot)$ is uniformly bounded for all $a\in \Sigma$ and $\lambda$ near $\lambda_0$. Since $\phi'$ is continuous, it is also uniformly bounded on the domain of $X$. Moreover, $X^{(1)}$ is also uniformly bounded for all $a\in \Sigma$ and $\lambda$ near $\lambda_0$. It follows that
    $$\int_\Sigma  \phi'(X(\lambda;a)) X^{(1)}(\lambda;a)\,d\nu(a)$$
    is uniformly bounded for all $a\in \Sigma$ and $\lambda$ near $\lambda_0$. Thus equation \eqref{Equation: Switch the order of derivative and integration} is valid.
\end{proof}

We include two more technical lemmas needed later. The first is the formula for $h_\phi''(\lambda)$ provided $\phi$ is $C^2$:

\begin{lm}\label{Lemma: Linear response for C2 Observables}
      Suppose $\phi\in C^2$, then for any $\lambda_0\in (0,1)$, $h_\phi$ is twice differentiable at $\lambda_0$ with
        $$h''_\phi(\lambda_0) = \int_\Sigma  \phi''(X(\lambda_0;a)) \left(X^{(1)}(\lambda_0;a)\right)^2 + \phi'(X(\lambda_0;a))X^{(2)}(\lambda_0;a) \,d\nu(a). $$

        Here $\DS X^{(2)}(\lambda;a):= \sum_{m\ge 2} m(m-1) a_{m+1}\lambda^{m-2}$ is the formal derivative $\frac{\partial^2 X(\lambda;a)}{\partial\lambda^2}$.
\end{lm}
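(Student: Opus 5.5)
The plan is to apply Lemma \ref{Lemma: Linear Response for C1 observables} twice: first to $\phi$, and then to the $C^1$ function $\phi'$ together with a differentiated weight. By that lemma, for $\lambda$ in a small neighbourhood $U=(\lambda_0-\eta,\lambda_0+\eta)\subset(0,1)$,
$$h'_\phi(\lambda)=\int_\Sigma \phi'(X(\lambda;a))\,X^{(1)}(\lambda;a)\,d\nu(a).$$
Differentiating this in $\lambda$ at $\lambda_0$ will give the formula. The goal is to move $\frac{d}{d\lambda}$ inside $\int_\Sigma$. Pointwise in $a$, the integrand $g(\lambda;a):=\phi'(X(\lambda;a))X^{(1)}(\lambda;a)$ is differentiable in $\lambda$. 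This holds because $X(\cdot;a)$ and $X^{(1)}(\cdot;a)$ are power series with coefficients bounded by $m$ and $m(m-1)$, so they are real-analytic on $(0,1)$, and $\phi'\in C^1$. The chain and product rules give
$$\partial_\lambda g(\lambda;a)=\phi''(X(\lambda;a))\bigl(X^{(1)}(\lambda;a)\bigr)^2+\phi'(X(\lambda;a))\,X^{(2)}(\lambda;a).$$
Here $\partial_\lambda X^{(1)}=X^{(2)}$ is justified by termwise differentiation of a power series inside its radius of convergence.

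Next I need a domination bound uniform in $a\in\Sigma$ and $\lambda\in U$, with $\sup U=\lambda_1<1$. Each of $|X|$, $|X^{(1)}|$ and $|X^{(2)}|$ is bounded by the corresponding absolute series $\sum\lambda_1^{m-1}$, $\sum m\lambda_1^{m-1}$ and $\sum m(m-1)\lambda_1^{m-2}$, all of which are finite. So $X(\lambda;a)$ ranges in a fixed compact interval $J$. Since $\phi\in C^2$, both $\phi'$ and $\phi''$ are bounded on $J$. Hence $|\partial_\lambda g|\le C$ uniformly in $a$ and $\lambda\in U$.

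By the mean value theorem, the difference quotients $\frac{g(\lambda_0+\epsilon;a)-g(\lambda_0;a)}{\epsilon}$ are bounded by $C$ for every $a$ and every small $\epsilon$. They converge pointwise to $\partial_\lambda g(\lambda_0;a)$. Dominated convergence on the probability space $(\Sigma,\nu)$ then yields
$$h''_\phi(\lambda_0)=\int_\Sigma \partial_\lambda g(\lambda_0;a)\,d\nu(a),$$
which is exactly the stated formula. Measurability of the integrands in $a$ is automatic, since they are continuous in $a$ for the product topology by uniform convergence of the series.

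The main point needing care is the uniform domination. In particular, the interchange should be justified by the mean value theorem bound on difference quotients rather than merely by boundedness of the derivative integral, as in the proof of Lemma \ref{Lemma: Linear Response for C1 observables}. Everything else is routine.
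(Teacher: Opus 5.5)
Your proposal is correct and follows the route the paper indicates; the paper omits this proof as ``completely analogous'' to Lemma~\ref{Lemma: Linear Response for C1 observables}. You obtain $h'_\phi$ on a neighbourhood of $\lambda_0$ from that lemma, then differentiate under the integral once more, using uniform bounds on $X$, $X^{(1)}$, $X^{(2)}$ for $\lambda\le\lambda_1<1$ and on $\phi',\phi''$ over the compact range of $X$. Your mean value theorem plus dominated convergence step is a more precise version of the paper's ``uniformly bounded'' justification, and the only slip is cosmetic: the coefficients of $X$ and $X^{(1)}$ are bounded by $1$ and $m$, not $m$ and $m(m-1)$.
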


The proof is completely analogous to that of Lemma \ref{Lemma: Linear Response for C1 observables} and we omit it.

The second lemma says that $X^{(1)}$ is not too small if $X$ itself is large:

\begin{lm}\label{Lemma: X^(1) is not too small if X is large}
    For any $\delta\in(0,\frac{1}{2})$, there exists $\delta' = \delta'(\delta)>0$ satisfying the following. For any $\lambda\in(\delta,1-\delta)$ and $a\in \Sigma$, if $X(\lambda;a)>\frac{1}{1-\lambda}-\delta'$, then $X^{(1)}(\lambda;a)>\frac{1}{2}$.
\end{lm}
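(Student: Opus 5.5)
The idea is to show that if $X(\lambda;a)$ is within $\delta'$ of its maximal value $\frac{1}{1-\lambda}$, then the first several digits of $a$ must all equal $+1$. Those digits make the leading terms of $X^{(1)}(\lambda;a)$ positive and at least $1$, and the remaining tail can be made uniformly small for $\lambda\in(\delta,1-\delta)$.

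\textbf{Step 1 (choice of $N$).} Since $\lambda<1-\delta$, we have
$$\sum_{m\ge N} m\lambda^{m-1}\le \sum_{m\ge N} m(1-\delta)^{m-1}.$$
The right-hand side is the tail of a convergent series, so it tends to $0$ as $N\to\infty$. Fix $N\ge 2$, depending only on $\delta$, such that this tail is less than $\frac{1}{2}$.

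\textbf{Step 2 (choice of $\delta'$).} Set $\delta':=2\delta^{N}>0$, which depends only on $\delta$. Suppose $a_k=-1$ for some $k\le N+1$. Changing that single digit from $+1$ to $-1$ lowers the sum by $2\lambda^{k-1}$, so
$$X(\lambda;a)\le \frac{1}{1-\lambda}-2\lambda^{k-1}\le \frac{1}{1-\lambda}-2\lambda^{N}\le\frac{1}{1-\lambda}-2\delta^{N}.$$
Here we used $\lambda>\delta$. This contradicts the hypothesis $X(\lambda;a)>\frac{1}{1-\lambda}-\delta'$. Hence $a_1=\dots=a_{N+1}=+1$.

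\textbf{Step 3 (lower bound on $X^{(1)}$).} In $X^{(1)}(\lambda;a)=\sum_{m\ge1} m a_{m+1}\lambda^{m-1}$, the coefficients $a_{m+1}$ with $m\le N$ all equal $+1$ by Step 2. Keeping these terms and bounding the rest in absolute value gives
$$X^{(1)}(\lambda;a)\ \ge\ \sum_{m=1}^{N} m\lambda^{m-1}-\sum_{m> N} m\lambda^{m-1}\ \ge\ 1-\tfrac12=\tfrac12.$$
The first sum is at least $1$ because of its $m=1$ term, and the second is at most $\frac12$ by Step 1. Because $N\ge2$, the first sum also contains the strictly positive term $2\lambda$, so the inequality is strict: $X^{(1)}(\lambda;a)>\frac12$.

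The only point needing care is uniformity in $\lambda$. The upper bound $\lambda<1-\delta$ controls the tail in Step 1, and the lower bound $\lambda>\delta$ controls the size $2\lambda^{N}$ of the drop in Step 2. Both constants therefore depend only on $\delta$. The argument involves no real obstacle beyond this.
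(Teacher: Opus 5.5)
Your proof is correct and follows the paper's argument. Closeness of $X(\lambda;a)$ to $\frac{1}{1-\lambda}$ forces the leading digits of $a$ to be $+1$ (this uses $\lambda>\delta$), and the remaining tail of $X^{(1)}$ is bounded uniformly by $\sum_{m\ge N} m(1-\delta)^{m-1}<\frac12$; the only differences are the choices of constants (you take $\delta'=2\delta^N$ and force $N+1$ digits, the paper takes $\delta'=\delta^{m-1}/2$ and forces $m$ digits).
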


\begin{proof}
    Fix an $m\in\mathbb{N}$ such that $m\ge 10$ and that 
    $$\sum_{n=m}^\infty n(1-\delta)^{n-1}<\frac{1}{2}.$$

    Clearly $m$ only depends only on $\delta$. Let $\delta' = \frac{\delta^{m-1}}{2}$. We claim that the lemma holds with this choice of $\delta'$. 

    Suppose $X(\lambda;a)>\frac{1}{1-\lambda} -\delta'$. Observe that $a$ must satisfy $a_1 = a_2 = \cdots = a_{m}=1$. Otherwise
    \begin{align*}
       \frac{\delta^{m-1}}{2}=\delta'> \frac{1}{1-\lambda} - X(\lambda;a) 
       &  =\sum_{n=0}^\infty \lambda^n - \sum_{n=0}^\infty a_{n+1}\lambda^n\\
        =\sum_{n=0}^{m-1} (1-a_{n+1})\lambda^n  + \sum_{n=m}^\infty  (1-a_{n+1})\lambda^n
        &\ge\lambda^{m-1}>\delta^{m-1}
    \end{align*}
    a contradiction. It follows that 
    \begin{align*}
        X^{(1)}(\lambda;a) = \sum_{n=1}^\infty na_{n+1}\lambda^{n-1} 
        &= \sum_{n=1}^{m-1} na_{n+1}\lambda^{n-1} + \sum_{n=m}^{\infty} na_{n+1}\lambda^{n-1}\\
        \ge 1-\sum_{n=m}^{\infty} n\lambda^{n-1}
        &>1-\sum_{n=m}^\infty n(1-\delta)^{n-1}>\frac{1}{2}
    \end{align*}
    as desired.
\end{proof}
    
We are now ready to prove Proposition \ref{Proposition: Non-diff example; Intermediate Step}.

\begin{proof}
   For simplicity, we denote $\mu_{\lambda_0}$ by $\mu$. Suppose $\dim\mu = \alpha<1$ and fix $\beta\in\mathbb{R}$ with  $\alpha<\beta<1$. Given $\theta\in(0,1)$,  we wish to construct a H\"older continuous function with H\"older exponent $\theta$ so that $\lambda\mapsto\int\phi \,d\mu_{\lambda}$ is not differentiable at $\lambda_0$.

    By definition, there exists $A\subset \mathbb{R}$ so that $\mu(A)=1$ and $\dim_H (A) < \beta$. Without loss of generity, assume $A$ is contained in supp$\mu_{\lambda_0} = [-\frac{1}{1-\lambda},\frac{1}{1-\lambda_0}]$.

    Let $f_1(x) = \lambda_0 x-1,f_2(x) = \lambda_0x+1$. By self-similarity of $\mu$ (see \S \ref{Subsubsection:Self-similar measure lemma} for more details), it is not hard to see that for any $m\in\mathbb{N}$,  
    $$\mu\left(f_1^m(A)\right) = 2^{-m}$$
    where $f_2^m$ denotes $m$ copies of $f_2$ composed together.

    By Lemma \ref{Lemma: X^(1) is not too small if X is large}, there exists $\epsilon_0>0$ and $\delta'>0$ such that
    \begin{equation}\label{equation: X^(1) is not too small if X is large}
        \forall \lambda\in (\lambda_0-\epsilon_0,\lambda_0+\epsilon_0)\subset(0,1) ,\quad X(\lambda;a)>\frac{1}{1-\lambda} - 4\delta' \Rightarrow X^{(1)}(\lambda;a)>\frac{1}{2}.
    \end{equation}
    Moreover, it is clear that once such a pair $\epsilon_0,\delta'$ is chosen, one can further shrink $\epsilon_0$ to be arbitrarily small so that \eqref{equation: X^(1) is not too small if X is large} remains valid. It follows that one can fix an $m_0\in\mathbb{N}$ such that  
    $$\delta'> \frac{2\lambda_0^{m_0}}{1-\lambda_0}+ \frac{1}{1-\lambda_0-\epsilon_0}-\frac{1}{1-\lambda_0}.$$
    
    Define  $A_0 = f_2^{m_0} (A)$. Since $f_2$ is bi-Lipschitz, $\dim_H(A_0) = \dim_H(A)<\beta$. A straightforward computation shows that 
    \begin{equation}\label{equation:lower bound for A_0}
        \left[\frac{1}{1-\lambda_0-\epsilon_0}-\delta',\frac{1}{1-\lambda_0}\right]\subset A_0 \subset 
        \left[\frac{1}{1-\lambda_0} - \frac{2\lambda_0^{m_0}}{1-\lambda_0} , \frac{1}{1-\lambda_0}\right].
    \end{equation}

    By the definition of Hausdorff dimension, for any $n\in\mathbb{N}$, there exist $\delta_0 = \delta_0(n)>0$ so that for any $\delta\in (0,\delta_0)$, one can find a family of closed intervals $\{I_j\}$ satisfying \vskip2mm
  
        (1)     $\{I_j\}$ is an $\delta$-cover of $A_0$. That is,  $\DS A_0\subset \cup I_j$ and $|I_j|<\delta,\forall j$. Here $|I_j|$ stands for the diameter of $I_j$, which equals to its Lebesgue measure $m(I_j)$, since $I_j$'s are intervals. Moreover, by shrinking these intervals one can always assume that each $I_j$ is contained in the $\delta'$neighborhood of $A_0$: $\DS \sup\{ |x-y|: x\in A_0 , y\in  \bigcup_j I_j  \}<\delta'$.
        
         (2) $\DS \sum_{j=1}^\infty |I_j|^\beta<\frac{1}{n}.$\vskip2mm
        
Fix an $0<\delta<\delta_0(n)$ with
\begin{equation}
    \delta^{1-\beta} <  n^{-1-\frac{2\theta}{1-\theta}}
\end{equation}
and let $\{I_j\}$ be a family of closed intervals satisfying the above requirements. In particular, $\mu(\cup_{j=1}^{\infty} I_j) \ge \mu(A_0)= 2^{-m_0}=:C$. Pick a finite subfamily $\{I_j\}_{j=1}^N$ so that 
\begin{equation}\label{Condition on the subfamily intervals}
    \mu\left(\bigcup_{j=1}^{N} I_j\right) >\frac{3}{4}C.
\end{equation}
 Let $\phi_n$ be a $C^1$ function with the following properties.
\begin{enumerate}
    \item $ \phi(-\infty)=0 $
    \item $0\le \phi_n'\le 1$ and  $ \phi_n'=1$ on $\DS \bigcup_{j=1}^N I_j$.
    \item $\phi_n'$ is zero outside the $\delta'$-neighborhood of $\DS  \bigcup_{j=1}^N I_j$
    and $\int_{\mathbb{R}} \phi_n' (x)\,dx <2m(\cup_{j=1}^N I_j)$. Here $m$ is the (normalized) Lebesgue measure.
\end{enumerate}

Note that 
\begin{align}
    m\left(\bigcup_{j=1}^N I_j\right) \le \sum_{j=1}^N m(I_j) < \delta^{1-\beta} \sum_{j=1}^N |I_j|^{\beta} < \delta^{1-\beta} \sum_{j=1}^{\infty} |I_j|^{\beta} < n^{-\frac{2\theta}{1-\theta}} n^{-2}
\end{align}
by our choice of $\delta$. Thus
\begin{equation}\label{Sup norm Bound of phi_n}
    ||\phi_n||_{C^0} <2n^{-\frac{2\theta}{1-\theta}} n^{-2}.
\end{equation}

Next we would like to bound the $\theta$-H$\ddot{o}$lder norm of $\phi_n$. Define $\rho = \rho_n = n^{-\frac{2}{1-\theta}}$. Suppose $x\neq y \in \mathbb{R}$ are such that $|x-y|<\rho$, then
\begin{equation}
    \frac{|\phi_n(x)-\phi_n(y)|}{|x-y|^\theta} \le \frac{||\phi_n'||_{C^0} |x-y|}{|x-y|^\theta} \le |x-y|^{1-\theta} \le n^{-2}.
\end{equation}
    If $|x-y|\ge \rho$, then 
    \begin{equation}
           \frac{|\phi_n(x)-\phi_n(y)|}{|x-y|^\theta} \le  2\rho^{-\theta}  ||\phi_n||_{C^0} \le4 n^{\frac{2\theta}{1-\theta}} n^{-\frac{2\theta}{1-\theta}} n^{-2} = 4n^{-2}.
    \end{equation}

    Combining the two inequalities above we conclude 
    \begin{equation}\label{Bound on Holder Norm of phi_n}
         ||\phi_n||_{C^\theta} \le 4n^{-2}.
    \end{equation}
     We claim that $\{\phi_n\}$ satisfies the assumptions in Proposition \ref{Proposition: Non-diff example; Intermediate Step}. To record the dependence of the family of intervals $\{I_j\}_{j=1}^N$ on $n$ we denote them by $\{I_j^{(n)} \}_{j=1}^{N_n}$ till the end of proof.

     Clearly $\phi_n$'s are nonnegative and $C^1$. Since the bounds on the right hand side of \eqref{Sup norm Bound of phi_n}  and \eqref{Bound on Holder Norm of phi_n} are summable $\DS \phi:=\sum_{n=1}^{\infty}\phi_n$ is a well defined bounded $C^\theta$ function. Thus condition $(3)$ is verified.

     To verify condition $(1)$, note that by Lemma \ref{Lemma: Linear Response for C1 observables}, for any $n$ and any $\lambda\in(\lambda_0-\epsilon_0,\lambda_0+\epsilon_0)$,

\begin{equation}\label{Computation of derivative with C^1 observable}
    h_n'(\lambda) = \int_{\Sigma} \frac{d}{d\lambda} \phi_n(X(\lambda;a))\,d\nu(a) = \int_{\Sigma} \phi_n'(X(\lambda;a)) X^{(1)}(\lambda;a)\,d\nu(a).
\end{equation}

 Note that the support of $\phi_n'$ is contained in the $\delta'$-neighborhood of $\DS  \bigcup_{j=1}^N I_j$, hence the $2\delta'$-neighborhood of $A_0$. By \eqref{equation:lower bound for A_0}, if 
 $\phi_n'(X(\lambda;a))\!\!\neq\!\! 0$ then 
 $X(\lambda;a)\!\!>\!\! \frac{1}{1-\lambda_0}- \delta' - 2\delta'\!\!>\!\! \frac{1}{1-\lambda}-4\delta'.$ Thus $X^{(1)}(\lambda;a)\ge\frac{1}{2}>0$ by Lemma \ref{Lemma: X^(1) is not too small if X is large}. It follows that $h_n'(\lambda)$ is non-negative, because $\phi_n'$ is also non-negative.

Finally, we  verify condition $(2)$ and wish to bound $   h_n'(\lambda_0) = \int_{\Sigma} \phi_n'(X(\lambda_0;a)) X^{(1)}(\lambda_0;a)\,d\nu(a)$. By the argument above, the integrand is nonzero only in the region where 
$X^{(1)}(\lambda_0;a)\!\!>\!\!\frac{1}{2}$. Therefore,

\begin{align*}
           h_n'(\lambda_0) &= \int_{\Sigma} \phi_n'(X(\lambda_0;a)) X^{(1)}(\lambda;a)\,d\nu(a)
           \ge \int_{\{a\in\Sigma: X(\lambda_0;a)\in \text{supp}\phi_n'\}} \phi_n'(X(\lambda_0;a)) \frac{1}{2}\,d\nu(a)\\
           &\ge \frac{1}{2}\int_{\{a\in\Sigma: X(\lambda_0;a)\in \cup_{j=1}^N I_j\}} \phi_n'(X(\lambda_0;a)) \,d\nu(a) \\
           &=\frac{1}{2}\int_{\{a\in\Sigma: X(\lambda_0;a)\in \cup_{j=1}^N I_j\}} 1\,d\nu(a) = \frac{1}{2}\mu(\cup_{j=1}^N I_j) \ge \frac{3}{8}C.
     \end{align*}

Since $C\!\!>\!\!0$ is an absolute constant depending only on $\lambda_0$, $\DS \sum_n h_n'(\lambda_0)\!\!=\!\!\infty$ as desired. 
   \end{proof}

   \subsection{Proof of Lemma \ref{Lemma: One blowup implies genericity}}\label{Subsection: proof of lemma: one blow up implies genericity}

       For simplicity, we suppress the subscript and write $\lambda$ for $\lambda_0$. Without loss of generality, assume $\{\epsilon_n\}$ is such that
       \begin{equation}
           \left|\frac{h_\phi(\lambda+\epsilon_n) - h_\phi(\lambda)}{\epsilon_n}\right|>n^2.
       \end{equation}

       For each $n\in\mathbb{N}$, define
       $$B_n:= \left\{\psi\in C^\theta:  \left|\frac{h_\psi(\lambda+\epsilon_n) - h_\psi(\lambda)}{\epsilon_n}|>n\right|
       \right\}
       \quad\text{and}\quad
       G:= \bigcap_{N=1}^\infty \bigcup_{n\ge N} B_N.$$
Clearly $B_n$'s are open subsets of $C^\theta$, and $G$ is a nonempty $G_\delta$ set because $\phi\in G$. Moreover, any $\psi\in G$ satisfies that $h_\psi$ is not differentiable at $\lambda$. 

Therefore it suffices to show that $G$ is dense in $C^\theta$. To prove this fix any $f\in C^\theta$ and $\kappa>0$. We claim that there exists $0\le \kappa'\le\kappa$ such that $f+\kappa' \phi \in G$, which immediately implies density. Note that if $f\in G$ already then the claim is trivial. Thus we assume $f\notin G$.

By definition, $f\notin G$ means that there exists $N_0\in \mathbb{N}$ such that for all $n\ge N_0$, 

$$\left|\frac{h_f(\lambda+\epsilon_n) - h_f(\lambda)}{\epsilon_n}\right|\le n.$$

It follows that for any $\kappa'>0$ and $n\ge N_0$,

\begin{align*}
    \left|\frac{h_{f+\kappa' \phi}(\lambda+\epsilon_n) - h_{f+\kappa'\phi}(\lambda)}{\epsilon_n}\right| &\ge  
    -\left|\frac{h_{f}(\lambda+\epsilon_n) - h_{f}(\lambda)}{\epsilon_n}\right|+ 
    \kappa'\left|\frac{h_{\phi}(\lambda+\epsilon_n) - h_{\phi}(\lambda)}{\epsilon_n}\right|\\
    &\ge -n+ \kappa' n^2
\end{align*}
which will be greater than $n$ if $n\ge N_0$ is sufficiently large. This implies that $f+\kappa' \phi \in G$ for any $\kappa'>0$, and in particular the claim is true.

\section{Almost everywhere Non-differentiability }
\label{Section: Almost everywhere Non-differentiability}

\subsection{Overview}

Here we prove Theorem \ref{Theorem: Almost Everywhere Non-differentiability}.
The required function is constructed via a stochastic process, so that 
the increments of $\phi$ on disjoint cylinders inside the support of $\mu_{\lambda}$ is like a sum of independent random variables whose variance is not too small.

More precisely, we will construct a sequence $\{N_k\}_{k\in\mathbb{N}}$ such that for any fixed $\lambda \in (\delta,\frac{1}{2}-\delta)$, with probability one, the random function $\DS \phi:= \sum_{k=1}^{\infty} \phi_{N_k}$ will be $C^\alpha$ satisfying that $h_\phi$ is not differentiable at $\lambda$. Then by Tonelli-Fubini, for almost every $\phi$ constructed in this way, $h_\phi$ is not differentiable at Lebesgue almost everywhere $\lambda\in (\delta,\frac{1}{2})$.

The $C^\alpha$ part is simpler to arrange. We will choose suitable parameters (in terms of $\alpha$ and $\delta$) in the construction of $\phi_{N_k}$ as well as a  rapidly growing$\{N_k\}$ such that with probability one, 

$$ ||\phi_{N_k}||_{C^{\alpha}} :=\sup_{x\in[0,1]}|\phi_{N_k}(x)| + \sup_{x\neq y\in [0,1]} \frac{|\phi_{N_k}(x) - \phi_{N_k}(y)|}{|x-y|^{\alpha}}$$
is summable in $k$. 

To make $h_{\phi}$ non differentiable at $\lambda$, we further construct a sequence $\{\epsilon_k\}$ decreasing to $0$ such that with probability one, the main contribution of the term 
$\DS \left|\frac{h_{\phi}(\lambda+\epsilon_k) - h_\phi(\lambda)}{\epsilon_k}\right|$
comes from 
$\left|\frac{h_{\phi_{N_k}}(\lambda+\epsilon_k) - h_{\phi_{N_k}}(\lambda)}{\epsilon_k}\right|$
and the latter blows up to infinity. 

Here is the plan for the rest of this section. In \S \ref{subsection: Preliminary Lemmas} we list several technical lemmas needed for the proof. In  \S \ref{Subsubsection: Construction of phiNk}
we construct $\{\phi_{N_k}\}$ and describe how to choose $\{N_k\}$ and $\{\epsilon_k\}$ according to the parameters in its construction.  In \S \ref{Subsubsection: Proof of sufficiency of adapted pair}
we show how to deduce Theorem \ref{Theorem: Almost Everywhere Non-differentiability} once all those data are constructed. Finally in \S \ref{Subsection: Existence of Adapted Pair} we verify that such a construction can be done by an iterative scheme.

\label{SSNEps}



\subsection{Preliminary Lemmas}\label{subsection: Preliminary Lemmas}

In this section we collect the auxiliary results needed later for the proof. 

\subsubsection{Probability Lemmas}

\begin{lm}[Berry-Esseen \cite{esseen1942liapounoff}]\label{Lemma: Berry-Esseen}
    Let $\mu_1,\mu_2\cdots,\mu_k$ be probability measure on $\mathbb{R}$ with finite third moments $\gamma_i : =\int_\mathbb{R} |x|^3 \,d\mu_i(x)$. Let $\mu:= \mu_1*\cdots*\mu_k$ and $G$ the Gaussian measure with the same mean and variance with $\mu$. Then there exists some absolute constant $c>0$ such that, for any interval $I\subset\mathbb{R}$, 

    $$|\mu(I) - G(I)|\le c \frac{\sum_{i=1}^k \gamma_i}{\text{Var}(\mu)^{\frac{3}{2}}}.$$
\end{lm}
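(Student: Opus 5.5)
The plan is to follow the classical Fourier-analytic proof of Berry--Esseen, using Esseen's smoothing inequality and a comparison of characteristic functions. This lemma is imported from \cite{esseen1942liapounoff}, so the aim is only to check that the stated version, with raw absolute third moments and arbitrary intervals, follows from the standard one.

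\emph{Reductions.} Let $m_i$ be the mean of $\mu_i$ and $\sigma_i^2$ its variance. Translating $\mu_i$ by $-m_i$ translates both $\mu$ and $G$ by $-\sum m_i$, so $|\mu(I)-G(I)|$ is unchanged. By Jensen, $|m_i|^3\le\gamma_i$, hence the centred third moment satisfies
\[
\int|x-m_i|^3\,d\mu_i\le 4\Bigl(\gamma_i+|m_i|^3\Bigr)\le 8\gamma_i .
\]
So it suffices to treat centred $\mu_i$, at the cost of a factor $8$ in $c$. Rescaling, we may also assume $\sigma^2:=\text{Var}(\mu)=\sum\sigma_i^2=1$. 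Set $L:=\sum\gamma_i$ (now for the centred measures). If $L\ge L_0$ for some fixed small absolute constant $L_0$, the bound is trivial because $|\mu(I)-G(I)|\le 1$; so assume $L<L_0$. For an interval $I$, $\mu(I)-G(I)$ is a difference of two values of $F-\Phi$, where $F$ and $\Phi$ are the distribution functions of $\mu$ and $G$ (one-sided limits allowed, to cover open or closed endpoints). Hence it suffices to prove $\sup_x|F(x)-\Phi(x)|\le c'L$.

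\emph{Smoothing.} Esseen's inequality gives, for every $T>0$,
\[
\sup_x|F-\Phi|\le \frac1\pi\int_{-T}^{T}\frac{|\hat\mu(t)-e^{-t^2/2}|}{|t|}\,dt+\frac{24}{\pi\sqrt{2\pi}\,T}.
\]
Take $T=1/(4L)$; the boundary term is then $O(L)$. Next, $\hat\mu(t)=\prod_i\hat\mu_i(t)$, and Taylor's theorem gives
\[
\hat\mu_i(t)=1-\tfrac12\sigma_i^2t^2+\theta\,\tfrac16\gamma_i|t|^3,\qquad |\theta|\le1 .
\]
Lyapunov's inequality gives $\sigma_i^3\le\gamma_i\le L$, so each $\sigma_i$ is uniformly small. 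This is what makes the individual factors close to $1$ on a useful range of $t$.

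\emph{Main obstacle: bounding the integrand on $|t|\le T$.} I would split into two ranges.

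For $|t|\le L^{-1/3}$, I would take logarithms of the factors. Each $|\hat\mu_i(t)-1|\le \frac12\sigma_i^2t^2\le\frac12$, so the logarithms are defined, and $\sum_i\log\hat\mu_i(t)=-\frac{t^2}{2}+O(L|t|^3)$. This yields
\[
|\hat\mu(t)-e^{-t^2/2}|\le C L|t|^3e^{-t^2/4}.
\]

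For $L^{-1/3}\le|t|\le T$, I would use the symmetrization trick. Let $\tilde\mu_i=\mu_i*\check\mu_i$, where $\check\mu_i$ is the reflection of $\mu_i$; then $|\hat\mu_i(t)|^2=\hat{\tilde\mu}_i(t)$. Since $\tilde\mu_i$ has variance $2\sigma_i^2$ and third moment at most $8\gamma_i$,
\[
|\hat\mu_i(t)|^2\le \exp\Bigl(-\sigma_i^2t^2+\tfrac43\gamma_i|t|^3\Bigr).
\]
Taking the product and using $L|t|\le\frac14$ gives $|\hat\mu(t)|\le e^{-t^2/3}$. Therefore both $|\hat\mu(t)|$ and $e^{-t^2/2}$ are at most $e^{-t^2/3}$, and on this range $e^{-t^2/3}\le L|t|^3e^{-t^2/3}$ because $L|t|^3\ge1$.

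Combining the two ranges, $\int_{-T}^{T}|t|^{-1}|\hat\mu(t)-e^{-t^2/2}|\,dt\le C''L$. Together with the boundary term this gives $\sup_x|F-\Phi|\le c'L$. Undoing the normalization, $L$ becomes $\sum\gamma_i/\text{Var}(\mu)^{3/2}$, which is the claimed bound.
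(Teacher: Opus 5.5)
Your argument is correct, and it is the standard Fourier-analytic proof that underlies the result: Esseen's smoothing inequality, then a logarithmic comparison for $|t|\le L^{-1/3}$, then symmetrization for $L^{-1/3}\le |t|\le 1/(4L)$. The paper itself gives no proof and simply imports the lemma from \cite{esseen1942liapounoff}. The paper's formulation differs from the textbook one only in using raw rather than centred third moments and arbitrary intervals. Your reductions handle both correctly at the cost of an absolute constant: $|x-m_i|^3\le 4(|x|^3+|m_i|^3)$ with Jensen for the moments, and writing $\mu(I)-G(I)$ as a difference of one-sided values of $F-\Phi$ for the intervals.
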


\begin{lm}[Azuma\cite{azuma1967weighted}-Hoeffding\cite{hoeffding1963probability}]\label{Lemma: Azuma-Hoeffding}
    Suppose $\{S_k:k=0,1,2,\cdots\}$ is a martingale and $\DS |S_k-S_{k-1}|\le c_k, \forall k\ge 1$ alsmot surely. Then for any positive integer $N$ and $\epsilon>0$, 

    $$\mathbb{P}(|S_N-S_0|\ge \epsilon )\le 2\exp\left(-\frac{\epsilon^2}{\sum_{k=1}^N c_k^2}\right).$$
\end{lm}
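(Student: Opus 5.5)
The plan is the standard exponential-moment (Chernoff) argument, applied to the martingale differences $D_k := S_k - S_{k-1}$. These satisfy $\mathbb{E}[D_k\mid\mathcal{F}_{k-1}]=0$ and $|D_k|\le c_k$ almost surely.

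\emph{Step 1 (conditional Hoeffding lemma).} For every $t\in\mathbb{R}$ I will show
$$\mathbb{E}\bigl[e^{tD_k}\mid \mathcal{F}_{k-1}\bigr]\le \exp\Bigl(\tfrac{t^2c_k^2}{2}\Bigr).$$
By convexity of $x\mapsto e^{tx}$ on $[-c_k,c_k]$,
$$e^{tx}\le \tfrac{c_k-x}{2c_k}e^{-tc_k}+\tfrac{c_k+x}{2c_k}e^{tc_k}.$$
Taking conditional expectations kills the linear term, which leaves $\cosh(tc_k)$. Comparing Taylor series, $(2j)!\ge 2^j j!$, gives $\cosh(u)\le e^{u^2/2}$.

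\emph{Step 2 (iterate the tower property).} Conditioning successively on $\mathcal{F}_{N-1},\mathcal{F}_{N-2},\dots$ gives
$$\mathbb{E}\, e^{t(S_N-S_0)}\le \exp\Bigl(\tfrac{t^2}{2}\sum_{k=1}^N c_k^2\Bigr).$$

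\emph{Step 3 (Markov and optimize).} Markov's inequality gives
$$\mathbb{P}(S_N-S_0\ge\epsilon)\le \exp\Bigl(-t\epsilon+\tfrac{t^2}{2}\textstyle\sum c_k^2\Bigr).$$
Choosing $t=\epsilon/\sum c_k^2$ bounds this by $\exp\bigl(-\epsilon^2/(2\sum c_k^2)\bigr)$. The same argument applied to the martingale $-S_k$ bounds the lower tail, and a union bound produces the factor $2$.

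\emph{Main obstacle.} The step I expect to fail is reaching the exponent exactly as worded, $-\epsilon^2/\sum c_k^2$ without the factor $\tfrac12$. Step 1 is sharp: when $D_k=\pm c_k$ with equal probability, $\cosh(tc_k)$ is attained, so no better moment bound is available. In fact the worded form cannot hold in general. Take $N=1$, $c_1=1$, $\epsilon=1$ and $S_1-S_0=\pm1$ with equal probability. Then the left side is $1$, while $2e^{-1}<1$.

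So the argument above establishes the lemma with $\sum c_k^2$ replaced by $2\sum c_k^2$ in the exponent, and this is the version I would actually record. For the later applications (summability of tail probabilities over a rapidly growing $\{N_k\}$), only the Gaussian-type decay in $\epsilon^2/\sum c_k^2$ matters. The absolute constant is harmless there and can be absorbed into the choice of parameters in \S\ref{Subsubsection: Construction of phiNk}.
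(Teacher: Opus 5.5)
Your proof is correct and is the standard argument behind the results of Azuma and Hoeffding that the paper cites without proof: conditional Hoeffding lemma, then the tower property, then the Chernoff bound. Your diagnosis of the constant is also right. As worded the lemma is false, since your example $S_1-S_0=\pm 1$ with $c_1=\epsilon=1$ gives $1>2e^{-1}$, and the valid bound is $2\exp\bigl(-\epsilon^2/(2\sum_{k=1}^N c_k^2)\bigr)$. Both applications in the proof of Proposition~\ref{Proposition: Holder norm is small with probability 1} already put $2lN^{2\theta_1-2}$ (respectively $2N\cdot N^{2\theta_1-2}$) in the denominator, so they use exactly the version you prove and nothing downstream is affected.
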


\subsubsection{Calculus Lemma}

\begin{lm}\label{Lemma: bounding holder norm by C0 and C1 norm}
    Let $f:\mathbb{R}\to\mathbb{R}$ be a bounded $C^1$ function with bounded derivative. Define 
    $\DS |f|_{C^0}:= \sup_{x\in\mathbb{R}} |f(x)|$ and 
    $\DS |f|_{C^\alpha}:= \sup_{x\neq y\in\mathbb{R}}\frac{|f(x)-f(y)|}{|x-y|^\alpha}$. Then $\DS |f|_{c^\alpha} \le 2|f'|^{\alpha}_{C^0}|f|^{1-\alpha}_{C^0}$.
\end{lm}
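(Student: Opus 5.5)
The plan is the standard interpolation argument: bound $|f(x)-f(y)|$ in two ways and switch between them at the scale where they balance. Write $A:=|f'|_{C^0}$ and $B:=|f|_{C^0}$. If $A=0$ then $f$ is constant and there is nothing to prove. If $B=0$ then $f\equiv 0$. So we assume $A,B>0$ and fix $x\neq y$, with $t:=|x-y|$.

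The two bounds are as follows. The mean value theorem gives $|f(x)-f(y)|\le At$. The triangle inequality gives $|f(x)-f(y)|\le 2B$. Hence
\[
\frac{|f(x)-f(y)|}{t^\alpha}\le \min\left(At^{1-\alpha},\,2Bt^{-\alpha}\right).
\]

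We split at the natural scale $t_0:=B/A$.

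When $t\le t_0$, we use the first bound. Since $1-\alpha\ge 0$, it gives
\[
At^{1-\alpha}\le A(B/A)^{1-\alpha}=A^\alpha B^{1-\alpha}.
\]

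When $t> t_0$, we use the second bound. Since $-\alpha\le 0$, it gives
\[
2Bt^{-\alpha}\le 2B(B/A)^{-\alpha}=2A^\alpha B^{1-\alpha}.
\]

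In either case the quotient is at most $2A^\alpha B^{1-\alpha}$. Taking the supremum over $x\neq y$ yields $|f|_{C^\alpha}\le 2|f'|_{C^0}^\alpha|f|_{C^0}^{1-\alpha}$, which is the claim.

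There is no real obstacle here. The only things to watch are the degenerate cases $A=0$ or $B=0$, and the monotonicity of $t^{1-\alpha}$ and $t^{-\alpha}$ for $\alpha\in(0,1)$. The resulting constant $2$ is what the statement asks for.
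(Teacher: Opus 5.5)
Your proof is correct and is essentially the same as the paper's: the paper also splits at the scale $\rho=|f|_{C^0}/|f'|_{C^0}$, using the mean value theorem when $|x-y|\le\rho$ and the trivial bound $2|f|_{C^0}$ when $|x-y|>\rho$. Your treatment of the degenerate case $|f'|_{C^0}=0$ matches the paper's reduction to non-constant $f$.
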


\begin{proof}
    If $f$ is a constant function, then the conclusion is trivially true. Therefore we assume $f$ is non-constant, and thus $|f'|_{C^0}>0$. Let $\rho:= \frac{|f|_{C^0}}{ |f'|_{C^0}}>0$. For any $x\neq y \in\mathbb{R}$, if $|x-y|>\rho$, then
    $$\frac{|f(x)-f(y)|}{|x-y|^\alpha}<2|f|_{C^0} \rho^{-\alpha} = 2|f'|^{\alpha}_{C^0}|f|^{1-\alpha}_{C^0}.$$

If $|x-y|\le \rho$, then 
$$\frac{|f(x)-f(y)|}{|x-y|^\alpha} \le \frac{|x-y|\left(\sup_{t\in[y,x]}|f'(t)|\right)}{|x-y|^\alpha} \le |f'|_{C^0} \rho^{1-\alpha} =|f'|^{\alpha}_{C^0}|f|^{1-\alpha}_{C^0}$$
and the lemma is proved.
\end{proof}

\subsubsection{Self-similar Measure Lemma}\label{Subsubsection:Self-similar measure lemma}

The purpose of this section is to show that the $\mu_{\lambda}$-measure of certain sets cannot be too small. To do that we need to view $\mu_{\lambda}$ as the stationary measure of an iterated function system.

More precisely, let $\lambda\in(0,1)$, $ f^\lambda_1(x) = \lambda x-1$ and $f_2^\lambda(x) = \lambda x+1$. One can form a Markov chain with phase space $\mathbb{R}$, and the transition rule being that for any $x\in\mathbb{R}$, with probability $\frac{1}{2}$ it moves to $f_1^\lambda(x)$ and probability $\frac{1}{2}$ to $f_2^\lambda(x)$. $\mu_{\lambda}$ is the unique stationary measure of this Markov chain, whose support $\Lambda_\lambda$ is a Cantor set if $\lambda\in (0,\frac{1}{2})$, and is the interval $[-\frac{1}{1-\lambda}
,\frac{1}{1-\lambda}]$ if $\lambda\in [\frac{1}{2},1)$.

In this section we are interested in the case $\lambda<\frac{1}{2}$, where $\mu_{\lambda}$ has the following property. Let $K_\lambda: =[-\frac{1}{1-\lambda},\frac{1}{1-\lambda}]$. For each $m\in\mathbb{N}$, define $\Sigma_m:= \{1,2\}^m $ to be the space of words of length $m$. For any $w = (w_1,\cdots,w_m)\in\Sigma_m$, define $\DS C_w := f_w^\lambda(K_\lambda) = f^\lambda_{w_1}\circ \cdots\circ f^\lambda_{w_m} (K_\lambda)$. We call $C_w$ a \textbf{level $m$ cylinder} of $\mu_{\lambda}$. It is easy to see that the level-$m$ cylinders are pairwise disjoint, and $\mu_{\lambda}(C_w) =\frac{1}{2^m}, \forall w\in \Sigma_m$. 

We are ready to state the main result of this subsection. The assumptions are motivated by the proof of Proposition \ref{Proposition: Derivative is large with Probability 1}.

\begin{lm}\label{Lemma: mulambda measure of certain certain sets cannot be small}
    Fix any $\delta\in(0,\frac{1}{4})$, there exists a $\rho' = \rho'(\delta)>0$ such that the following holds. For any $\lambda\in (\delta,\frac{1}{2}-\delta)$, any $m\in \mathbb{N}$ and any level-$m$ cylinder $C$, suppose $Q\subset C$ is a union of at most $L: = [\frac{1}{2\delta}]+1$ intervals whose total Lebesgue measure is less than $\rho' |C| = \rho' \frac{\lambda^m}{1-\lambda}$. Then 
    $$\mu_{\lambda}( Q) \le \frac{1}{2} \mu_\lambda(C).$$
\end{lm}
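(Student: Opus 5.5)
The plan is to use self-similarity to reduce to level $0$, and then exploit the gaps of the Cantor set at a fixed finite depth.

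\emph{Reduction.} The map $f_w^\lambda$ is affine with ratio $\lambda^m$ and sends $K_\lambda$ onto $C=C_w$. Self-similarity gives, for Borel $B\subset C$,
$$\mu_\lambda(B)=2^{-m}\mu_\lambda\big((f_w^\lambda)^{-1}(B)\big).$$
Indeed, the level-$m$ cylinders are disjoint, $\mu_\lambda=2^{-m}\sum_{|v|=m}(f_v^\lambda)_*\mu_\lambda$, and $(f_v^\lambda)_*\mu_\lambda$ is supported in $C_v$. So $Q':=(f_w^\lambda)^{-1}(Q)\subset K_\lambda$ is again a union of at most $L$ intervals, with total length less than $\rho'|K_\lambda|=\rho'\frac{2}{1-\lambda}$. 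It therefore suffices to treat $m=0$, $C=K_\lambda$: we must show $\mu_\lambda(Q)\le \frac12$ whenever $Q\subset K_\lambda$ is a union of at most $L$ intervals of total length less than $\rho'|K_\lambda|$, uniformly in $\lambda\in(\delta,\frac12-\delta)$.

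\emph{Level-$k$ argument.} Fix a depth $k=k(\delta)$ with $L\,2^{-k}\le \frac12$; for instance $k=\lceil\log_2(2L)\rceil$, which depends only on $\delta$. The $2^k$ level-$k$ cylinders are disjoint intervals of length $\frac{2\lambda^k}{1-\lambda}$, each of $\mu_\lambda$-measure $2^{-k}$. Consecutive cylinders are separated by gaps of length at least $\frac{2}{1-\lambda}\big(1-2\lambda\big)\lambda^{j}\ge \frac{2}{1-\lambda}\,2\delta\,\lambda^{k-1}$ for some $j\le k-1$. This uses $1-2\lambda>2\delta$: the gap between $f_1(K)$ and $f_2(K)$ is $2-\frac{2\lambda}{1-\lambda}=\frac{2(1-2\lambda)}{1-\lambda}$, and deeper gaps are scaled copies of it.

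Choose $\rho'$ so small that $\rho'\frac{2}{1-\lambda}$ is smaller than both the minimal gap and the length of a level-$k$ cylinder, uniformly for $\lambda>\delta$. A valid choice is $\rho'=\delta^{k}\cdot 2\delta\cdot\frac14$; this is the only place where uniformity in $\lambda$ matters, since $\lambda^k\ge\delta^k$. Then each of the at most $L$ intervals composing $Q$ has length less than the minimal gap, so it meets at most one level-$k$ cylinder. Hence $Q$ meets at most $L$ level-$k$ cylinders, and
$$\mu_\lambda(Q)\le L\,2^{-k}\le\tfrac12.$$

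\emph{Main obstacle.} The delicate step is making the constants uniform in $\lambda$ and in $m$. The reduction handles $m$ exactly. Uniformity in $\lambda$ comes from the lower bound $\lambda>\delta$, which controls cylinder lengths, together with $\frac12-\lambda>\delta$, which controls the gaps. I would double-check the gap formula at each level. The gaps inside $K_\lambda$ at level $j+1$ are images of the level-$1$ gap under maps of ratio $\lambda^j$, so the minimal gap among level-$k$ cylinders is $\frac{2(1-2\lambda)}{1-\lambda}\lambda^{k-1}$. This is what makes the counting argument above legitimate.
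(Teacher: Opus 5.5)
Your proof is correct and follows essentially the same route as the paper: reduce to $m=0$ by self-similarity, then count the level-$k$ cylinders met by $Q$ using the uniform gap bound $\frac{2(1-2\lambda)\lambda^{k-1}}{1-\lambda}\ge\frac{4\delta^{k}}{1-\lambda}$. The only difference is cosmetic. The paper lets each interval $Q_j$ meet up to $1+|Q_j|/d_l$ cylinders and takes $2^l\ge 4L$, whereas you shrink $\rho'$ so that each interval meets at most one cylinder and take $2^k\ge 2L$.
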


\begin{proof}
    By self-similarity, we can assume $m=0$ and $C = [-\frac{1}{1-\lambda},\frac{1}{1-\lambda}]$ without loss of generality.

Take $l = l(\delta) = [\log_2(4L)]+1$, and $\rho' = \rho'(\delta) = \frac{1}{2} L\delta^l$. We claim that this will meet the requirement.

Suppose $\DS Q = \bigcup_{j=1}^r Q_j$ is a disjoint union of $r\le L$ intervals, with 
$\DS \sum_{j=1}^r |Q_j| \le \rho' \frac{1}{1-\lambda}$. Here $|\cdot|$ is the Lebesgue measure. Consider the level-$l$ cylinders of $\mu_\lambda$. And denote by $n_j$ the number of level-$l$ cylinders that has nonempty intersection with $Q_j, j=1,\cdots, r$. Let $\DS n:=\sum_{j=1}^r n_j$.

Note that for any two distinct level-$l$ cylinders, the distance between them is at least $\DS d_l :=  \frac{2\lambda^{l-1} (1-2\lambda)}{1-\lambda}$. Therefore, $\DS |Q_j|\ge (n_j-1) d_l$, and it follows that

\begin{align*}
    \rho' \frac{1}{1-\lambda} &\ge \sum_{j=1}^r |Q_j| \ge \sum_{j=1}^r (n_j-1) d_l \ge (n-r)d_l \ge (n-L)d_l.
\end{align*}
    This implies that 

    $$n\le \frac{\rho'}{(1-\lambda)d_l}+L = \frac{\frac{1}{2} L\delta^l}{2\lambda^{l-1}(1-2\lambda)}+L < \frac{\frac{1}{2} L\delta^l}{2\delta^{l-1}2\delta} +L<2L.$$

    Thus

    $$\mu_\lambda(Q) \le n \frac{1}{2^l} < 2L \frac{1}{4L} = \frac{1}{2}$$
    as desired. 
    \end{proof}

\subsection{Construction of $\phi_{N_k}$}\label{Subsubsection: Construction of phiNk}

This subsection contains several technical definitions in preparation for the construction.

   We start by fixing a family $\{g_\rho\}_{\rho\in(0,1)}$ of $C^2$ functions such that for each $\rho$, $g_\rho: [0,2]\to\mathbb{R}_{\ge 0}$ satisfies the following

    \begin{enumerate}
        \item supp$g_\rho \subset [\frac{1-\rho}{3}, 2-\frac{1-\rho}{3}]$,
        \item $g_\rho$ is constantly equal to $1$ on the interval $[1-\rho, 1+\rho]$.
    \end{enumerate}

So the shape of $g_\rho$ is a bump function concentrated in the interval $[0,2]$, and the middle $\rho$-proportion is constantly $1$.

\begin{df}\label{Definition: Generalized Brownian Motion}
    Fix $\rho\in (0,1)$, $\theta_1,\theta_2>0$ and $N\in\mathbb{N}$.
Divide $ [0,2]$ evenly into $N$ sub-intervals 
$I_j^{(N)} = \left[\frac{2(j-1)}{N},\frac{2j}{N}\right],\;\; j=1,\cdots,N$ and define two random functions $g_{N,\rho},\phi_{N,\rho}$ in the following way. First, for any $j\in \{1,2,\cdots,N\}$ and any $x\in I_j^{(N)}$,

$$g_{N,\rho}(x)= \begin{cases}
    N^{\theta_1}g_\rho (Nx-j+1), \quad \text{prob}=N^{-\theta_2}; \\
    -N^{\theta_1}g_\rho (Nx-j+1), \; \text{prob}=N^{-\theta_2};\\
     0, \qquad \qquad\qquad \qquad  \; \;\,  \text{prob}=1-2N^{-\theta_2},
\end{cases}  $$
and the outcomes of $g_{N,\rho}$ on the $I_j^{(N)}$'s are independent. Note that by construction, $g_{N,\rho}$ is always equal to $0$ on the intervals $\DS \left[\frac{2(j-1)}{N} + \frac{2(1-\rho)}{3N}, \frac{2j}{N} - \frac{2(1-\rho)}{3N}\right] , j=1,\cdots,N$, and thus it is a random $C^2$ function.

Next define
$\phi_{N,\rho}(x):= \int_0^x g_{N,\rho}(t)\,dt$  for $x\in [0,2]$
and let $\phi_{N, \rho}$ take constant values to left of 0 and to the right of 2.
We call $\phi_{N,\rho}$ a \textbf{generalized Brownian Motion} at scale $N$ with parameters $\theta_1,\theta_2,\rho$.
\end{df}

\begin{df}\label{Definition of adapted pair}
    Given $\delta\in(0,\frac{1}{4})$, $\alpha\in(0,1)$, $\theta_1,\theta_2>0$ and $\beta>0$, let $\rho = \rho(\delta) =1- \frac{1}{100} \frac{\delta\rho'}{2-\rho'}$ where $\rho' = \frac{1}{2}(\frac{1}{2\delta}+1) \delta^{\log_2(\frac{2}{\delta}+4)+1}$ (this choice will be explained in Lemma \ref{Lemma: Estimate of Variance}).
    
    Let $\{N_k\}$ be a sequence of increasing natural numbers, and $\{\epsilon_k\}$ be a sequence of positive reals decreasing to $0$. We say that $\DS (\{N_k\},\{\epsilon_k\})$ is an \textbf{adapted pair} to $(\delta,\alpha,\theta_1,\theta_2,\beta)$ if the following holds:
\begin{enumerate}
    \item Let $\phi_{N_k,\rho}$ be a generalized Brownian motion at scale $N_k$ with parameters $\theta_1,\theta_2,\rho$, and suppose $\phi_{N_k,\rho}$'s are independent. Then for any $\lambda\in (\delta,\frac{1}{2}-\delta)$,
\begin{equation} \label{Condtion on Nk that has prob one}
     \mathbb{P} \left(\exists k_0\in\mathbb{N} \text{ s.t. } \forall k\ge k_0,\,||\phi_{N_k,\rho}||_{C^\alpha}\le N_k^{-\beta},\, \left|\int_\Sigma  \phi'_{N_k.\rho}(X(\lambda;a))X^{(1)}(\lambda;a) \,d\nu(a)\right| \ge N_k^\beta   \right) =1.
\end{equation}

\item For each integer $n\ge 2$, $\DS \sum_{l=1}^{n-1} N_l^{\theta_1} < N_n^{\frac{\beta}{2}}$.

\item For any integer $n\ge 1$, $\epsilon_n$ is so small that for any $\epsilon$ with $|\epsilon|\le\epsilon_n$,  $\lambda\in(\delta,\frac{1}{2}-\delta)$ and any outcome of $\phi_{N_n,\rho}$ 
satisfying $\DS |\int_\Sigma  \phi'_{N_n,\rho}
(X(\lambda;a))X^{(1)}(\lambda;a) \,d\nu(a)| \ge N_n^\beta $, we have 

$$h_{N_n}(\lambda+\epsilon) - h_{N_n}(\lambda) = \epsilon \int_\Sigma  \phi'_{N_n}(X(\lambda;a))X^{(1)}(\lambda;a) \,d\nu(a) +R_n(\lambda;\epsilon)$$
with
$$\left|\frac{R_n(\lambda;\epsilon)}{\epsilon}\right|\le 
\frac{1}{100}\left|\int_\Sigma  \phi'_{N_n,\rho}(X(\lambda;a))X^{(1)}(\lambda;a) \,d\nu(a)\right| .$$
Here  $\DS h_{N_n}(\lambda):= \int \phi_{N_n,\rho}(x)\,d\mu_{\lambda}(x)$.

\item For any integer $n\ge 1$, $\DS \sum_{l=n+1}^\infty N_l^{-\beta} \le \frac{\epsilon_n}{100}$.
\end{enumerate}

\end{df}

     \subsection{Existence of Adapted Pair implies Main Theorem}\label{Subsubsection: Proof of sufficiency of adapted pair}

\begin{prop}\label{Proposition: Necessary condition for Almost Everywhere non-diff}

     Given $\delta\in(0,\frac{1}{4})$, $\alpha\in(0,1)$, $\theta_1,\theta_2>0$ and $\beta>0$, let $\DS \rho = \rho(\delta) =1- \frac{1}{100} \frac{\delta\rho'}{2-\rho'}$ where $\rho' = \frac{1}{2}(\frac{1}{2\delta}+1) \delta^{\log_2(\frac{2}{\delta}+4)+1}$. Suppose that there exists an adapted pair $\DS (\{N_k\},\{\epsilon_k\})$ to $(\delta,\alpha,\theta_1,\theta_2,\beta)$. Let $\{\phi_{N_k,\rho}\}$ be a sequence of independent generalized Brownian motion at scale $N_k$ with parameters $\theta_1,\theta_2,\rho$. Then with probability one, $\DS \phi:= \sum_k \phi_{N_k,\rho}$ is $C^\alpha$ and
     $h_\phi$ is not differentiable at Lebesgue almost every $\lambda\in (\delta,\frac{1}{2}-\delta)$.
     \end{prop}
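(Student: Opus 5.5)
The plan is to work pointwise in $\lambda$ and then apply Fubini. Fix $\lambda\in(\delta,\frac12-\delta)$. By condition (1) of Definition~\ref{Definition of adapted pair}, there is an almost sure event $\Omega_\lambda$ on which some $k_0$ exists such that for all $k\ge k_0$ we have $\|\phi_{N_k,\rho}\|_{C^\alpha}\le N_k^{-\beta}$ and $|D_k|\ge N_k^\beta$. Here
\[
D_k:=\int_\Sigma\phi'_{N_k,\rho}(X(\lambda;a))X^{(1)}(\lambda;a)\,d\nu(a).
\]
The $C^\alpha$ claim needs slightly more care, since it must hold almost surely without reference to $\lambda$. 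The event in (1) with the $C^\alpha$ bound alone does not depend on $\lambda$, so it holds almost surely. On it, the tail $\sum_{k\ge k_0}\phi_{N_k,\rho}$ converges in $C^\alpha$. The finitely many remaining terms are $C^1$ functions that are constant outside $[0,2]$, hence $C^\alpha$. So $\phi$ is almost surely $C^\alpha$ and bounded, and $h_\phi(\lambda)=\sum_k h_{N_k}(\lambda)$ by uniform convergence.

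On $\Omega_\lambda$, I take $\epsilon=\epsilon_n$ with $n\ge k_0$ and split the difference quotient into three pieces:
\[
\frac{h_\phi(\lambda+\epsilon_n)-h_\phi(\lambda)}{\epsilon_n}=\sum_{l<n}\frac{h_{N_l}(\lambda+\epsilon_n)-h_{N_l}(\lambda)}{\epsilon_n}+\frac{h_{N_n}(\lambda+\epsilon_n)-h_{N_n}(\lambda)}{\epsilon_n}+\sum_{l>n}\frac{h_{N_l}(\lambda+\epsilon_n)-h_{N_l}(\lambda)}{\epsilon_n}.
\]
The middle term is handled by condition (3): it equals $D_n+R_n/\epsilon_n$, so its absolute value is at least $\frac{99}{100}|D_n|\ge\frac{99}{100}N_n^\beta$.

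The tail is bounded by $2\epsilon_n^{-1}\sum_{l>n}\|\phi_{N_l,\rho}\|_{C^0}$. For $l>n\ge k_0$ this sum is at most $2\epsilon_n^{-1}\sum_{l>n}N_l^{-\beta}\le\frac{2}{100}$ by condition (4). (I use that $\|\cdot\|_{C^\alpha}$ includes the sup norm.)

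For the head, $\phi_{N_l,\rho}$ is $C^1$ with $|\phi'_{N_l,\rho}|=|g_{N_l,\rho}|\le N_l^{\theta_1}$. By Lemma~\ref{Lemma: Linear Response for C1 observables} and the mean value theorem, each head term is bounded by $N_l^{\theta_1}\sup_{\lambda'}\int|X^{(1)}(\lambda';a)|\,d\nu\le C_\delta N_l^{\theta_1}$. Here $C_\delta=\sum_m m(\frac12)^{m-1}$ bounds $|X^{(1)}|$ uniformly for $\lambda'\le\frac12$, which covers $\lambda+\epsilon_n$ once $\epsilon_n<\delta$. Condition (2) then bounds the head by $C_\delta N_n^{\beta/2}$.

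Combining the three bounds, the quotient is at least $\frac{99}{100}N_n^\beta-C_\delta N_n^{\beta/2}-\frac{2}{100}$, which tends to infinity. So $h_\phi$ is not differentiable at $\lambda$ on $\Omega_\lambda$.

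Finally, let $E=\{(\omega,\lambda): h_\phi \text{ is not differentiable at }\lambda\}$. It is measurable, because differentiability can be expressed through difference quotients of the continuous function $h_\phi$ over rational increments. For every $\lambda$ its section has probability one, so by Tonelli, for almost every $\omega$ the $\lambda$-section has full Lebesgue measure in $(\delta,\frac12-\delta)$.

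The main obstacle is bookkeeping rather than substance. First, the head estimate needs a derivative bound on $h_{N_l}$ that is uniform near $\lambda$, which the $C^1$ linear response formula supplies. Second, measurability of $E$ must be justified so that Tonelli applies; I would handle it via continuity of $(\omega,\lambda)\mapsto h_\phi(\lambda)$ and rational approximations of the limsup and liminf of the difference quotients.
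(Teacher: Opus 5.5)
Your proposal is correct and follows essentially the same route as the paper. You fix $\lambda$, work on the full-probability event from condition (1), split the difference quotient at $\epsilon_n$ into head, middle and tail sums, bound these using conditions (2), (3) and (4) respectively, and conclude by Tonelli--Fubini. Your extra remarks fill in details the paper leaves implicit: the $C^\alpha$ event does not depend on $\lambda$, $X^{(1)}$ is uniformly bounded near $\lambda$, and the joint measurability needed for Tonelli holds.
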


     Before going to the proof, let us make a notational remark that, since in the rest of this section, $\delta$ will be fixed and so is $\rho$, we will omit the subscript and write $\phi_{N_k}$ instead of $\phi_{N_k , \rho}$ and similarly $g_{N_k}$ instead of $g_{N_k,\rho}$.
     
\begin{proof}

     We claim that for any fixed $\lambda\in (\delta,\frac{1}{2}-\delta)$, if the events in 
     \eqref{Condtion on Nk that has prob one} happen (which has probability $1$), then
     $\DS \lim_{k\to\infty} \left|\frac{\int\phi \,d\mu_{\lambda+\epsilon_k} - \int\phi\,d\mu_\lambda}{\epsilon_k}\right| = \infty. $
Clearly, Proposition \ref{Proposition: Necessary condition for Almost Everywhere non-diff}  follows from this by Tonelli-Fubini.

To show the claim, note that since $\DS \phi:=\sum_k \phi_{N_k}$ and $||\phi_{N_k}||_{C^\alpha}$ is summable 
(by Definition \ref{Definition of adapted pair}(4)), 
one can interchange summation and integration, and write for any $k\ge 2$,

\begin{align}
    \nonumber \frac{\int\phi \,d\mu_{\lambda+\epsilon_k} - \int\phi\,d\mu_\lambda}{\epsilon_k} &= \sum_{l=1}^{k-1}\frac{ \int\phi_{N_l}\,d\mu_{\lambda+\epsilon_k} - \int\phi_{N_l}\,d\mu_\lambda}{\epsilon_k} +\frac{ \int\phi_{N_k}\,d\mu_{\lambda+\epsilon_k} - \int\phi_{N_k}\,d\mu_\lambda}{\epsilon_k}\\
    &+\sum_{l=k+1}^{\infty}\frac{ \int\phi_{N_l}\,d\mu_{\lambda+\epsilon_k} - \int\phi_{N_l}\,d\mu_\lambda}{\epsilon_k} =: S_k+ T_k+R_k.
\end{align}

We will show that $|T_k| \to\infty$ and $S_k,R_k = o(T_k)$, which readily implies the claim.

First, 

\begin{align}\label{Estimate of T_k}
  |T_k| = \left|\int_\Sigma  \phi'_{N_k}(X(\lambda;a))X^{(1)}(\lambda;a) \,d\nu(a)\right|  +\frac{R_k(\lambda,\epsilon_k)}{\epsilon_k}
    \ge \frac{99}{100} N_k^{\beta}.
\end{align}
Here the inequality holds by items $(1)$ and $(3)$ from definition \ref{Definition of adapted pair}. In particular, $|T_k|\to\infty$ as $k\to\infty$.

Second, 

\begin{align*}
\nonumber    |S_k| &=|\sum_{l=1}^{k-1} \int_{\Sigma} g_{N_{l}} (X(\lambda';a)) X^{(1)}(\lambda';a) \,d\nu(a) | \quad\text{for some }\lambda'\in(\lambda,\lambda+\epsilon_k) \text{ by Taylor's theorem}\\
\nonumber    &\le C \sum_{l=1}^{k-1} N_l^{\theta_1}  \quad\text{for some absolute constant }  C>0  \text{ because } X^{(1)} \text{ is uniformly bounded}\\
  \nonumber  &\le C N_k^{\frac{\beta}{2}} \quad \text{by Definition \ref{Definition of adapted pair}(2)}\\
    & = o(T_k) \quad \text{by \eqref{Estimate of T_k}}.
\end{align*}

Finally, 
\begin{equation}
   |R_k| \le \sum_{l=k+1}^\infty \frac{2\sup_{x\in[0,1]} |\phi_{N_l}|}{\epsilon_k}
  \le\frac{2}{\epsilon_k}  \sum_{l=k+1}^\infty  ||\phi_{N_l}||_{C^\alpha}
   \le \frac{2}{\epsilon_k} \sum_{l=k+1}^\infty N_{l}^{-\beta}
\le \frac{2}{\epsilon_k} \frac{\epsilon_k}{100} 
 =o(T_k).
\end{equation}
where the last inequality is by item $(4)$ of Definition \ref{Definition of adapted pair}.
The proof is finished. 
\end{proof}

\subsection{Existence of Adapted Pair}\label{Subsection: Existence of Adapted Pair}

In order to prove Theorem \ref{Theorem: Almost Everywhere Non-differentiability}, fix a $\delta\in (0,\frac{1}{4})$ and $\alpha\in (0,1)$ as in its statement. By Proposition \ref{Proposition: Necessary condition for Almost Everywhere non-diff}, it suffices to find suitable $\theta_1,\theta_2,\beta>0$ and construct an adapted pair $(\{N_k\},\{\epsilon_k\})$ to $(\delta,\alpha,\theta_1,\theta_2,\beta)$. This will be the goal of this subsection.

Let us describe the idea of the construction. We will construct $\theta_1,\theta_2,\beta$, $\{N_k\}$ and $\{\epsilon_k\}$ in an inductive way. Namely

\begin{itemize}
\item First fix suitable $\theta_1,\theta_2,\beta$ in terms of $\delta,\alpha$.
\item The order of construction is then $N_1\to\epsilon_1\to N_2\to\epsilon_2\to\cdots$.
    \item For each $k\in\mathbb{N}$, $N_{k+1}$ is sufficiently large in terms of $N_1,\cdots,N_k$ and $\epsilon_1,\cdots,\epsilon_k$.
    \item For each $k\in\mathbb{N}$, $\epsilon_{k+1}$ is sufficiently small in terms of $N_1,\cdots,N_{k+1}$ and $\epsilon_1,\cdots,\epsilon_k$.
\end{itemize}

With these arrangements, $N_k$ will grow rapidly enough, so that $||\phi_{N_k}||_{C^\alpha}$ will be summable with probability one by Borel-Cantelli and Azuma-Hoeffding. Moreover, the quantity $\DS \int_{\Sigma} \phi'_{N_k}(X(\lambda;a))X^{(1)}(\lambda;a)\,d\nu(a)$ can be written as a sum of many independent random variables with large total variance. Using Borel-Contelli and Berry-Esseen, it will be large with probability one as well. As for item $(3)$ in Definition \ref{Definition of adapted pair}, it will hold simply due to Taylor's theorem, as $\epsilon_n$ is small compared to previous data. Finally, 
items  (2) and (4) in Definition \ref{Definition of adapted pair} are easy to arrange if $\{N_k\}$ grows sufficiently fast.

Here is the plan for the rest of this subsection. In \S \ref{Subsubsection: Sufficient condition for item 1} we state a sufficient condition for item $(1)$. In \S  \ref{Subsubsection: Sufficient condition for item 2}
we do the same for item $(2)$ and in \S  \ref{Subsubsection: Completing the construction} we combine them to finish the construction.

\subsubsection{Summable H\"older Norm and Large Derivative }\label{Subsubsection: Sufficient condition for item 1}
Here we  show that for any $\delta\in (0,\frac{1}{4})$ and any $\alpha\in(0,1)$, with suitable choice of $\beta,\theta_1,\theta_2$, item $(1)$ in Definition \ref{Definition of adapted pair} will be true provided $\{N_k\}$ grows rapidly enough (in terms of $\beta,\theta_2,\theta_2,\delta,\alpha$). This can be broken into two steps:

\begin{prop}\label{Proposition: Holder norm is small with probability 1}
For any $\alpha\in(0,1)$ the following is true. If $\theta_1,\theta_2,\beta>0$ are such that $\DS 2u:= 1-2\theta_1-2\frac{\alpha\theta_1+\beta}{1-\alpha}>0$ (clearly this can be achieved as long as $\theta_1$ and $\beta$ are sufficiently small). Then for any increasing sequence $\{N_k\}$ of positive integers with

$\DS \sum_k \exp\{-N_k^u\}<\infty,$
it holds that $\DS    \mathbb{P} \left(\exists k_0\in\mathbb{N} \text{ s.t. } \forall k\ge k_0,\, ||\phi_{N_k}||_{C^\alpha} \le N_k^{-\beta}  \right) =1.$
\end{prop}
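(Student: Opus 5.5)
The plan is to bound $\|\phi_{N}\|_{C^\alpha}$ through its $C^0$ norm, using Lemma \ref{Lemma: bounding holder norm by C0 and C1 norm} together with the deterministic bound $|\phi_N'|_{C^0}=|g_{N,\rho}|_{C^0}\le N^{\theta_1}$ (since $0\le g_\rho\le 1$ may be assumed). That lemma gives
$$\|\phi_N\|_{C^\alpha}\le |\phi_N|_{C^0}+2N^{\alpha\theta_1}|\phi_N|_{C^0}^{1-\alpha}.$$
So it suffices to show that, with overwhelming probability, $|\phi_N|_{C^0}\le t_N:=c_0N^{-\frac{\alpha\theta_1+\beta}{1-\alpha}}$ for a small absolute constant $c_0$. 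On that event the second term is at most $2c_0^{1-\alpha}N^{-\beta}\le \frac12 N^{-\beta}$, and the first term is at most $t_N\le \frac12 N^{-\beta}$ for large $N$, because $\frac{\alpha\theta_1+\beta}{1-\alpha}\ge\beta$.

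Next I reduce the supremum to a random walk. On each $I_j^{(N)}$ the function $g_{N,\rho}$ has a constant sign, so $\phi_N$ is monotone there, and on $[0,2]$ we get $\sup|\phi_N|=\max_{0\le j\le N}|S_j|$. Outside $[0,2]$, $\phi_N$ is constant. Here $S_0=0$, $S_j=\sum_{i\le j}Y_i$, and the $Y_i=\int_{I_i^{(N)}}g_{N,\rho}$ are independent, symmetric (hence mean zero) and bounded by $|Y_i|\le 2N^{\theta_1-1}$. Thus $(S_j)$ is a martingale with increments bounded by $c_i=2N^{\theta_1-1}$, and $\sum_{i\le N}c_i^2=4N^{2\theta_1-1}$.

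Azuma--Hoeffding (Lemma \ref{Lemma: Azuma-Hoeffding}) then gives, for each $j\le N$,
$$\mathbb{P}(|S_j|\ge t_N)\le 2\exp\Big(-\frac{t_N^2}{4N^{2\theta_1-1}}\Big)=2\exp\big(-\tfrac{c_0^2}{4}N^{1-2\theta_1-\frac{2(\alpha\theta_1+\beta)}{1-\alpha}}\big)=2\exp\big(-c_1N^{2u}\big).$$
A union bound over the $N+1$ values of $j$ (a Doob maximal inequality would work equally well) yields
$$\mathbb{P}\big(\|\phi_N\|_{C^\alpha}>N^{-\beta}\big)\le 2(N+1)e^{-c_1N^{2u}}\le e^{-N^u}$$
for all $N$ beyond some threshold depending only on $\alpha,\theta_1,\beta$. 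Applying this with $N=N_k$, the hypothesis $\sum_k\exp\{-N_k^u\}<\infty$ and the Borel--Cantelli lemma show that almost surely only finitely many $k$ violate $\|\phi_{N_k}\|_{C^\alpha}\le N_k^{-\beta}$. Since $N_k$ is increasing, $N_k\to\infty$, so all but finitely many $k$ lie beyond the threshold, which is exactly the claimed probability-one statement.

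The main obstacle is bookkeeping rather than a conceptual difficulty. The points to check are the exponent arithmetic, so that the Azuma exponent comes out exactly as $N^{2u}$ with the $u$ of the statement; the absorption of the polynomial factor $N+1$ into the gap between $N^{2u}$ and $N^{u}$; and the justification that $\sup|\phi_N|$ is attained at the grid points, which relies on the sign-constancy of $g_{N,\rho}$ on each $I_j^{(N)}$. I would also note that the supremum in the Hölder seminorm is over $[0,1]$ or $\mathbb{R}$, and that both are controlled by the bound on $\mathbb{R}$.
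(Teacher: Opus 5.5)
Your proof is correct and follows essentially the same route as the paper: reduce the $C^\alpha$ norm to the sup norm via Lemma \ref{Lemma: bounding holder norm by C0 and C1 norm} using $|\phi_N'|_{C^0}\le N^{\theta_1}$, control $\max_j|S_j|$ at the grid points by Azuma--Hoeffding and a union bound, and finish with Borel--Cantelli. The differences are cosmetic. You use a single threshold $t_N\le \frac12 N^{-\beta}$ for both the $C^0$ and Hölder parts, where the paper runs two separate Borel--Cantelli sums, and you justify explicitly, via monotonicity of $\phi_N$ on each cell, why the supremum is attained at the grid points.
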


\begin{prop}\label{Proposition: Derivative is large with Probability 1}

For any $\delta\in(0,\frac{1}{4})$  the following is true. Fix any $\theta_1,\theta_2,\beta>0$ such that $\DS \beta<\theta_1$ and $\DS -\tau:=\theta_2 + \log_{\frac{1}{2}-\delta} 2<0$. For any sequence $\{N_k\}$ of increasing positive integers satisfying
$$\sum_k N_k^{-\tau}<\infty.$$

and any $\lambda\in (\delta,\frac{1}{2}-\delta)$,
    $$    \mathbb{P} \left(\exists k_0\in\mathbb{N} \text{ s.t. } \forall k\ge k_0,\, |\int_\Sigma  \phi'_{N_k}(X(\lambda;a))X^{(1)}(\lambda;a) \,d\nu(a)| \ge N_k^\beta   \right) =1.$$
\end{prop}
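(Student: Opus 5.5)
The plan is to write the quantity in question as a sum of independent, mean-zero random variables indexed by the intervals $I_j^{(N)}$, bound its variance from below using the cylinder structure of $\mu_\lambda$ (via Lemma \ref{Lemma: mulambda measure of certain certain sets cannot be small}), convert that into a small-ball estimate with Berry--Esseen (Lemma \ref{Lemma: Berry-Esseen}), and sum over $k$ with Borel--Cantelli.

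\textbf{Setup.} Fix $\lambda\in(\delta,\frac12-\delta)$ and write $N=N_k$. Since $\phi_N'=g_N$, the quantity to control is
$$D_N:=\int_\Sigma g_N(X(\lambda;a))X^{(1)}(\lambda;a)\,d\nu(a)=N^{\theta_1}\sum_{j=1}^N \varepsilon_j c_j,\qquad c_j:=\int_{\{X(\lambda;a)\in I_j^{(N)}\}} g_\rho(NX(\lambda;a)-j+1)\,X^{(1)}(\lambda;a)\,d\nu(a).$$
Here the $\varepsilon_j\in\{-1,0,1\}$ are i.i.d., with $\mathbb P(\varepsilon_j=\pm1)=N^{-\theta_2}$ each. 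The $c_j$ are deterministic, depending only on $\lambda$ and $N$. So $D_N$ is a sum of independent, symmetric, bounded random variables. The whole proof reduces to showing that
$$\sum_k \mathbb P\big(|D_{N_k}|<N_k^\beta\big)<\infty,$$
after which Borel--Cantelli gives the statement.

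\textbf{Lower bound on the $c_j$.} Choose $m$ with $\lambda^m/(1-\lambda)$ comparable to $1/N$. Then there are $2^m=N^{d}$ level-$m$ cylinders, where $d=\log_{1/\lambda}2$ (up to constants). By the argument of Lemma \ref{Lemma: X^(1) is not too small if X is large} (forcing a fixed number $m_0=m_0(\delta)$ of leading digits), a proportion at least $2^{-m_0}$ of these cylinders $C$ satisfy $X^{(1)}>\frac12$ on $\{X\in C\}$.

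Each such $C$ meets at most $L$ of the intervals $I_j^{(N)}$. The set of $x\in C$ where $g_\rho(Nx-j+1)\ne 1$ is a union of at most $L$ intervals of total length $\le \frac{4(1-\rho)}{3N}\cdot L$. By the choice $\rho=1-\frac1{100}\frac{\delta\rho'}{2-\rho'}$, this length is $<\rho'|C|$. Lemma \ref{Lemma: mulambda measure of certain certain sets cannot be small} then says this set carries at most half of $\mu_\lambda(C)$, so $g_\rho\equiv1$ on a set of $\mu_\lambda$-mass $\ge\frac12 2^{-m}$ inside $C$.

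Since $g_\rho\ge 0$ and $X^{(1)}>\frac12$ there, some $j$ meeting $C$ has $c_j\gtrsim 2^{-m}/L$. Different good cylinders may share a $j$, but only boundedly many do. Hence there are at least $c(\delta)N^{d}$ indices $j$ in a set $J$ with $|c_j|\ge c(\delta)N^{-d}$.

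\textbf{Anticoncentration.} First, the number of active indices $\#\{j\in J:\varepsilon_j\neq0\}$ is binomial with mean $\asymp N^{d-\theta_2}\ge N^{\tau}$, because $d>\log_{1/(1/2-\delta)}2$. A Chernoff bound shows it is $\ge \frac12$ of its mean except with probability $e^{-cN^\tau}$, which is summable.

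Next, condition on $\{\varepsilon_j\}_{j\notin J}$ and on which $j\in J$ are active. The remaining randomness is the independent symmetric signs on the active set, so the conditional law of $D_N$ is a shifted Rademacher sum with at least $N^{d-\theta_2}$ terms. Apply Lemma \ref{Lemma: Berry-Esseen} to it. Since the active coefficients lie in $[cN^{-d},CN^{-d}]$, the third-moment error is $O(N^{-(d-\theta_2)/2})$, and the Gaussian assigns mass $\lesssim 2N^\beta/\sigma$ to any interval of length $2N^\beta$.

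The main obstacle is making these two terms summable under the hypotheses $\beta<\theta_1$ and $\sum N_k^{-\tau}<\infty$. The Berry--Esseen error is $N^{-\tau/2}$-type, and I expect the hypothesis on $\sum_k N_k^{-\tau}$ to be tailored to absorb it, possibly after squaring or replacing $N_k$ by a subsequence. The Gaussian small-ball term compares $N^\beta$ with the standard deviation $\sigma\asymp N^{\theta_1}\cdot N^{-d}\cdot N^{(d-\theta_2)/2}$.

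I will first try to verify that the effective normalization makes $\sigma\gg N^\beta$ because $\beta<\theta_1$. If the $c_j$ bound is too lossy for that, I would fall back on a direct argument on the event that some large active coefficient appears: the atom structure of Rademacher sums gives a small-ball bound of order $(\#\text{active})^{-1/2}$ by Littlewood--Offord, which again reduces to the summability of $N_k^{-\tau}$.
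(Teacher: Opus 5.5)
Your skeleton matches the paper's: independent mean-zero pieces, a variance lower bound via Lemma \ref{Lemma: mulambda measure of certain certain sets cannot be small}, Berry--Esseen, then Borel--Cantelli. Your computation $\sigma\asymp N^{\theta_1-(d+\theta_2)/2}$, with $d=d(\lambda)=\log_{1/\lambda}2$, is correct. The gap is the step you leave open, and it cannot be closed, because \emph{under the stated hypotheses the proposition is false}.

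\textbf{First failure: the Gaussian term.} An interval of length $2/N$ meets boundedly many level-$m$ cylinders. Hence $|c_j|\lesssim\mu_\lambda(I_j^{(N)})\lesssim 2^{-m}\asymp N^{-d}$, and
\[
\mathrm{Var}(D_N)=2N^{2\theta_1-\theta_2}\sum_j c_j^2\lesssim N^{2\theta_1-\theta_2}\max_j\mu_\lambda(I_j^{(N)})\lesssim N^{2\theta_1-\theta_2-d}.
\]
Chebyshev then gives $\mathbb P(|D_N|\ge N^\beta)\lesssim N^{2(\theta_1-\beta)-\theta_2-d}$. The hypothesis $\beta<\theta_1$ does not rule out $2(\theta_1-\beta)<\theta_2+d(\lambda)$ (take $\theta_1=2\beta$ small). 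In that case these probabilities tend to $0$, so the event in the statement has probability $0$, not $1$. Your Littlewood--Offord fallback gives the same exponent $N^{\beta-\theta_1+(d+\theta_2)/2}$, so it cannot rescue this.

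\textbf{Second failure: the active count.} Your claim $d>\log_{1/(1/2-\delta)}2$ is backwards. Since $d(\lambda)$ is increasing in $\lambda$, you actually have $d(\lambda)-\theta_2<\tau$ on $(\delta,\frac12-\delta)$. The hypotheses allow $\log_{1/\delta}2<\theta_2<\log_{1/(1/2-\delta)}2$. In that case, for $\lambda$ near $\delta$:
\begin{itemize}
\item only $\lesssim N^{d}$ indices have $c_j\neq 0$;
\item the expected number of active ones among them is $\lesssim N^{d-\theta_2}\to0$;
\item so $D_N=0$ with probability tending to $1$.
\end{itemize}

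\textbf{The paper's proof.} It reaches the statement only through two errors, and you share the second.
\begin{itemize}
\item From $\sigma^2\gtrsim 2^{-m}N^{2\theta_1-\theta_2}$ it writes $N^\beta/\sigma\approx N^{\beta-\theta_1+\theta_2/2+\frac12\log_\lambda 2}$. The correct exponent is $\beta-\theta_1+\frac12(\theta_2-\log_\lambda 2)$: the factor $2^{-m/2}$ in front of $N^{\beta-\theta_1+\theta_2/2}$ should be $2^{m/2}$.
\item It bounds $N^{(\theta_2+\log_\lambda 2)/2}$ by $N^{(\theta_2+\log_{1/2-\delta}2)/2}$. This goes the wrong way, because $\log_\lambda 2>\log_{1/2-\delta}2$ for $\lambda<\frac12-\delta$.
\end{itemize}

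\textbf{What the argument does prove.} Your argument, or the paper's, establishes the statement under
\[
\theta_2<\log_{1/\delta}2\qquad\text{and}\qquad \theta_1-\beta>\tfrac12\bigl(\theta_2+\log_{1/(1/2-\delta)}2\bigr).
\]
Under these conditions:
\begin{itemize}
\item your Chernoff bound on the active count holds;
\item your conditional Berry--Esseen error $N^{-(d-\theta_2)/2}$ is fine;
\item the Gaussian term $N^{\beta-\theta_1+(d+\theta_2)/2}$ is fine.
\end{itemize}
All three are uniform in $\lambda\in(\delta,\frac12-\delta)$ and summable along geometrically growing $N_k$. Conditioning on the active set is a clean way to run this step.

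Note that the second condition conflicts with $\theta_1+\beta<\frac{1-\alpha}{2}$, which is equivalent to $u>0$ in Proposition \ref{Proposition: Holder norm is small with probability 1}, unless $\alpha<1-\log_{1/(1/2-\delta)}2$. So the downstream choice of parameters also needs revisiting.
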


Combining the two Propositions, it follows that for any fixed small $\delta>0$ and $\alpha\in(0,1)$, as long as $\theta_1,\theta_2,\alpha$ and$\{N_k\}$ are such that

\begin{itemize}\label{Necessary conditions for item 1}
    \item $\DS 2u:= 1-2\theta_1-2\frac{\alpha\theta_1+\beta}{1-\alpha}>0$.
    \item  $\DS \beta<\theta_1$ and $\DS -\tau:=\theta_2 + \log_{\frac{1}{2}-\delta} 2<0$.
    \item $\DS \sum_k N_k^{-\tau}<\infty$ and hence $\DS \sum_k \exp\{-N_k^u\}<\infty$, 
\end{itemize}

then item $(1)$ from Definition \ref{Definition of adapted pair} is true. The rest of this subsection is devoted to  proving the propositions.

\begin{proof}[Proof of Proposition \ref{Proposition: Holder norm is small with probability 1}]

Given a function $f: \red [0,2]\to\mathbb{R}$ let
$\DS |f|_{C^0} = \sup_{x\in[0,2]} |f(x)|$  and $\DS |f|_{C^{\alpha}} = \sup_{x\neq y\in[0,2]}  \frac{|f(x)-f(y)|}{|x-y|^\alpha}.$
We will show that
\begin{equation}
    \sum_{k=1}^\infty \mathbb{P}\left( |\phi_{N_k}|_{C^0} > \frac{1}{2}N_k^{-\beta}   \right)<\infty \quad\text{ and }\quad    \sum_{k=1}^\infty \mathbb{P}\left( |\phi_{N_k}|_{C^\alpha} > \frac{1}{2}N_k^{-\beta}   \right)<\infty .
\end{equation}
This together with the Borel-Cantelli Lemma and the identity $||\cdot||_{C^\alpha} = |\cdot|_{C^0}+|\cdot|_{C^\alpha}$ implies the proposition.

We begin with $|\cdot|_{C^0}$ norm and aim to bound $\mathbb{P}\left( |\phi_{N_k}|_{C^0} > \frac{1}{2}N_k^{-\beta}   \right)$. For simplicity we fix $k$ and write $N$ for $N_k$. 

Observe that by the construction of $\phi_N$, $\DS |\phi_N|_{C^0} = \max \{|\phi_N(x_j)|:j=0,1,\cdots,N\}$ where $x_j:= \frac{2j}{N}$. Define $X_0 = 0,X_1 = \phi_N(x_1)$ and $X_j = \phi_N(x_j) - \phi_N(x_{j-1}) , j=2,3,\cdots, N$. Then $\phi_N(x_j) = X_1+\cdots+ X_j=:S_j$, and it is not hard to see that $\{X_j:j\ge 1\}$ is a sequence of i.i.d with mean zero. Moreover, $|X_j|\le N^{\theta_1-1}$. Thus we can apply Lemma~\ref{Lemma: Azuma-Hoeffding} to the martingale $\{S_j\}$ to conclude that for any $l\in[1,N]\cap \mathbb{Z}$,
$$\mathbb{P}\left(|S_l| > \frac{1}{2}N^{-\beta}\right) \le 
2\exp\left(-\frac{\frac{1}{4}N^{-2\beta}}{2l N^{2\theta_1-2}} \right)
\le 2\exp\left( -\frac{1}{8}N^{1-2\beta-2\theta_1}\right) \le 2\exp\left(-\frac{N^{2u}}{8}\right)$$
where in the last inequality we used the fact that $1-2\beta-2\theta_1>2u$. It follows that

\begin{align*}
    \mathbb{P}\left( |\phi_{N}|_{C^0} > \frac{1}{2}N^\beta   \right) & =\mathbb{P}\left( \max \{|S_l|:l=1,2,\cdots,N\} > \frac{1}{2}N^\beta   \right)\\
    &\le \sum_{l=1}^N \mathbb{P}\left( |S_l| > \frac{1}{2}N^\beta   \right)
    \le N\exp\left(-\frac{N^{2u}}{8}\right)
    \le \exp(-N^u)
\end{align*}
where the last inequality holds for all sufficiently large $N$. Thus
$$   \sum_{k=1}^\infty \mathbb{P}\left( |\phi_{N_k}|_{C^0} > \frac{1}{2}N_k^\beta   \right) \le \sum_{k=1}^\infty  
\exp\left(-\frac{N_k^u}{8}\right)  <\infty $$
as desired. 

Next we turn to $|\cdot|_{C^\alpha}$ norm and aim to bound $\mathbb{P}\left( |\phi_{N_k}|_{C^\alpha} > \frac{1}{2}N_k^{-\beta}   \right)$. Again we fix $k$ and write $N$ for $N_k$. 

Since $\phi_N$ is $C^1$, by Lemma \ref{Lemma: bounding holder norm by C0 and C1 norm},  $\DS |\phi_N|_{C^\alpha} \le2 |\phi_N'|_{C^0}^{\alpha} |\phi_N|_{C^0}^{1-\alpha} \le 2 N^{\alpha\theta_1} |\phi_N|_{C^0}^{1-\alpha}$. Therefore an easy calculation shows that $|\phi_N|_{C^\alpha}>\frac{1}{2}N^{-\beta}$ implies $|\phi_N|_{C^0} > 4^{\frac{1}{\alpha-1}} N^{-\frac{\alpha\theta_1+\beta}{1-\alpha}}.$

Applying Lemma \ref{Lemma: Azuma-Hoeffding} again we have
\begin{align*}
    \mathbb{P}\left( |\phi_{N}|_{C^\alpha} > \frac{1}{2}N^{-\beta}   \right)&\le \mathbb{P}\left(  |\phi_N|_{C^0} > 4^{\frac{1}{\alpha-1}} N^{-\frac{\alpha\theta_1+\beta}{1-\alpha}}  \right) \le \\
     N \exp\left(  -\frac{4^{\frac{2}{\alpha-1}} N^{-2\frac{\alpha\theta_1+\beta}{1-\alpha}}}{2NN^{2\theta_1-2}}  \right)
    &\le N\exp\left(-2^{\frac{5-\alpha}{\alpha-1}} N^{1-2\theta_1-2\frac{\alpha\theta_1+\beta}{1-\alpha}} \right)
    \le N\exp\left(-\frac{1}{8}N^{2u}\right)\le \exp\left(-N^u\right).
\end{align*}
Here the last inequality is true as long as $N$ is sufficiently large in terms of $\alpha$. Thus
$$   \sum_{k=1}^\infty \mathbb{P}\left( |\phi_{N_k}|_{C^\alpha} > \frac{1}{2}N_k^\beta   \right) 
\le \sum_{k=1}^\infty  \exp\left(-\frac{N_k^u}{8}\right)  <\infty $$
and the proposition is proved. 
\end{proof}

\begin{proof}[Proof of Proposition \ref{Proposition: Derivative is large with Probability 1}.]

    By Borel-Cantelli Lemma, it suffices to show that
    \begin{equation}\label{Equation: Borel-Contelli reductio 1}
        \sum_{k=1}^\infty      \mathbb{P} \left(\left|\int_\Sigma  
        \phi'_{N_k}(X(\lambda;a))X^{(1)}(\lambda;a) \,d\nu(a)\right| < N_k^\beta   \right) <\infty.
    \end{equation}
To prove this we need to estimate each summand in the above left hand side. Thus we fix a $k$ and for simplicity write $N$ for $N_k$.

Let $m\in\mathbb{N}$ be such that $\DS \frac{2\lambda^m(1-2\lambda)}{1-\lambda}<\frac{2}{N}\le \frac{2\lambda^{m-1}(1-2\lambda)}{1-\lambda}$. Also fix an $n_0\ge 10\in \mathbb{N}$ such that 
$\DS \sum_{n\ge n_0} \left(\frac{1}{2}-\delta\right)^n < \frac{1}{4(1-\delta)}$  and  
$\DS \sum_{n\ge n_0} n \left(\frac{1}{2}-\delta\right)^{n-1} <\frac{1}{2}.$
Clearly such $n_0$ exists and only depends on $\delta$. Thus without loss of generality, we may assume $m>n_0$ by considering sufficiently large $N$.

Consider the level-$m$ cylinders of $\mu_{\lambda}$ (for the definition of level-$m$ cylinders, see \S \ref{Subsubsection:Self-similar measure lemma}).
Label them by $C_1,C_2,\cdots,C_{2^m}$ in such a way that $C_{1},\cdots, C_{2^{m-n_0}}$ correspond to the words $(w_1,\cdots,w_m)$ with $w_1 = w_2 = \cdots =w_{n_0} =2$. Note that each $C_l$ is an interval of length $2\frac{\lambda^m}{1-\lambda}$, and the distance between any two of them is at least $\DS \frac{2\lambda^{m-1}(1-2\lambda)}{1-\lambda}$, which is at least $\frac{2}{N}$. As a consequence, we can write
$$\int_\Sigma  \phi'_{N_k}(X(\lambda;a))X^{(1)}(\lambda;a) \,d\nu(a) = \sum_{l=1}^{2^m}\int_{\{a\in \Sigma: X(\lambda;a)\in C_l\}}  \phi'_{N_k}(X(\lambda;a))X^{(1)}(\lambda;a) \,d\nu(a) =: \sum_{l=1}^{2^m} D_l$$
and $\{D_l\}$ is a sequence of independent random variables (the randomness comes from $\phi_N$). Moreover, it is easy to see that $\mathbb{E}[D_l]=0,\forall l$.

To apply Berry-Esseen Theorem we need to estimate the variance and third moment of $D_l$'s. This is done by the following two lemmas.

\begin{lm}\label{Lemma: Estimate of Variance}
    $\mathbb{E}[D_{l}^2 ] \ge c_1 2^{-2m} N^{2\theta_1-\theta_2} ,\; \forall l\in [1,2^{m-n_0}]\cap\mathbb{Z}$. 
\end{lm}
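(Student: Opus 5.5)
The plan is to write $D_l$ as a sum of independent contributions from the subintervals $I_j^{(N)}$ that meet $C_l$. The variance is then at least the contribution of one well-chosen $j$. That $j$ is found by combining Lemma \ref{Lemma: mulambda measure of certain certain sets cannot be small} with a pigeonhole argument.

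\emph{Independent decomposition.} By Definition \ref{Definition: Generalized Brownian Motion}, on $I_j^{(N)}$ we have $\phi_N'=g_N=\varepsilon_j N^{\theta_1}g_\rho(Nx-j+1)$. The signs $\varepsilon_j\in\{1,-1,0\}$ are independent, with $\mathbb{P}(\varepsilon_j=\pm1)=N^{-\theta_2}$. Hence
\[
D_l=N^{\theta_1}\sum_{j}\varepsilon_j a_j,\qquad a_j:=\int_{\{a:\,X(\lambda;a)\in C_l\cap I_j^{(N)}\}} g_\rho(NX(\lambda;a)-j+1)\,X^{(1)}(\lambda;a)\,d\nu(a),
\]
and therefore
\[
\mathbb{E}[D_l^2]=2N^{2\theta_1-\theta_2}\sum_j a_j^2\ \ge\ 2N^{2\theta_1-\theta_2}\max_j a_j^2 .
\]
So it suffices to find one $j$ with $a_j\ge c\,2^{-m}$, where $c$ depends only on $\delta$.

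\emph{Positivity of the integrand.} Take $l\le 2^{m-n_0}$. Then every $a$ with $X(\lambda;a)\in C_l$ has $a_1=\dots=a_{n_0}=+1$, since the symbol $2$ corresponds to $f_2^\lambda$, i.e.\ to $+1$. Exactly as in the proof of Lemma \ref{Lemma: X^(1) is not too small if X is large}, the choice of $n_0$ gives
\[
X^{(1)}\ge 1-\sum_{n\ge n_0}n\left(\tfrac12-\delta\right)^{n-1}>\tfrac12 .
\]
Since also $g_\rho\ge0$, this yields $a_j\ge \frac12\,\mu_\lambda\bigl(C_l\cap P_j\bigr)$, where $P_j\subset I_j^{(N)}$ is the plateau on which $g_\rho(Nx-j+1)=1$.

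\emph{Counting and the plateau measure.} The cylinder satisfies $|C_l|=2\lambda^m/(1-\lambda)$. The defining inequality for $m$ gives
\[
\frac1N\le\frac{\lambda^{m-1}(1-2\lambda)}{1-\lambda}\le\frac{|C_l|}{2\delta}
\qquad\text{and}\qquad
|C_l|<\frac{2}{N}\cdot\frac{\lambda}{1-2\lambda}\le\frac{2}{N}\cdot\frac{1}{4\delta}.
\]
The second bound shows that $C_l$ meets at most $L+1$ of the intervals $I_j^{(N)}$, with $L=[\frac1{2\delta}]+1$. The set $Q:=C_l\setminus\bigcup_j P_j$ consists of the two end-pieces of each $I_j^{(N)}$, each of length $O((1-\rho)/N)$. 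Adjacent end-pieces of consecutive $I_j$ merge, so $Q$ is a union of at most about $L$ intervals. Its total length is at most
\[
C(L+1)(1-\rho)\frac1N\ \le\ \frac{C(L+1)(1-\rho)}{2\delta}\,|C_l|,
\]
which is $<\rho'|C_l|$ by the choice $1-\rho=\frac{1}{100}\frac{\delta\rho'}{2-\rho'}$; this is exactly why $\rho$ was so chosen. Lemma \ref{Lemma: mulambda measure of certain certain sets cannot be small} then gives $\mu_\lambda(Q)\le\frac12\mu_\lambda(C_l)=2^{-m-1}$. Thus the plateaus inside $C_l$ carry $\mu_\lambda$-mass at least $2^{-m-1}$. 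By pigeonhole over at most $L+1$ indices, some $j$ has $\mu_\lambda(C_l\cap P_j)\ge 2^{-m-1}/(L+1)$. This gives
\[
\mathbb{E}[D_l^2]\ge \frac{2^{-2m}}{8(L+1)^2}\,N^{2\theta_1-\theta_2}.
\]

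\emph{Main obstacle.} The delicate step is the bookkeeping that makes $Q$ satisfy the hypotheses of Lemma \ref{Lemma: mulambda measure of certain certain sets cannot be small}. One must check that the number of pieces after merging is at most $L$; boundary pieces of $C_l$ may need to be absorbed, or $L$ replaced by a slightly larger constant, with $\rho'$ adjusted. One must also check that the constant $\rho'$ in Definition \ref{Definition of adapted pair} is compatible with the one produced in that lemma. I expect these to be settled by tracking constants, which all depend only on $\delta$.
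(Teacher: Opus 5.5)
Your proof follows the paper's route. The ingredients are $X^{(1)}>\frac12$ on the cylinders with prefix $2^{n_0}$, Lemma \ref{Lemma: mulambda measure of certain certain sets cannot be small} applied to the non-plateau part $Q$ of $C_l$, and pigeonhole over the boundedly many cells $I_j^{(N)}$ meeting $C_l$. The one real difference is an improvement. You compute $\mathbb{E}[D_l^2]=2N^{2\theta_1-\theta_2}\sum_j a_j^2$ exactly, from independence and $\mathbb{E}[\varepsilon_j]=0$. The paper instead keeps only the event that the chosen cell is the unique active one, which costs a factor $(1-2N^{-\theta_2})^{L-1}$.

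Your counting paragraph has two errors, and the second breaks the step as written. First, the left inequality defining $m$ gives $|C_l|<\frac{2}{N}\cdot\frac{1}{1-2\lambda}<\frac{1}{N\delta}$. Your quantity $\frac2N\cdot\frac{\lambda}{1-2\lambda}$ is a \emph{lower} bound for $|C_l|$. The conclusion that $C_l$ meets at most $L+1$ cells still holds with the correct bound.

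Second, your length bound multiplies a number of pieces of order $L\approx\frac{1}{2\delta}$ by $\frac1N\le\frac{|C_l|}{2\delta}$, and so loses a factor $\frac1\delta$. Take pieces of length at most $\frac{2(1-\rho)}{N}$ and $1-\rho=\frac{\delta\rho'}{100(2-\rho')}$. Your bound then equals $\frac{(L+1)\rho'}{100(2-\rho')}|C_l|$, which is not below $\rho'|C_l|$ once $\delta$ is less than about $1/400$. So the stated choice of $\rho$ does not give what you claim.

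Count against $|C_l|$ instead. The gaps $(\frac{2j}{N}-\frac{1-\rho}{N},\frac{2j}{N}+\frac{1-\rho}{N})$ that meet $C_l$ number at most $\frac{N|C_l|}{2}+2$. Hence
$|Q|\le(1-\rho)(|C_l|+\frac4N)\le(1-\rho)(1+\frac2\delta)|C_l|=\frac{(2+\delta)\rho'}{100(2-\rho')}|C_l|$, which is far below $\rho'|C_l|$. This is essentially the paper's density bound.

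The same count shows that $Q$ has at most $L+1$ pieces, and your worry about the piece count is unfounded. In the proof of Lemma \ref{Lemma: mulambda measure of certain certain sets cannot be small}, the number $n$ of level-$l$ cylinders met by $r$ pieces satisfies $n<\frac L8+r$. So $r\le L+1$ still gives $n<2L$ (as $L\ge3$), hence $\mu_\lambda(Q)<\frac12\mu_\lambda(C_l)$ with the same $\rho'$ and $l$. Also, the $\rho'$ of Definition \ref{Definition of adapted pair} is at most the lemma's. With these repairs, your final constant $\frac{1}{8(L+1)^2}$ is correct.
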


\begin{lm}\label{Lemma: Estimate of Third Moment}
    $\mathbb{E}[|D_l^3|] \le c_2 2^{-3m} N^{3\theta_1-\theta_2},\forall l$.
\end{lm}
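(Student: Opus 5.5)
The plan is a direct pointwise bound on $D_l$ multiplied by the indicator of a rare event, followed by taking expectations. No cancellation or independence is needed for this upper bound.

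\emph{Step 1: $C_l$ meets only boundedly many $I_j^{(N)}$.} Each level-$m$ cylinder $C_l$ is an interval of length $\frac{2\lambda^m}{1-\lambda}$. By the choice of $m$ we have $\frac{2\lambda^m(1-2\lambda)}{1-\lambda}<\frac{2}{N}$, and $\lambda<\frac12-\delta$ gives $1-2\lambda>2\delta$. Hence
\[
|C_l|<\frac{2}{N(1-2\lambda)}<\frac{1}{\delta N}.
\]
Consequently $C_l$ meets at most $L':=\lceil \frac{1}{2\delta}\rceil+2$ of the intervals $I_j^{(N)}$, which have length $2/N$. The number $L'$ depends only on $\delta$.

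\emph{Step 2: pointwise bound on $D_l$.} On each $I_j^{(N)}$ we can write $g_N=\varepsilon_j N^{\theta_1} g_\rho(Nx-j+1)$. Here $\varepsilon_j\in\{-1,0,1\}$ are independent, with $\mathbb{P}(\varepsilon_j\neq 0)=2N^{-\theta_2}$. Since $\rho=\rho(\delta)$ is fixed, $\|g_\rho\|_{C^0}=:G$ is a constant. For $\lambda<\frac12$ we also have $|X^{(1)}(\lambda;a)|\le\sum_{n\ge1}n2^{-(n-1)}=4$. Let $E_l$ be the event that $\varepsilon_j\neq0$ for some $j$ with $I_j^{(N)}\cap C_l\neq\emptyset$. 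Off $E_l$, $g_N$ vanishes on $C_l$, so $D_l=0$ there. On $E_l$ we get
\[
|D_l|\le 4G N^{\theta_1}\,\nu\{a: X(\lambda;a)\in C_l\}=4G N^{\theta_1}\mu_\lambda(C_l)=4GN^{\theta_1}2^{-m}.
\]
Here $D_l$ involves $\phi_N'=g_N$, and $\mu_\lambda(C_l)=2^{-m}$ is the cylinder mass from \S\ref{Subsubsection:Self-similar measure lemma}.

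\emph{Step 3: take expectations.} A union bound gives $\mathbb{P}(E_l)\le 2L'N^{-\theta_2}$. Combining with Step 2,
\[
\mathbb{E}|D_l|^3\le (4G)^3N^{3\theta_1}2^{-3m}\,\mathbb{P}(E_l)\le 2L'(4G)^3\,2^{-3m}N^{3\theta_1-\theta_2}.
\]
This proves the lemma with $c_2=2L'(4G)^3$, a constant depending only on $\delta$.

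The only step requiring care is Step 1: we must check that the number of scale-$1/N$ intervals meeting a cylinder is bounded uniformly in $m,N$ and in $\lambda\in(\delta,\frac12-\delta)$. This follows from the definition of $m$ together with the gap $1-2\lambda>2\delta$.
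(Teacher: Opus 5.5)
Your proof is correct and follows essentially the paper's route. A cylinder meets only $O_\delta(1)$ of the intervals $I_j^{(N)}$, the integrand is pointwise $O(N^{\theta_1})$ on a set of $\nu$-mass $2^{-m}$, and it is nonzero only with probability $O(N^{-\theta_2})$. Your single-event union bound is in fact cleaner than the paper's splitting of $D_l$ into per-interval pieces: the paper bounds $\mathbb{E}|D_l|^3$ by the sum of the pieces' third moments without the convexity factor $r^2\le L^2$, which is harmless since it only changes the constant.
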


We postpone the proof of these two lemmas to the end of this section, and proceed to finish proving Proposition \ref{Proposition: Derivative is large with Probability 1}. Let $N(0,\sigma^2)$ be the Gaussian measure that has mean $0$ and variance
$$ \sigma^2:= \mathrm{Var}\left(\sum_{l=1}^{2^m} D_l\right)\ge \mathrm{Var} 
\left(\sum_{l=1}^{2^{m-n_0}} D_l\right) \ge 2^{m-n_0}c_2 2^{-3m} N^{2\theta_1-\theta_2}$$
where the first inequality above is due to independence. Then by Berry Esseen Theorem
(Lemma \ref{Lemma: Berry-Esseen})
\begin{align*}
      &\mathbb{P} \left(|\int_\Sigma  \phi'_{N}(X(\lambda;a))X^{(1)}(\lambda;a) \,d\nu(a)| < N^\beta\right)  \le  \mathbb{P}\left(N(0,\sigma^2)\in [-N^\beta,N^\beta]\right) + c \frac{\sum_{l=1}^{2^m} \mathbb{E}[|D_l|^3]}{\sigma^3} \\
      & \le \mathbb{P}\left(N(0,1) \in \left[-\frac{N^\beta}{\sigma},\frac{N^\beta}{\sigma}\right]\right) + 
      c \frac{2^{m-n_0} c_22^{-3m}N^{3\theta_1-\theta_2}}{(2^m  c_12^{-2m} N^{2\theta_1-\theta_2})^{\frac{3}{2}}} 
    \le c_7 \frac{N^\beta}{\sqrt{2^m  c_12^{-2m} N^{2\theta_1-\theta_2}}} +  c_8 2^{-\frac{m}{2}} N^{\frac{\theta_2}{2}} \\
    & \le c_9 2^{-\frac{m}{2}}  (N^{\beta-\theta_1+\frac{\theta_2}{2}} + N^{\frac{\theta_2}{2}})
    \le c_{10}  N^{\frac{\theta_2+\log_{\lambda}2}{2}}
    \le  c_{10}  N^{\frac{\theta_2+\log_{\frac{1}{2}-\delta}2}{2}} =: c_{10} N^{-\tau}.
\end{align*}

Here in the second inequality we used Lemmas \ref{Lemma: Estimate of Variance} and \ref{Lemma: Estimate of Third Moment}. In the third inequality we used that $N(0,1)$ has continuous density near $0$, and $\DS \frac{N^\beta}{\sigma}\approx N^{\beta-\theta_1+\frac{\theta_2}{2} + \frac{\log_{\lambda} 2}{2}}  \ll 1 $, as $\beta<\theta_1, \theta_2+\log_{\frac{1}{2}-\delta}2 <0$. We also used the fact that $\lambda^m\approx \frac{1}{N}$ and $n_0$ is an absolute constant depending only on $\delta$.

Thus \eqref{Equation: Borel-Contelli reductio 1} holds as long as
$\DS \sum_k N_k^{-\tau}<\infty,$
which is the desired conclusion.  
\end{proof}

It remains to prove Lemmas \ref{Lemma: Estimate of Variance} and \ref{Lemma: Estimate of Third Moment}.

\begin{proof}[Proof of Lemma \ref{Lemma: Estimate of Variance}] Note that by the choice of $m$, \;\;
$\DS |C_l| = \frac{2\lambda^m}{1-\lambda}
\in\left[\frac{2}{N} \frac{\lambda}{1-2\lambda}, \frac{2}{N} \frac{1}{1-2\lambda}  \right].$
Since $\lambda\in(\delta,\frac{1}{2}-\delta)$, the number of sub-intervals 
$\DS I^N_j = \left[\frac{2(j-1)}{N}, \frac{2j}{N}\right] , j=1,\cdots,N$ that intersect $C_l$ is uniformly bounded from above by $L:=[\frac{1}{2\delta}]+1$. 

Define $Q_j^N:= [\frac{2(j-1)}{N} + \frac{1-\rho}{N} , \frac{2j}{N} - \frac{1-\rho}{N}]\subset I_j^N, j=1,2,\cdots,N$. Note that on these intervals, $\phi_N'$ has probability $2N^{-\theta_2}$ to have absolute value $N^{\theta_1}$. 
Also by the definition of $n_0$, on $C_l,l = 1,2,\cdots,2^{m-n_0}$ it holds that

$$X(\lambda;a) = \sum_{n=1}^\infty a_n \lambda^{n-1}\ge \sum_{n=1}^{n_0} \lambda^{n-1} -\sum_{n=n_0+1}^\infty \lambda^{n-1}>\frac{1}{1-\delta} - 2\sum_{n\ge n_0} (\frac{1}{2}-\delta)^n>\frac{1}{2(1-\delta)},$$
and 
$$X^{(1)}(\lambda;a) = \sum_{n=1}^\infty na_{n+1} \lambda^{n-1}\ge \sum_{n=1}^{n_0} n\lambda^{n-1} -\sum_{n=n_0+1}^\infty n\lambda^{n-1}>1 - \sum_{n\ge n_0+1} n(\frac{1}{2}-\delta)^{n-1}>\frac{1}{2}.$$

For each fixed $l\in[1,2^{m-n_0}]\cap \mathbb{Z}$, we wish to show that the $\mu_\lambda$-measure of $\DS  C_l\bigcap \left(\bigcup_jQ_j^N\right)$ is at least half of $\mu_\lambda(C_l)$, so that this region will contribute largeness to $\mathbb{E}[D_l^2]$.

To this end we  prove that the complement is less than half. Let $\rho' = \rho'(\delta)>0$ be such that Lemma \ref{Lemma: mulambda measure of certain certain sets cannot be small} works with $\delta$ and $\rho'$. A direct computation yields that if $\rho = \rho(\delta)$ is chosen sufficiently close to $1$,\footnote{One can choose $\rho =1- \frac{1}{100} \frac{\rho' \delta}{2-\rho'}$ where $\rho' = \frac{1}{2}(\frac{1}{2\delta}+1) \delta^{\log_2(\frac{2}{\delta}+4)+1}$ \label{footnote: Choice of rho}.}, then $C_l \backslash \cup_j Q_j^N$ is a union of at most $L$ intervals, and the total length of these intervals is 
at most
$\DS \frac{1-\rho}{1} \times (|C_l| - \frac{2(1-\rho)}{N}) + \frac{2(1-\rho)}{N}\le \rho' |C_l|.$
 Thus by Lemma \ref{Lemma: mulambda measure of certain certain sets cannot be small}, 
\begin{equation}\label{Inequality: regions with large derivative have some measure}
    \mu_\lambda \left( C_l \bigcap \left(\bigcup_jQ_j^N\right)\right) \ge \frac{1}{2} \mu_\lambda(C_l) =\frac{1}{2} 2^{-m}. 
\end{equation}

Now for each $l\in\{2^{m-1}+1,\cdots,2^m\}$, we  compute a lower bound for $\mathbb{E}[D_l^2]$. Suppose that  $C_l$ intersects $I^N_{j_1},\cdots, I^N_{j_r}$ with $j_r\!\!\le\!\!L$. By the pigeonhole principal and 
\eqref{Inequality: regions with large derivative have some measure}, there exists one interval, say $I^N_{j_1}$, such that
$\DS \mu_\lambda ( I^N_{j_1}\bigcap C_l \bigcap \cup_jQ_j^N) \!\!\ge\!\! \frac{2^{-m}}{L}\frac{1}{2} \!\!=:\!\!2^{-m}c_4.$
We can now finish the proof:
\begin{align}
  \nonumber  &\mathbb{E}[D^2_l ]  \ge  2N^{-\theta_2}(1-2N^{-\theta_2})^{j_r-1}|\int_{\{a\in \Sigma: X(\lambda;a)\in C_{l}\cap I^N_{j_1}\}} N^{\theta_1} g_\rho\left(NX(\lambda;a)-j_1+1 \right)  X^{(1)}(\lambda;a) \,d\nu(a)|^2 \\
\nonumber&\ge 2N^{\theta_2}(1-2N^{-\theta_2})^{L-1}|\int_{\{a\in \Sigma: X(\lambda;a)\in C_{l}\cap I^N_{j_1} \cap (\cup_j Q^N_j)\}} N^{\theta_1} g_\rho\left(NX(\lambda;a)-j_1+1 \right)  X^{(1)}(\lambda;a) \,d\nu(a)|^2\\
\nonumber&> N^{2\theta_1-\theta_2} | \int_{\{a\in \Sigma: X(\lambda;a)\in C_{l}\cap I^N_{j_1} \cap (\cup_j Q^N_j)\}}  X^{(1)}(\lambda;a)\,d\nu(a)|^2\\
\nonumber&>\frac{1}{4} N^{2\theta_1-\theta_2}  \mu_\lambda ( I^N_{j_1}\bigcap C_l \bigcap \cup_jQ_j^N)^2
= c_5 2^{-2m}  N^{2\theta_1-\theta_2}.
\end{align}
Here $c_5$ depends only on $\delta$. To get the first inequality, we bound the expectation by considering the case when $\phi_N'$ is equal to $\DS N^{\theta_1} g_\rho\left(NX(\lambda;a)-j_1+1 \right)$ on the interval $I^N_{j_1}$ and is equal to $0$ on the rest $I^N_{j_t},1<t\le r$. The second inequality follows because both $g_\rho$ and 
$X^{(1))}$ are non-negative on $C_l$. In the penultimate inequality, we used the fact that 
$ X^{(1)}\ge \frac{1}{2}$ on $C_l$.
\end{proof}

\begin{proof}[Proof of Lemma \ref{Lemma: Estimate of Third Moment}]

By the same argument as above, each $C_l$ intersects at most $L$ intervals among $\{I_j^N\}$. Call these intervals $I^N_{j_1},\cdots,I^N_{j_r}$. Then for each $l\!\in\! [2^{m-1}\!+\!1,2^m]\cap\mathbb{Z}$, 
\begin{align*}
    \mathbb{E}[|D_l|^3]&\le \sum_{t=1}^r \mathbb{E}[|\int_{\{a\in \Sigma: X(\lambda;a)\in C_{l}\cap I^N_{j_t}\}}\phi_N'\left(X(\lambda;a)\right)  X^{(1)}(\lambda;a) \,d\nu(a)|^3]\\
    &\le \sum_{t=1}^r 2N^{-\theta_2} |\int_{\{a\in \Sigma: X(\lambda;a)\in C_{l}\cap I^N_{j_t}\}} 10N^{\theta_1} \,d\nu(a)|^3\\
    &\le  2000N^{3\theta_1-\theta_2}( \sum_{t=1}^{j_r}\mu_\lambda(\{a\in \Sigma: X(\lambda;a)\in C_{l}\cap I^N_{j_t}\})  )^3
    = c_6 2^{-3m} N^{3\theta_1-\theta_2}
\end{align*}
where $c_6>0$ is an absolute constant. 
Here in the second inequality we used the fact that $0\le X^{(1)}(\lambda;a)\le 10$ for any $\lambda<\frac{1}{2}$ and $a\in\Sigma$. 
\end{proof}

\subsubsection{Small Remainder}\label{Subsubsection: Sufficient condition for item 2}

The purpose of this subsection is to justify property $(3)$ from Definition \ref{Definition of adapted pair}. More precisely, we will prove the following

\begin{prop}\label{Proposition: Conditions to guarantee item 3}
    Given any small $\delta>0$, $\alpha\in(0,1)$, $\beta>0$, $\theta_1,\theta_2>0$ and positive integers $N$, there exists $\epsilon_*>0$ such that the following is true. 
    
    Let $\phi_{N}$ be a generalized Brownian motions at scale $N$ with parameters $\theta_1,\theta_2,\rho(\delta)$. Then for any $\epsilon$ with $|\epsilon|\le\epsilon_*$, any $\lambda\in(\delta,\frac{1}{2}-\delta)$ and any outcome of $\phi_{N}$ satisfying $\DS |\int_\Sigma  \phi'_{N}(X(\lambda;a))X^{(1)}(\lambda;a) \,d\nu(a)| \ge N^\beta $, it holds that  

$$h_{N}(\lambda+\epsilon) - h_{N}(\lambda) = \epsilon \int_\Sigma  \phi'_{N}(X(\lambda;a))X^{(1)}(\lambda;a) \,d\nu(a) +R_N(\lambda;\epsilon)$$
with
$$\left|\frac{R_N(\lambda;\epsilon)}{\epsilon}\right|\le \frac{1}{100}\left|\int_\Sigma  \phi'_{N_n}(X(\lambda;a))X^{(1)}(\lambda;a) \,d\nu(a)\right| .$$
Here $\DS h_N(\lambda):= \int\phi_{N}(x)\,d\mu_\lambda(x).$

\end{prop}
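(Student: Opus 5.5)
The plan is to use a second-order Taylor expansion of $h_N$ in $\lambda$, with a bound on $h_N''$ that is uniform over all outcomes of $\phi_N$ and all $\lambda\in(\delta,\frac12-\delta)$. For a fixed $N$, every outcome of $\phi_N$ is a $C^2$ function, because $\phi_N'=g_{N,\rho}$ is $C^2$. The key point is that $\phi_N$ ranges over only finitely many outcomes: on each $I_j^{(N)}$ there are three choices, so $3^N$ outcomes in total. Alternatively, one can simply note that the bounds $|\phi_N'|\le N^{\theta_1}$ and $|\phi_N''|\le N^{\theta_1+1}\|g_\rho'\|_{C^0}$ hold uniformly over outcomes.

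First, Lemma \ref{Lemma: Linear response for C2 Observables} applies to $\phi_N$ on a neighborhood of $[\delta,\frac12-\delta]$, say for $\lambda\in(\delta/2,\frac12-\delta/2)$. It gives
$$h_N''(\lambda)=\int_\Sigma \phi_N''(X(\lambda;a))\bigl(X^{(1)}(\lambda;a)\bigr)^2+\phi_N'(X(\lambda;a))X^{(2)}(\lambda;a)\,d\nu(a).$$
For $\lambda\le\frac12$, both $X^{(1)}$ and $X^{(2)}$ are bounded by absolute constants (e.g.\ $\sum_m m(m-1)2^{-(m-2)}<\infty$). Hence
$$|h_N''(\lambda)|\le M_N:=C\bigl(N^{\theta_1+1}\|g_\rho'\|_{C^0}+N^{\theta_1}\bigr),$$
where $C$ is absolute. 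Since $g_\rho$ is fixed once $\delta$ is fixed, $M_N$ depends only on $N,\theta_1,\delta$ and not on the outcome.

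Next, by Taylor's theorem with Lagrange remainder, for $|\epsilon|\le\delta/2$ we have
$$h_N(\lambda+\epsilon)-h_N(\lambda)=\epsilon h_N'(\lambda)+R_N(\lambda;\epsilon),\qquad |R_N(\lambda;\epsilon)|\le \tfrac12 M_N\epsilon^2.$$
By Lemma \ref{Lemma: Linear Response for C1 observables}, $h_N'(\lambda)=\int_\Sigma\phi_N'(X(\lambda;a))X^{(1)}(\lambda;a)\,d\nu(a)$. Therefore $|R_N/\epsilon|\le \frac12 M_N|\epsilon|$. Choose
$$\epsilon_*:=\min\Bigl\{\tfrac{\delta}{2},\ \tfrac{2N^\beta}{100M_N}\Bigr\}.$$
For any outcome satisfying $|h_N'(\lambda)|\ge N^\beta$, we then get $|R_N/\epsilon|\le \frac{1}{100}N^\beta\le\frac1{100}|h_N'(\lambda)|$, which is the claim.

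The main point to check carefully is the uniformity of the $h_N''$ bound: that Lemma \ref{Lemma: Linear response for C2 Observables} holds on the whole $\epsilon$-neighborhood, and that $\phi_N$ is genuinely $C^2$ at the junctions $x=0,2$ and between subintervals. The latter holds because $g_\rho$ vanishes near the endpoints of $[0,2]$ by its support condition. The rest is routine.
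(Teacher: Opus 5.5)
Your proposal is correct and follows the paper's proof: a second-order Taylor expansion of $h_N$ with Lagrange remainder, the formula for $h_N''$ from Lemma \ref{Lemma: Linear response for C2 Observables}, an outcome-independent bound on $\phi_N''$ of order $N^{\theta_1+1}$ (the paper uses $\|\phi_N\|_{C^2}\le N^{2+\theta_1}\|g_\rho\|_{C^2}$) together with absolute bounds on $X^{(1)},X^{(2)}$, and then $\epsilon_*$ of order $N^{\beta}/M_N$. Your extra restriction $|\epsilon|\le\delta/2$ keeps $\lambda+\epsilon'$ in a region where $X^{(1)},X^{(2)}$ stay uniformly bounded, which makes explicit a point the paper leaves implicit.
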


Clearly one can take the $N_n,\epsilon_n$ in Definition \ref{Definition of adapted pair}(3) to be  the $N,\epsilon_*$ in the above proposition. In the rest of this subsection we prove Proposition \ref{Proposition: Conditions to guarantee item 3}.

\begin{proof}
       By Lemmas \ref{Lemma: Linear Response for C1 observables}, \ref{Lemma: Linear response for C2 Observables} and Taylor's theorem, 
$$  h_{N}(\lambda+\epsilon) - h_{N}(\lambda) = \epsilon \int_\Sigma  \phi'_{N}(X(\lambda;a))X^{(1)}(\lambda;a) \,d\nu(a) +R_N(\lambda;\epsilon) $$
where
\begin{align*}
    R_N(\lambda;\epsilon) &= \frac{1}{2}\epsilon^2 h''_{\phi_N}(\lambda+\epsilon') \quad \text{ for some }\epsilon' = \epsilon'(\lambda,\epsilon)\in(0,\epsilon)\\
    &= \frac{1}{2}\epsilon^2 \int_\Sigma  \phi_{N}''(X(\lambda+\epsilon';a)) \left(X^{(1)}(\lambda+\epsilon';a)\right)^2 + \phi_{N}'(X(\lambda+\epsilon';a))X^{(2)}(\lambda+\epsilon';a) \,d\nu(a). 
\end{align*}
Recall that here, $\DS X^{(2)}(\lambda;a):= \sum_{m\ge 2} m(m-1) a_{m+1}\lambda^{m-2}$ is the formal derivative $\frac{\partial^2 X(\lambda;a)}{\partial\lambda^2}$.

Given $C^2$ function $f$ define $\DS ||f||_{C^2}:= \sup_{x\in\mathbb{R}}|f(x)|+\sup_{x\in\mathbb{R}}|f'(x)|+\sup_{x\in\mathbb{R}}|f''(x)|$. 
Then it is easy to see from the definition of $\phi_{N_n}$ that

$$||\phi_{N}||_{C^2} \le N^{2+\theta_1} ||g_\rho||_{C^2}$$
and the right hand side is a constant independent of $\epsilon$. Here it is important that we have a fixed $\rho = \rho(\delta)$. Moreover, it is not hard to see that both $|X^{(1)}|,|X^{(2)}|$ are no greater than $10$ for any $\lambda\in(\delta,\frac{1}{2}-\delta)$ and $a\in\Sigma$. It follows that
$$|R_N(\lambda;\epsilon)| \le \frac{1}{2}\epsilon^2 10 N^{2+\theta_1} ||g_\rho||_{C^2} = 5N^{2+\theta_1} ||g_\rho||_{C^2} \epsilon^2.$$

Therefore, as long as $\DS |\epsilon_*|\le \frac{1}{500} N^{\beta-2-\theta_1}||g_\rho||^{-1}_{C^2}$, 

$$\left|\frac{R_N(\lambda;\epsilon)}{\epsilon}\right| \le 
5N^{2+\theta_1} ||g_\rho|||\epsilon| \le \frac{N^\beta}{100} \le \frac{1}{100}\left|\int_\Sigma  \phi'_{N}(X(\lambda;a))X^{(1)}(\lambda;a) \,d\nu(a)\right|$$
as desired.
\end{proof}

\subsubsection{Completing the Construction }\label{Subsubsection: Completing the construction}

In this subsection we prove existence of adapted pair, which readily yields Theorem \ref{Theorem: Almost Everywhere Non-differentiability}.

\begin{prop}\label{Proposition: Existence of Adapted Pair}
    Fix any $\delta>0$ small enough, $\alpha\in(0,1)$, there exists $\theta_1,\theta_2,\beta>0$ such that there exists an adapted pair $(\{N_k\},\{\epsilon_k\})$ to $(\delta,\alpha,\theta_1,\theta_2,\beta)$.
\end{prop}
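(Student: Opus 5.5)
First fix the exponents in terms of $\delta$ and $\alpha$ so that the conditions listed after Propositions \ref{Proposition: Holder norm is small with probability 1} and \ref{Proposition: Derivative is large with Probability 1} hold. Since $\delta<\frac14$, we have $\log_{\frac12-\delta}2<-1$, so any $\theta_2\in(0,1)$ gives $-\tau:=\theta_2+\log_{\frac12-\delta}2<0$; take for instance $\theta_2=\frac12$. Then choose $\theta_1>0$ small, say $\theta_1=\frac{1-\alpha}{8}$, and $\beta=\theta_1/2$, so that $\beta<\theta_1$. This also gives
\[
2u=1-2\theta_1-2\tfrac{\alpha\theta_1+\beta}{1-\alpha}\ge 1-\tfrac14-\tfrac{2(\alpha+1)}{8}\cdot\tfrac{1}{1}\cdot\tfrac{1}{\,\cdot\,}>0 .
\]
Concretely, $\frac{\alpha\theta_1+\beta}{1-\alpha}\le\frac{2\theta_1}{1-\alpha}=\frac14$, so $2u\ge 1-\frac14-\frac12>0$. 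With these choices fixed, conditions (1)–(4) of Definition \ref{Definition of adapted pair} become growth or smallness requirements on $\{N_k\}$ and $\{\epsilon_k\}$, and I construct these inductively in the order $N_1\to\epsilon_1\to N_2\to\epsilon_2\to\cdots$.

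For the inductive step, suppose $N_1,\dots,N_k$ and $\epsilon_1,\dots,\epsilon_k$ have been chosen. Pick $N_{k+1}>N_k$ large enough that the following hold.
\begin{itemize}
\item[(a)] $N_{k+1}^{-\tau}\le 2^{-(k+1)}$ and $\exp(-N_{k+1}^u)\le 2^{-(k+1)}$, which makes both series in the sufficiency conditions summable.
\item[(b)] $\sum_{l=1}^{k}N_l^{\theta_1}<N_{k+1}^{\beta/2}$, which is item (2).
\item[(c)] $N_{k+1}^{-\beta}\le 2^{-(k+1)}\cdot\frac{\min_{j\le k}\epsilon_j}{100}$, which is aimed at item (4).
\item[(d)] $N_{k+1}$ exceeds the threshold beyond which the "sufficiently large $N$" steps in the proofs of Propositions \ref{Proposition: Holder norm is small with probability 1} and \ref{Proposition: Derivative is large with Probability 1} apply (e.g.\ $m>n_0$, $N\exp(-N^{2u}/8)\le\exp(-N^u)$). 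These thresholds depend only on $\delta,\alpha,\theta_i,\beta$, so only finitely many initial terms are affected, and this does not change almost-sure statements anyway.
\end{itemize}
Then let $\epsilon_{k+1}$ be the $\epsilon_*$ given by Proposition \ref{Proposition: Conditions to guarantee item 3} for $N=N_{k+1}$, further shrunk so that $\epsilon_{k+1}<\epsilon_k/2$. This makes $\{\epsilon_k\}$ decrease to $0$ and gives item (3) directly. Note that $\epsilon_*$ does not depend on the outcome of the random function or on $\lambda$, since the bound $\epsilon_*\le\frac1{500}N^{\beta-2-\theta_1}\|g_\rho\|_{C^2}^{-1}$ is uniform.

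It remains to verify item (4): $\sum_{l\ge n+1}N_l^{-\beta}\le\frac{\epsilon_n}{100}$. By (c), for $l\ge n+1$ we have $N_l^{-\beta}\le 2^{-l}\epsilon_n/100$, because $\epsilon_n$ is one of the $\epsilon_j$ with $j\le l-1$ appearing in the minimum. Summing gives at most $\epsilon_n/100$. Item (1) follows by combining Propositions \ref{Proposition: Holder norm is small with probability 1} and \ref{Proposition: Derivative is large with Probability 1}: the two events each have probability one for every fixed $\lambda\in(\delta,\frac12-\delta)$, hence so does their intersection, and the $k_0$ can be taken as the maximum of the two.

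The main point requiring care is the ordering. Item (4) bounds later $N_l$ in terms of earlier $\epsilon_n$, while item (3) forces $\epsilon_n$ to depend on $N_n$. This is why the interleaved induction is needed, and why (c) refers only to already-chosen $\epsilon_j$. Beyond this, I need to check that every constant in Propositions \ref{Proposition: Holder norm is small with probability 1}, \ref{Proposition: Derivative is large with Probability 1} and \ref{Proposition: Conditions to guarantee item 3} is uniform in $\lambda$ and in the outcome. This is where I would spend most effort; it holds because the constants depend only on $\delta$ through $L$, $n_0$, $\rho(\delta)$ and $\|g_\rho\|_{C^2}$.
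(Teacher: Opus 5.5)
Your interleaved construction and your bookkeeping for items (2)--(4) of Definition \ref{Definition of adapted pair} are fine and agree with the paper's proof. The paper gets item (4) from $N_k>2N_{k-1}$ and a geometric series rather than from your factors $2^{-(k+1)}$. One small error: for $\delta\in(0,\frac14)$ we have $\frac12-\delta\in(\frac14,\frac12)$, so $\log_{\frac12-\delta}2\in(-1,-\frac12)$, not $<-1$. Hence ``any $\theta_2\in(0,1)$'' is false (take $\delta=0.1$, $\theta_2=0.9$), although $\theta_2=\frac12$ does meet the stated condition.

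The genuine gap is item (1). With your exponents it is false, so it cannot follow from Propositions \ref{Proposition: Holder norm is small with probability 1} and \ref{Proposition: Derivative is large with Probability 1}.
\begin{itemize}
\item Write $S_N(\lambda)=\int_\Sigma\phi_N'(X(\lambda;a))X^{(1)}(\lambda;a)\,d\nu(a)=\sum_j\xi_jc_j$. Here the $\xi_j\in\{\pm N^{\theta_1},0\}$ are independent with $\mathbb{P}(\xi_j\neq0)=2N^{-\theta_2}$, and $|c_j|\le 4\|g_\rho\|_\infty\,\mu_\lambda(I^{(N)}_j)$.
\item With $m$ as in that proof, an interval of length $2/N$ meets at most two level-$m$ cylinders. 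So $\mu_\lambda(I^{(N)}_j)\lesssim 2^{-m}\asymp N^{-s}$, where $s=s(\lambda)=\log 2/\log(1/\lambda)$.
\item Hence $\operatorname{Var}S_N\lesssim N^{2\theta_1-\theta_2-s}$, and Chebyshev gives $\mathbb{P}(|S_N|\ge N^\beta)\lesssim N^{2\theta_1-\theta_2-s-2\beta}$.
\item For $\theta_1=\frac{1-\alpha}{8}$, $\beta=\frac{\theta_1}{2}$, $\theta_2=\frac12$ this exponent is $\frac{1-\alpha}{8}-\frac12-s<0$. So for every $\lambda$ the event in (1) has probability $0$.
\end{itemize}
The fault lies in the proof of Proposition \ref{Proposition: Derivative is large with Probability 1}. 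There $\sigma^2\asymp 2^{-m}N^{2\theta_1-\theta_2}$, so $N^\beta/\sigma\asymp 2^{m/2}N^{\beta-\theta_1+\theta_2/2}$, not $2^{-m/2}(\cdots)$. Consequently a hypothesis $2\theta_1>2\beta+\theta_2+s(\lambda)$ is missing. Also, $\lambda\mapsto\log_\lambda 2$ is decreasing, so the third-moment term $N^{(\theta_2-s(\lambda))/2}$ is worst at $\lambda=\delta$. It therefore needs $\theta_2<\log2/\log(1/\delta)<\frac12$, which your $\theta_2=\frac12$ violates. The paper's proof of this statement has the same gap, since it too only checks the listed exponent conditions.

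This cannot be repaired by a cleverer choice of exponents for all $\alpha$.
\begin{itemize}
\item The missing condition together with the Hölder condition $\theta_1+\beta<\frac{1-\alpha}{2}$ is satisfiable only when $\alpha<1-s(\frac12-\delta)$. In that case take $\theta_2,\beta$ tiny and $\theta_1$ just below $\frac{1-\alpha}{2}$.
\item Any adapted pair at all forces $\theta_1+\beta\le 1-\alpha$. If no $I^{(N)}_j$ is active then $S_N=0$. Across an active one, $\phi_N$ changes by $\asymp N^{\theta_1-1}$ over length $2/N$, so $\|\phi_N\|_{C^\alpha}\gtrsim N^{\theta_1-1+\alpha}$.
\item Combined with the Chebyshev bound, this forces $s(\lambda)<2(1-\alpha)$ for all $\lambda\in(\delta,\frac12-\delta)$.
\item Since $s(\frac12-\delta)>\frac12$, no adapted pair exists once $\alpha\ge\frac34$, whatever $\theta_1,\theta_2,\beta>0$ are. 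For small $\delta$ the threshold is close to $\frac12$.
\end{itemize}
So the statement needs a restriction on $\alpha$ in terms of $\delta$. Your argument would need both the corrected exponent conditions and such a restriction.
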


\begin{proof}
    We pick $\theta_1,\theta_2,\beta>0$ such that
    \begin{itemize}
    \item $\DS 2u:= 1-2\theta_1-2\frac{\alpha\theta_1+\beta}{1-\alpha}>0$,
    \item  $\DS \beta<\theta_1$ and $\DS -\tau:=\theta_2 + \log_{\frac{1}{2}-\delta} 2<0$.
\end{itemize}

  We start by taking an arbitrary $N_1=10$, and let $\epsilon_1$ be such that Proposition \ref{Proposition: Conditions to guarantee item 3} holds with $N=N_1$, $\epsilon_* = \epsilon_1$. 

  Next we assume that for some $k\ge 2$, $N_1,\cdots,N_{k-1} , \epsilon_1,\cdots,\epsilon_{k-1}$ are already chosen, and we proceed to choose $N_{k}$ and $\epsilon_{k}$. 

  First we pick any $N_k$ satisfying $\DS \sum_{l=1}^{k-1} N_l^{\theta_1} < N_k^{\frac{\beta}{2}}$, $\DS N_k > 2N_{k-1}$ and $\DS \frac{1}{1-2^{-\beta}} N_k^{-\beta}< \frac{\epsilon_{k-1}}{100}$. This can be done as long as $N_k$ is sufficiently large.

  Then we pick $\epsilon_k$ such that Proposition \ref{Proposition: Conditions to guarantee item 3} holds with $N=N_k$ and $\epsilon_*=\epsilon_k$. 

By this procedure we obtain two sequences $\{N_k\},\{\epsilon_k\}$, which we claim to be the adapted pair to $(\delta,\alpha,\theta_1,\theta_2,\beta)$. To this end we need to show that the properties of Definition \ref{Definition of adapted pair} are satisfied.

Item $(1)$ is since conditions  of \S \ref{Necessary conditions for item 1} are satisfied. In particular, 
$\DS \sum_k \exp\{-N_k^u\}<\infty$ and $\DS \sum_k N_k^{-\tau}<\infty$  by construction 
since $N_1=10,N_k> 2N_{k-1},\forall k\ge 2$.

Items $(2)$ and $(3)$ hold simply due to construction of $N_k,\epsilon_k$ at each step.

Finally, item $(4)$ holds since for any integer $n\ge 1$, 

$\DS  \sum_{l=n+1}^\infty N_l^{-\beta} \le N_{n+1}^{-\beta} (\sum_{m=0}^\infty 2^{-\beta m}) =\frac{1}{1-2^{-\beta}} N_{n+1}^{-\beta}< \frac{\epsilon_{n}}{100}.$
\end{proof}

\printbibliography


\end{document}